\begin{document}
\newtheorem{theorem}{Theorem}
\newtheorem{proposition}[theorem]{Proposition}
\newtheorem{conjecture}[theorem]{Conjecture}
\def\theconjecture{\unskip}
\newtheorem{corollary}[theorem]{Corollary}
\newtheorem{lemma}[theorem]{Lemma}
\newtheorem{claim}[theorem]{Claim}
\newtheorem{sublemma}[theorem]{Sublemma}
\newtheorem{observation}[theorem]{Observation}
\theoremstyle{definition}
\newtheorem{definition}{Definition}
\newtheorem{notation}[definition]{Notation}
\newtheorem{remark}[definition]{Remark}
\newtheorem{question}[definition]{Question}
\newtheorem{questions}[definition]{Questions}
\newtheorem{example}[definition]{Example}
\newtheorem{problem}[definition]{Problem}
\newtheorem{exercise}[definition]{Exercise}

\numberwithin{theorem}{section}
\numberwithin{definition}{section}
\numberwithin{equation}{section}

\def\earrow{{\mathbf e}}
\def\rarrow{{\mathbf r}}
\def\uarrow{{\mathbf u}}
\def\varrow{{\mathbf V}}
\def\tpar{T_{\rm par}}
\def\apar{A_{\rm par}}

\def\reals{{\mathbb R}}
\def\torus{{\mathbb T}}
\def\heis{{\mathbb H}}
\def\integers{{\mathbb Z}}
\def\naturals{{\mathbb N}}
\def\complex{{\mathbb C}\/}
\def\distance{\operatorname{distance}\,}
\def\support{\operatorname{support}\,}
\def\dist{\operatorname{dist}\,}
\def\Span{\operatorname{span}\,}
\def\degree{\operatorname{degree}\,}
\def\kernel{\operatorname{kernel}\,}
\def\dim{\operatorname{dim}\,}
\def\codim{\operatorname{codim}}
\def\trace{\operatorname{trace\,}}
\def\Span{\operatorname{span}\,}
\def\dimension{\operatorname{dimension}\,}
\def\codimension{\operatorname{codimension}\,}
\def\nullspace{\scriptk}
\def\kernel{\operatorname{Ker}}
\def\ZZ{ {\mathbb Z} }
\def\p{\partial}
\def\rp{{ ^{-1} }}
\def\Re{\operatorname{Re\,} }
\def\Im{\operatorname{Im\,} }
\def\ov{\overline}
\def\eps{\varepsilon}
\def\lt{L^2}
\def\diver{\operatorname{div}}
\def\curl{\operatorname{curl}}
\def\etta{\eta}
\newcommand{\norm}[1]{ \|  #1 \|}
\def\expect{\mathbb E}
\def\bull{$\bullet$\ }

\def\blue{\color{blue}}
\def\red{\color{red}}

\def\xone{x_1}
\def\xtwo{x_2}
\def\xq{x_2+x_1^2}
\newcommand{\abr}[1]{ \langle  #1 \rangle}

\newcommand{\Norm}[1]{ \left\|  #1 \right\| }
\newcommand{\set}[1]{ \left\{ #1 \right\} }
\def\one{\mathbf 1}
\def\whole{\mathbf V}
\newcommand{\modulo}[2]{[#1]_{#2}}
\def \essinf{\mathop{\rm essinf}}
\def\scriptf{{\mathcal F}}
\def\scriptg{{\mathcal G}}
\def\scriptm{{\mathcal M}}
\def\scriptb{{\mathcal B}}
\def\scriptc{{\mathcal C}}
\def\scriptt{{\mathcal T}}
\def\scripti{{\mathcal I}}
\def\scripte{{\mathcal E}}
\def\scriptv{{\mathcal V}}
\def\scriptw{{\mathcal W}}
\def\scriptu{{\mathcal U}}
\def\scriptS{{\mathcal S}}
\def\scripta{{\mathcal A}}
\def\scriptr{{\mathcal R}}
\def\scripto{{\mathcal O}}
\def\scripth{{\mathcal H}}
\def\scriptd{{\mathcal D}}
\def\scriptl{{\mathcal L}}
\def\scriptn{{\mathcal N}}
\def\scriptp{{\mathcal P}}
\def\scriptk{{\mathcal K}}
\def\frakv{{\mathfrak V}}
\def\C{\mathbb{C}}
\def\D{\mathcal{D}}
\def\R{\mathbb{R}}
\def\Rn{{\mathbb{R}^n}}
\def\Sn{{{S}^{n-1}}}
\def\M{\mathbb{M}}
\def\N{\mathbb{N}}
\def\Q{{\mathbb{Q}}}
\def\Z{\mathbb{Z}}
\def\F{\mathcal{F}}
\def\L{\mathcal{L}}
\def\S{\mathcal{S}}
\def\supp{\operatorname{supp}}
\def\dist{\operatorname{dist}}
\def\essi{\operatornamewithlimits{ess\,inf}}
\def\esss{\operatornamewithlimits{ess\,sup}}
\begin{comment}
\def\scriptx{{\mathcal X}}
\def\scriptj{{\mathcal J}}
\def\scriptr{{\mathcal R}}
\def\scriptS{{\mathcal S}}
\def\scripta{{\mathcal A}}
\def\scriptk{{\mathcal K}}
\def\scriptp{{\mathcal P}}
\def\frakg{{\mathfrak g}}
\def\frakG{{\mathfrak G}}
\def\boldn{\mathbf N}
\end{comment}
\author{Mingming Cao}
\author{Qingying Xue}
\address{
         School of Mathematical Sciences \\
         Beijing Normal University \\
         Laboratory of Mathematics and Complex Systems \\
         Ministry of Education \\
         Beijing 100875 \\
         People's Republic of China}
\email{m.cao@mail.bnu.edu.cn}
\email{qyxue@bnu.edu.cn}

\thanks{The second author was supported partly by NSFC
(No. 11471041), the Fundamental Research Funds for the Central Universities (No. 2014KJJCA10) and NCET-13-0065. \\ \indent Corresponding
author: Qingying Xue\indent Email: qyxue@bnu.edu.cn}

\keywords{Weak $(1,1)$; Local $Tb$ theorem; Non-homogeneous; Littlewood-Paley $g_{\lambda}^*$-function.}

\date{May 16, 2016.}
\title[Littlewood-Paley operators]{$L^p$ boundedness of non-homogeneous Littlewood-Paley $g^*_{\lambda,\mu}$-function with non-doubling measures}
\maketitle
%%%%%%%%%%%%%%%%%%%%%%%%%%%%%%%%%%%%%%%%%%%%%%%%%%%%%%%%%%%%%%%%%%%%%%%
\begin{abstract}
It is well-known that the $L^p$ boundedness and weak $(1,1)$ estiamte $(\lambda>2)$ of the classical Littlewood-Paley $g_{\lambda}^{*}$-function were first studied by Stein, and the
weak $(p,p)$ $(p>1)$ estimate was later given by Fefferman for $\lambda=2/p$. In this paper, we investigated the $L^p(\mu)$ boundedness of the non-homogeneous Littlewood-Paley
$g_{\lambda,\mu}^{*}$-function with non-convolution type kernels and a power bounded measure $\mu$:
$$
g_{\lambda,\mu}^*(f)(x)
= \bigg(\iint_{\R^{n+1}_{+}} \Big(\frac{t}{t + |x - y|}\Big)^{m \lambda} |\theta_t^\mu f(y)|^2
\frac{d\mu(y) dt}{t^{m+1}}\bigg)^{1/2},\ x \in \Rn,\ \lambda > 1,
$$
where $\theta_t^\mu f(y) = \int_{\Rn} s_t(y,z) f(z) d\mu(z)$, and $s_t$ is a non-convolution type kernel.
Based on a big piece prior boundedness, we first gave a sufficient condition for the $L^p(\mu)$ boundedness of $g_{\lambda,\mu}^*$.
This was done by means of the non-homogeneous good lambda method.
Then, using the methods of dyadic analysis, we demonstrated a big piece global $Tb$ theorem.
Finally, we obtaind a sufficient and necessary condition for $L^p(\mu)$ boundedness of $g_{\lambda,\mu}^*$-function.
It is worth noting that our testing conditions are weak $(1,1)$ type with respect to measures.
\end{abstract}
%%%%%%%%%%%%%%%%%%%%%%%%%%%%%%%%%%%%%%%%%%%%%%%%%%%%%%%%%%%%%%%%%%%%%%%%%%%%%%%%%%%%%%%%%%%%%%%%%%%%%%%%%%%%%%%%%%%%%%%%%%%%%%
\section{Introduction}
Dyadic techniques play important roles in Harmonic analysis and other fields. They originated from martingale inequalities including dyadic maximal and square functions, which can be
considered as the particular Doob's maximal function and Burkholder's square function. In recent decades, dyadic analysis has attracted renewed attentions because of the study on Haar
shifts. In 2000,
Petermichl \cite{P2000} showed that the Hilbert transform can be expressed as an average of some dyadic shifts. Petermichl's shift operator relates the probabilistic-combinatorial
fields
with the analytic realms. In addition, it not only associates to discrete martingale transforms but also owns much simpler structure and stronger localization property, which lead to
delicate applications in dealing with certain combinatorial considerations and weighted norm estimates \cite{P2007}. In 2002, the authors \cite{PTV} showed that the %same
result in \cite{P2000} was also true for Riesz transforms in $\Rn$. They simplified the previous method and utilized the product Haar system over cubes in $\Rn$. Almost
simultaneously, by making use of the similar ideas as in \cite{P2000}, Petermichl and Volberg \cite{PV} obtained a sharp weighted estimate of the Ahlfors-Beurling operator, where the
corresponding dyadic model operator was martingale transform. In 2008, Petermichl \cite{P2008} got the sharp weighted bound for the Riesz transforms. This was mainly done by using a
modified product Haar system adapted to weighted situation and n-dimensional analog of the dyadic shift operators,

Later, in 2012, Hyt\"{o}nen, who made full use of the advantages of dyadic techniques, demonstrated the dyadic representation theorem \cite{H2012} and solved the celebrated $A_2$
conjecture. It provides a direct link between classical and dyadic analysis by showing that any Calder\'{o}n-Zygmund singular integral operator has a representation in terms of
certain simpler dyadic shift operators. Hyt\"{o}nen's representation theorem mainly concerns discretizing objects and therefore it reduces problems into a parallel dyadic world where
objects, statements and analysis are often easier. The dyadic approach has also been utilized to investigate $L^p$ boundedness of some operators. Essentially, in the study of $L^p$
boundedness using dyadic approach, the most important step is to seek simpler dyadic model operator to approximate the original operator. Although one hopes that the original results
can be recovered from the dyadic model operator, the transition was not always automatic. This requires the dyadic model operator to be proper enough.
Another remarkable work of Hyt\"{o}nen \cite{H2012} is that he improved the probabilistic methods and random dyadic grids, which were introduced by Nazarov, Treil and Volberg when
they studyied the Calder\'{o}n-Zygmund singular integrals on non-homogeneous spaces \cite{NTV2003}. Later on, many nice works have been done in succession by usign the dyadic
techniques and probabilistic methods, see \cite{HM2014}, \cite{LL}, \cite{M2012}, \cite{M2014}, \cite{MM-S} and \cite{Ou} for more details.

 On one hand, it was well known that the local Tb theorem for the standard Calder\'{o}n-Zygmund operator $T$ was first studied by Christ \cite{C}. He showed that $T$ is bounded on
 $L^2(X,\mu)$, whenever $X$ is a homogeneous space and $\mu$ is a doubling measure. After that, many good results have been achieved. Among these achievements are the celebrated works of Nazarov,
 Treil and Volberg \cite{NTV2002}, Auscher, Hofmann, Muscalu, Tao and Thiele \cite{AHMTT}, as well as Hofmann \cite{Hofmann}. Recently, Hyt\"{o}nen and Martikainen \cite{HM2012} obtained a local Tb theorem for the general
 Calder\'{o}n-Zygmund operator in nonhomogeneous space. Shortly afterwards, a local Tb theorem for standard Calder\'{o}n-Zygmund operator in nonhomogeneous space with
 non-scale-invariant $L^2$ testing condition was proved by Lacey and Martikainen \cite{LM-CZ}. They made full use of the techniques of nonhomogeneous and two-weight dyadic analysis.

On the other hand, as for the square functions, the nonhomogeneous local Tb theorems for Littlewood-Paley $g$-function were in turn presented in \cite{MMO}, \cite{LM-S} and
\cite{MM-Lq}. However, the testing conditions range from scale-invariant $L^\infty$ type to non-scale-invariant $L^2$ type, and then to non-scale-invariant $L^p(1<p<2)$ type. In the
latter case, the twisted martingale difference operators associated with stopping cubes were introduced to overcome the problems brought by the general testing functions and upper
doubling measures. Quite recently, Martikainen, Mourgoglou and Vuorinen \cite{MMV} studied the nonhomogeneous local Tb theorem for $g$-function with weak $(1,1)$ testing condition.
Not long ago, Cao, Li and Xue \cite{CLX} gave a characterization of two weight norm inequalities for the classical Littlewood-Paley $g_\lambda^*$ function of higher dimension. The
authors made use of the averaging identity over good Whitney regions, which made the proof quite brief. Still more recently, Cao and Xue \cite{CX} established a local $Tb$ theorem for
the non-homogeneous Littlewood-Paley $g_{\lambda}^{*}$-function with non-convolution type kernels and upper power bound measure $\mu$. Significantly, it was in local setting and with
$L^p$-type testing condition. The exponent $p \in (1,2)$ means that one had to search for some new approaches and more complicated techniques instead of the averaging identity over
good Whitney regions used in \cite{CLX}.

In this paper, taking into account the advantages of dyadic analysis, we are aiming to prove a certain local Tb theorem for the Littlewood-Paley $g_\lambda^*$ function in nonhomogeneous space by making use of dyadic techniques. We are particularly
interested in the weak $(1,1)$ testing condition. Before stating our main results, let us first introduce some definitions and notations.

Let $\mathfrak{M}(\Rn)$ be the space of all complex Borel measures in $\Rn$ equipped with the norm of total variation
$||\nu|| = |\nu|(\Rn)$. We will use, for minor convenience, $\ell^\infty$ metrics on $\Rn$ in this paper. Given a complex measure $\nu$, we define the Littlewood-Paley
$g_\lambda^*$-function as follows
\begin{align*}
g_{\lambda}^*(\nu)(x)
= \bigg(\iint_{\R^{n+1}_{+}} \Big(\frac{t}{t + |x - y|}\Big)^{m \lambda} |\theta_t \nu(y)|^2
\frac{d\mu(y) dt}{t^{m+1}}\bigg)^{1/2},\ x \in \Rn,\ \lambda > 1,
\end{align*}
where $\theta_t \nu(y) = \int_{\Rn} s_t(y,z) d\nu(z)$. In particular, we denote
\begin{align}\label{sf}
g_{\lambda,\mu}^*(f)(x)
= \bigg(\iint_{\R^{n+1}_{+}} \Big(\frac{t}{t + |x - y|}\Big)^{m \lambda} |\theta_t^\mu f(y)|^2
\frac{d\mu(y) dt}{t^{m+1}}\bigg)^{1/2},\ x \in \Rn,\ \lambda > 1,
\end{align}
where $\theta_t^\mu f(y) =\theta_t (f \mu)(y) = \int_{\Rn} s_t(y,z) f(z) d\mu(z)$, and the non-convolution kernel $s_t$ satisfies the following Standard conditions:

\vspace{0.3cm}
\noindent\textbf{Standard conditions.} The kernel $s_t : \R^{n} \times \R^{n} \rightarrow \C$ is assumed to satisfy the following estimates: for some
$\alpha>0$
\begin{enumerate}
\item [(1)] Size condition :
$$ |s_t(x,y)| \lesssim \frac{t^{\alpha}}{(t + |x - y|)^{m+\alpha}}.$$
\item [(2)] H\"{o}lder condition :
$$ |s_t(x,y) - s_t(x,y')| \lesssim \frac{|y - y'|^{\alpha}}{(t + |x - y|)^{m+\alpha}},\ \text{whenever}\  |y - y'| < t/2 ;$$
%$$ |s_t(x,y) - s_t(x',y)| \lesssim \frac{|x - x'|^{\alpha}}{(t + |x - y|)^{m+\alpha}},\ \text{whenever}\  |x - x'| < t/2 .$$
\end{enumerate}
Moreover, the non-homogeneous measure $\mu$ is a Borel measure on $\Rn$ satisfying the $power \ bounded$ condition: for some $m>0$,
$$
\mu(B(x,r)) \lesssim r^m, \ \quad  x \in \Rn, \ r>0.
$$
Now, we introduce the notation $g_{\lambda,Q}^*(\nu)$, the localized version of the operators $g_{\lambda}^*$ and $g_{\lambda,\mu}^*$ as follows: For a given cube $Q$,
$g_{\lambda,Q}^*(\nu)$ is defined by
\begin{align*}
g_{\lambda,Q}^*(\nu)(x)
= \bigg(\int_{0}^{\ell(Q)}\int_{\Rn} \Big(\frac{t}{t + |x - y|}\Big)^{m \lambda} |\theta_t \nu(y)|^2
\frac{d\mu(y) dt}{t^{m+1}}\bigg)^{1/2},\ \lambda > 1,
\end{align*}
and
$g_{\lambda,\mu,Q}^*(f)(x)=g_{\lambda,Q}^*(f \mu)(x)$.

Before stating our main results, we first need to give one more definition.
\begin{definition}
\begin{enumerate}
\item [(1)] Given $a,b>1$, a cube $Q \subset \Rn$ is called $(a,b)$-doubling for a given measure $\mu$ if $\mu(a Q) \leq b \mu(Q)$.
\item [(2)] Given $\mathfrak{C}>0$ we say that a cube $Q \subset \Rn$ has $\mathfrak{C}$-small boundary with respect to the measure $\mu$ if
$$
\mu \big(\{x \in 2Q; \dist(x,\partial Q) \leq \xi \ell(Q) \}\big) \leq \mathfrak{C} \xi \mu(2Q), \quad \hbox{\ \ for\  every\ } \xi > 0.
$$
\end{enumerate}
\end{definition}
Based on a big piece prior boundedness, we first obtain a sufficient condition for the $L^p(\mu)$ boundedness of
$g_{\lambda,\mu}^*$-function.
%%%%%%%%%%%%%%%%%%%%%%%%%%%%%%%%%%%%%%%%%%%%%%%%%%%%%%%%%%%%%
\begin{theorem}\label{Theorem-L^p}
Let $\lambda > 2$, $0 < \alpha \leq m(\lambda-2)$ and $\mu$ be a power bounded measure. Let $\beta > 0$ and $\mathfrak{C}$ be a big enough number, depending only on the dimension $n$,
and $\theta \in (0,1)$. Suppose that for each $(2,\beta)$-doubling cube $Q$ with $\mathfrak{C}$-small boundary, there exists a subset $G_Q \subset Q$ such that $\mu(G_Q) \geq \theta
\mu(Q)$ and
$g_{\lambda}^* : \mathfrak{M}(\Rn) \rightarrow L^{1,\infty}(\mu \lfloor G_Q)$ is bounded with a uniform constant independent of $Q$. Then $g_{\lambda,\mu}^*$ is bounded on $L^p(\mu)$
for any $p \in (1,\infty)$ with a constant depending on $p$ and the preceding constants.
\end{theorem}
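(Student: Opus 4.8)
The plan is to prove Theorem~\ref{Theorem-L^p} by the non-homogeneous good-$\lambda$ method, using the big-piece weak $(1,1)$ hypothesis to produce a good-$\lambda$ inequality that compares $g_{\lambda,\mu}^{*}f$ with a non-homogeneous maximal function. Let $M_\mu f(x)=\sup_{Q\ni x}\mu(5Q)^{-1}\int_{Q}|f|\,d\mu$; by a Vitali covering argument together with the power bound on $\mu$ this operator is of weak type $(1,1)$ and bounded on $L^{p}(\mu)$ for every $p>1$, and on $\{M_\mu f\le\gamma s\}$ the power bound additionally gives $\int_{B(x,r)}|f|\,d\mu\lesssim\gamma s\, r^{m}$ for all $r>0$, which is the form in which I will use it. First I would make the standard reductions: treat $f$ bounded with compact support, and — to keep all quantities finite while running the good-$\lambda$ argument — replace $g_{\lambda,\mu}^{*}$ by the truncated operators $g_{\lambda,R,\mu}^{*}$ over a fixed huge $(2,\beta)$-doubling cube $R$ with $\mathfrak{C}$-small boundary, prove bounds uniform in $R$, and let $\ell(R)\to\infty$ at the end. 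I would also recall the standard abundance of \emph{admissible} cubes (i.e.\ $(2,\beta)$-doubling with $\mathfrak{C}$-small boundary) for power bounded $\mu$: $\mu$-a.e.\ point lies in admissible cubes of arbitrarily small side, so any bounded open set $U$ can be covered, up to a $\mu$-null set, by pairwise disjoint admissible cubes contained in $U$.

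The core is the good-$\lambda$ inequality: there is $\delta_0\in(0,1)$ so that for every $\epsilon>0$ there is $\gamma>0$ with
\[
\mu\big(\{g_{\lambda,\mu}^{*}f>(1+\delta_0)s\}\cap\{M_\mu f\le\gamma s\}\big)\le\epsilon\,\mu\big(\{g_{\lambda,\mu}^{*}f>s\}\big),\qquad s>0 .
\]
Fix $s$, set $\Omega_s=\{g_{\lambda,\mu}^{*}f>s\}$, and take an admissible Whitney-type covering $\{Q_i\}$ of (a slight enlargement of) $\Omega_s$ with $\sum_i\mu(Q_i)\lesssim\mu(\Omega_s)$ and with each $Q_i$ having a point $y_i\notin\Omega_s$ at distance $\lesssim\ell(Q_i)$; it suffices to bound $\mu(Q_i\cap\{g_{\lambda,\mu}^{*}f>(1+\delta_0)s\}\cap\{M_\mu f\le\gamma s\})$ by $\epsilon\,\mu(Q_i)$ for each $i$. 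Splitting $f=f\mathbf 1_{\kappa Q_i}+f\mathbf 1_{(\kappa Q_i)^{c}}$ with $\kappa$ a large fixed constant and separating $g_{\lambda,\mu}^{*}f(x)^{2}$ into the scales $t<A\ell(Q_i)$ and $t\ge A\ell(Q_i)$ ($A$ large), I would use: for the large scales, the Hölder smoothness of the weight $(\tfrac{t}{t+|x-y|})^{m\lambda}$ to compare with the value at $y_i$, giving a bound $(1+\tfrac{C}{A})\,g_{\lambda,\mu}^{*}f(y_i)^{2}\le(1+\tfrac{C}{A})s^{2}$; and for $t<A\ell(Q_i)$, the size and Hölder conditions on $s_t$ together with $\int_{B(x_i,r)}|f|\,d\mu\lesssim\gamma s\, r^{m}$ to show the far part $f\mathbf 1_{(\kappa Q_i)^{c}}$ contributes at most $C\kappa^{-2\alpha}(\gamma s)^{2}$. (The hypotheses $\lambda>2$ and $0<\alpha\le m(\lambda-2)$ enter precisely in making these $t$-integrals and the attendant annular sums converge.) Choosing $A$ large and $\gamma$ small, this reduces the matter to
\[
\mu\big(Q_i\cap\{g_{\lambda,\mu}^{*}(f\mathbf 1_{\kappa Q_i})>c\,s\}\big)\le\epsilon\,\mu(Q_i)
\]
for a constant $c=c(\delta_0)>0$, where $\|f\mathbf 1_{\kappa Q_i}\|_{L^{1}(\mu)}\lesssim\gamma s\,\mu(Q_i)$ by the doubling of $Q_i$.

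This reduced estimate is where the big-piece hypothesis is used, and obtaining it with $\epsilon$ arbitrarily small is the main obstacle. Applying the hypothesis to the measure $(f\mathbf 1_{\kappa Q_i})\mu$ on $G_{Q_i}$ gives only $\mu(G_{Q_i}\cap\{g_{\lambda,\mu}^{*}(f\mathbf 1_{\kappa Q_i})>c\,s\})\lesssim\tfrac{\gamma}{c}\mu(Q_i)$, which controls the bad set merely on the big piece, of measure $\ge\theta\mu(Q_i)$. I would iterate: cover $Q_i\setminus G_{Q_i}$ by disjoint admissible cubes $\{Q_{i,j}\}$ with $\sum_j\mu(Q_{i,j})\le(1+\eta)(1-\theta)\mu(Q_i)$, and on each $Q_{i,j}$ repeat the same localization — splitting off the portion of $f\mathbf 1_{\kappa Q_i}$ lying near $Q_{i,j}$, whose $L^{1}(\mu)$-mass is $\lesssim\gamma s\,\mu(Q_{i,j})$, and estimating the complementary portion directly — then apply the hypothesis on $G_{Q_{i,j}}$ and pass $\bigcup_j(Q_{i,j}\setminus G_{Q_{i,j}})$ to the next generation. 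The delicate point is that the "directly estimated" part at generation $\ell$ carries a factor $\log(\ell(\text{parent})/\ell(Q_{i,j}))$, so a given $\gamma$ allows only finitely many generations before this overwhelms the threshold $cs$; but after $N$ generations the uncontrolled remainder has $\mu$-measure $\le((1+\eta)(1-\theta))^{N}\mu(Q_i)$, and since smaller $\gamma$ both raises the permissible $N$ and (for $(1+\eta)(1-\theta)<1$) shrinks this remainder geometrically, choosing $\gamma$ small forces the total bad set below $\epsilon\mu(Q_i)$. This bookkeeping — the essentially loss-free admissible coverings together with the control of the accumulating logarithmic and maximal-function errors across generations — is the heart of the argument.

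Finally I would integrate the good-$\lambda$ inequality. Writing $\mu(\{g_{\lambda,\mu}^{*}f>(1+\delta_0)s\})\le\epsilon\,\mu(\{g_{\lambda,\mu}^{*}f>s\})+\mu(\{M_\mu f>\gamma s\})$ and inserting this into $\|g_{\lambda,\mu}^{*}f\|_{L^{p}(\mu)}^{p}=p\int_0^{\infty}s^{p-1}\mu(\{g_{\lambda,\mu}^{*}f>s\})\,ds$, for $\epsilon$ small in terms of $p$ and $\delta_0$ the $\epsilon$-term is absorbed by the left side — finite because of the truncation to $R$ — leaving $\|g_{\lambda,\mu}^{*}f\|_{L^{p}(\mu)}\lesssim_{p}\|M_\mu f\|_{L^{p}(\mu)}\lesssim_{p}\|f\|_{L^{p}(\mu)}$ for every $p\in(1,\infty)$. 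Letting $\ell(R)\to\infty$ and invoking monotone convergence completes the proof.
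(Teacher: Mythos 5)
Your overall strategy --- a good-$\lambda$ inequality obtained by a Whitney decomposition of $\Omega_s=\{g_{\lambda,\mu}^{*}f>s\}$ into admissible cubes, a near/far splitting of $f$ on each Whitney cube with the far part controlled via $M_\mu f$ and a comparison point $y_i\notin\Omega_s$, and the big-piece weak $(1,1)$ hypothesis applied to the near part --- is exactly the paper's (Lemma \ref{good-lambda} together with Lemma \ref{U(f)}, Lemma \ref{T(f)} and the Whitney Lemma \ref{Whitney decomposition}). However, you have set up the good-$\lambda$ inequality in a form that is stronger than what the hypothesis can deliver, and the device you introduce to bridge the gap does not work as described. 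You demand $\mu(\{g^{*}>(1+\delta_0)s\}\cap\{M_\mu f\le\gamma s\})\le\epsilon\,\mu(\Omega_s)$ with $\epsilon$ \emph{arbitrarily small}; since the hypothesis only controls $g^{*}$ on a piece $G_{Q_i}$ of measure $\ge\theta\mu(Q_i)$, you are forced to iterate over the uncontrolled remainders $Q_i\setminus G_{Q_i}$. In that iteration the ``directly estimated'' intermediate scales $\ell(Q_{i,j})\le t\le\ell(Q_i)$ contribute roughly $\gamma s\,\big(\log(\ell(Q_i)/\ell(Q_{i,j}))\big)^{1/2}$, and this loss is governed by the \emph{sidelength ratio}, not by the generation count: already at the first iteration the admissible cubes covering the measurable set $Q_i\setminus G_{Q_i}$ may be arbitrarily small relative to $Q_i$ (there is no nearby point outside $\Omega_s$ to compare with, since $Q_{i,j}\subset\Omega_s$), so no fixed $\gamma$ keeps this term below the threshold $cs$ uniformly over the covering. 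Your bookkeeping ``smaller $\gamma$ raises the permissible $N$'' misdiagnoses the obstruction and leaves a genuine gap.

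The fix is to abandon the strong form: the paper proves the \emph{weak} good-$\lambda$ inequality
$$
\mu\big(\{g_{\lambda,\mu,t_0}^{*}f>(1+\epsilon)\xi,\ M_\mu f\le\delta\xi\}\big)\ \le\ \Big(1-\tfrac{\theta}{16\rho_0}\Big)\,\mu\big(\{g_{\lambda,\mu,t_0}^{*}f>\xi\}\big),
$$
where the right-hand constant is a \emph{fixed} number strictly less than $1$ (coming from one application of the big-piece hypothesis on a Whitney subfamily carrying a $\tfrac{1}{8\rho_0}$-fraction of $\mu(\Omega_\xi)$), while the gain $(1+\epsilon)$ on the left is what is taken arbitrarily close to $1$, with $\delta=\delta(\epsilon)$. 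No iteration is needed. In the integration step one then chooses $\epsilon$ so that $(1+\epsilon)^p\big(1-\tfrac{\theta}{16\rho_0}\big)<1$ and absorbs, exactly as in your final paragraph; the a priori finiteness is secured by truncating in $t$ from below rather than by localizing to a large cube, but that difference is cosmetic. With this single change your first-generation analysis (which is correct and matches the paper's estimates of $\mathfrak{N}_1,\dots,\mathfrak{N}_4$) already completes the proof.
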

Another main result is the following big piece global $Tb$ theorem for $g_{\lambda}^*$-function.
%%%%%%%%%%%%%%%%%%%%%%%%%%%%%%%%%%%%%%%%%%%%%%%%%%%%%%%%%%%%%%
\begin{theorem}\label{Theorem-big}
Let $\lambda > 2$, $0 < \alpha \leq m(\lambda-2)$ and $Q \subset \Rn$ be a cube. Let $\sigma$ be a finite Borel measure in $\Rn$ with $\supp \sigma \subset Q$. Suppose that $b$ is a
function satisfying $||b||_{L^\infty(\sigma)} \leq C_b$. For every $w$, let $T_w$ be the union of the maximal dyadic cubes $R \in \D_w$ for which $|\langle b \rangle_{R}^\sigma| \leq
c_1$.
A measurable set $H \subset \Rn$ is assumed to enjoy the following properties:
\begin{enumerate}
\item [(a)] There exists a $\delta_0$, $0<\delta_0 < 1$, such that $\sigma(H \cup T_w) \leq \delta_0 \sigma(Q)$ for every $w$;
\item [(b)] Every ball $B_r$ of radius $r$ satisfying $\sigma(B_r) > C_0 r^m$ satisfies $B_r \subset H$;
\item [(c)]For some $s>0$, it holds that
$$
\sup_{\zeta > 0} \zeta^s \sigma \big(\{x \in Q \setminus H; g_{\lambda,\sigma,Q}^*(b)(x) > \zeta \}\big)
\leq c_2 \sigma(Q).
$$
\end{enumerate}
Then, there is a measurable set $G_Q$ satisfying $G_Q \subset Q \setminus H$ and the following properties:
\begin{enumerate}
\item [(i)] $\sigma(G_Q) \simeq \sigma(Q)$;
\item [(ii)] $|| \mathbf{1}_{G_Q} g_{\lambda,\sigma,Q}^*(f) ||_{L^2(\sigma)} \lesssim ||f||_{L^2(\sigma)}, \ \
             \text{\quad for every} \ f \in L^2(\sigma)$.
\end{enumerate}
\end{theorem}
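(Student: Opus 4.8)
\emph{Proof strategy.} Work at a fixed random dyadic grid $\D_w$ (with $Q$ a cube of the grid, up to harmless adjustments) and average over $w$ at the end. \textbf{Construction of $G_Q$.} Run a stopping time inside $Q$: let $\mathcal{S}_w$ be the family of maximal dyadic cubes $R\subsetneq Q$ for which either (i) $R\in T_w$, i.e.\ $|\langle b\rangle_R^\sigma|\le c_1$, or (ii) $\sigma(R)>\Lambda\,\ell(R)^m\,\sigma(Q)/\ell(Q)^m$ for a fixed large $\Lambda$; and put
\[
 G_Q:=Q\setminus\Big(H\cup\{x\in Q\setminus H:\ g_{\lambda,\sigma,Q}^*(b)(x)>\zeta\}\cup\!\!\bigcup_{R\in\mathcal{S}_w}\!\!R\Big)
\]
for a large constant $\zeta$ to be fixed. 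Hypothesis (a) bounds the $\sigma$-mass of $H$ together with the type-(i) cubes by $\delta_0\sigma(Q)$. By hypothesis (b), any cube $R$ triggering (ii) (with $\Lambda$ large) contains a ball $B$ with $\sigma(B)>C_0 r_B^m$, hence $B\subset H$; a bit more care shows the type-(ii) cubes add essentially nothing beyond $H$. Hypothesis (c) and Chebyshev give $\sigma(\{g_{\lambda,\sigma,Q}^*(b)>\zeta\}\cap(Q\setminus H))\le c_2\zeta^{-s}\sigma(Q)$, which is $<\tfrac12(1-\delta_0)\sigma(Q)$ once $\zeta$ is large. Hence $\sigma(G_Q)\gtrsim\sigma(Q)$ with $G_Q\subset Q\setminus H$, which is (i); the probabilistic bad-cube estimate shows the cubes that are bad in the random grid occupy only a small fixed fraction, so this survives averaging over $w$.

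\textbf{Reductions.} Two uses of $G_Q$ are decisive. First, since $G_Q\subset Q\setminus H$, property (b) forces $\sigma(B(y,r))\le C_0 r^m$ whenever $B(y,r)\cap G_Q\neq\emptyset$; decomposing $\{x\in G_Q\}$ into the annuli $2^{k-1}t\le t+|x-y|<2^k t$ and summing (the series converges since $\lambda>1$) gives the \emph{collapse estimate}
\[
 \int_{G_Q}\Big(\tfrac{t}{t+|x-y|}\Big)^{m\lambda}\,d\sigma(x)\ \lesssim\ t^m\qquad(y\in\Rn,\ t>0),
\]
which tames the long-range interactions in the square-function sum. Second, by construction $g_{\lambda,\sigma,Q}^*(b)\le\zeta$ pointwise on $G_Q$; this is the form in which (c) will be consumed. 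Now expand $f$ in the \emph{$b$-adapted (twisted) martingale differences} $\Delta_R^b$ attached to the stopping tree generated by $\mathcal{S}_w$ — legitimate precisely because along that tree $|\langle b\rangle_R^\sigma|>c_1$, so the relevant averages are invertible — so that $f=\sum_R\Delta_R^b f$ modulo a controlled top term, with $\sum_R\|\Delta_R^b f\|_{L^2(\sigma)}^2\lesssim\|f\|_{L^2(\sigma)}^2$. Inserting $\theta_t^\sigma f=\sum_R\theta_t^\sigma\Delta_R^b f$ into $g_{\lambda,\sigma,Q}^*(f)$ and expanding the square splits $\|\mathbf{1}_{G_Q}g_{\lambda,\sigma,Q}^*(f)\|_{L^2(\sigma)}^2$ into an \emph{error part} (the genuinely cancelling pieces) and a \emph{paraproduct part} (the pieces of $\Delta_R^b f$ proportional to $b$).

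\textbf{Estimating the two parts, and averaging.} For the error part, the size and H\"older conditions on $s_t$ give the standard decay $|\theta_t^\sigma\Delta_R^b f(y)|\lesssim\big(\min\{t/\ell(R),\ell(R)/t\}\big)^{\alpha'}\big(\tfrac{\ell(R)}{\ell(R)+\dist(y,R)}\big)^{m+\alpha}\cdot(\text{normalized }\|\Delta_R^b f\|)$ in the relevant ranges of $t$ and $y$, while the power bound of $\sigma$ is available because the stopping tree avoids the high-density cubes (which sit inside $H$ by (b)); feeding this into the collapse estimate and running a Schur/Cotlar almost-orthogonality argument sums the double series, the numerology closing exactly because $\lambda>2$ and $0<\alpha\le m(\lambda-2)$, and yields $\lesssim\sum_R\|\Delta_R^b f\|_{L^2(\sigma)}^2\lesssim\|f\|_{L^2(\sigma)}^2$. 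For the paraproduct part, orthogonality in the $R$-sum (the pieces live at scale $\ell(R)$) reduces it to $\sum_R|\langle f\rangle_R^\sigma|^2 a_R$, where $a_R$ is the $g_\lambda^*$-energy of $\beta_R b$ localized to scale $\ell(R)$ and $\beta_R$ encodes the stopping data; by the pointwise bound $g_{\lambda,\sigma,Q}^*(b)\le\zeta$ on $G_Q$ the $\{a_R\}$ form a Carleson sequence over $Q$ with constant $\lesssim\zeta^2$, so the Carleson embedding theorem gives $\lesssim\zeta^2\|f\|_{L^2(\sigma)}^2$. This is the only place hypothesis (c) is used, and it is consumed through the pointwise bound on $g_{\lambda,\sigma,Q}^*(b)$ on $G_Q$ rather than through an $L^2$ or $L^\infty$ testing inequality. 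Averaging over the grids $\D_w$ and absorbing the probabilistically bad cubes yields (ii).

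\textbf{Main obstacle.} The delicate point is the \emph{non-doubling} bookkeeping: every kernel sum and geometric series needs the power bound $\sigma(B)\lesssim r^m$, available only for balls not contained in $H$, so the whole argument must stay localized to $G_Q$ and its dyadic neighbourhood, and one must check carefully that the density-type stopping cubes really are swallowed by $H$ via (b). Interlaced with this is running the twisted martingale differences and the paraproduct estimate with only the weak-$(1,1)$-type information (c); this is what forces the set $\{g_{\lambda,\sigma,Q}^*(b)\le\zeta\}$ to be built into $G_Q$ at the outset and is why the hypotheses need only $\|b\|_{L^\infty(\sigma)}\le C_b$ together with a weak bound on $g_{\lambda,\sigma,Q}^*(b)$.
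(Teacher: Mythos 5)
Your outline follows essentially the same route as the paper: random dyadic grids with a good/bad cube reduction, $b$-adapted martingale differences along the stopping tree where $\langle b\rangle^\sigma$ is non-degenerate, a Schur-type summation for the separated/less/nearby interactions (the paper's Lemma on the matrix $A_{QR}$), hypothesis (b) supplying the power bound $\sigma(B_r)\lesssim r^m$ for every ball not swallowed by $H$, and hypothesis (c) consumed only through the pointwise bound $g^*_{\lambda,\sigma,Q}(b)\le \zeta$ off the level set, which feeds the Carleson embedding in the paraproduct. All of that matches the paper's Sections 3.2--3.3.

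There is, however, one genuine gap in the construction of $G_Q$. Your set $G_Q$ is built from the stopping family $\mathcal{S}_w$ (which contains the accretivity-stopping cubes, i.e.\ the components of $T_w$) and therefore depends on $w$, whereas the theorem requires a single measurable set. This is not merely cosmetic: if you fix one grid $w$ you cannot discard the bad cubes (the probabilistic estimate $\mathbb{P}(R\ \text{bad})\le\tau/2$ only helps under an expectation over $w$), and if you let $G_Q$ vary with $w$ the quantity $\|\mathbf{1}_{G_Q}g^*_{\lambda,\sigma,Q}(f)\|_{L^2(\sigma)}$ changes with $w$ and "averaging at the end" has no meaning. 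The paper resolves this with the device you are missing: set $P(x)=\mathbb{P}\big(\{w:\ x\in Q\setminus(H\cup T_w\cup S_0)\}\big)$ and $G_Q:=\{x\in Q:\ P(x)>\tau\}$, which is $w$-independent, satisfies $\sigma(G_Q)\simeq\sigma(Q)$ by hypothesis (a) and Chebyshev, and obeys $\mathbf{1}_{G_Q}\le\tau^{-1}\,\mathbb{E}_w\mathbf{1}_{Q\setminus(H\cup T_w\cup S_0)}$ pointwise; that last inequality is what lets one insert $\mathbb{E}_w$ inside the square-function norm and then absorb the bad cubes. A secondary, smaller point: your density-stopping threshold $\sigma(R)>\Lambda\,\ell(R)^m\sigma(Q)/\ell(Q)^m$ is normalized by $\sigma(Q)/\ell(Q)^m$, which is not controlled; the threshold that hypothesis (b) actually swallows is the absolute one $\sigma(R)>C_0'\,\ell(R)^m$ (then the circumscribing ball lies in $H$), and with that normalization no separate density stopping is needed at all --- the paper simply invokes (b) directly wherever the power bound for $\sigma$ is required.
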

%%%%%%%%%%%%%%%%%%%%%%%%%%%%%%%%%%%%%%%%%%%%

Finally, we present the non-homogeneous local $Tb$ theorem with weak $(1,1)$ testing condition.
%%%%%%%%%%%%%%%%%%%%%%%%%%%%%%%%%%%%%%%
\begin{theorem}\label{Theorem-Local}
Let $\lambda > 2$, $0 < \alpha \leq m(\lambda-2)$ and $\mu$ be a power bounded measure and $B_1,B_2 < \infty$, $ \epsilon \in (0, 1)$ be given constants. Let
$\beta > 0$, and $\mathfrak{C}$ be large enough depending only on $n$. Suppose that for every $(2, \beta)$-doubling cube $Q \subset \Rn$ with $\mathfrak{C}$-small boundary, there
exists a complex measure $\nu_Q$, such that
\begin{enumerate}
\item [(1)] $\supp \nu_Q \subset Q$;
\item [(2)] $\mu(Q)=\nu_Q(Q)$;
\item [(3)] $||\nu_Q|| \leq B_1 \mu(Q)$;
\item [(4)] For all Borel sets $A \subset Q$ satisfying $\mu(A) \leq \epsilon \mu(Q)$, there holds
$|\nu_Q|(A) \leq \frac{||\nu_Q||}{32 B_1}$.
\end{enumerate}
Then, for any $p \in (1,\infty)$, $g_{\lambda,\mu}^* : L^p(\mu) \rightarrow L^p(\mu)$ if and only if
there exist $s > 0$ and a Borel set $U_Q \subset \Rn$ with
$|\nu_Q|(U_Q) \leq \frac{||\nu_Q||}{16 B_1}$
such that
$$
\sup_{\zeta >0} \zeta^s \mu \big(\{x \in Q \setminus U_Q; g_{\lambda,Q}^*(\nu_Q) > \zeta\}\big) \leq B_2 ||\nu_Q||,\hbox{\ for \ all \ }Q
\hbox{\  as \ above. }
$$
\end{theorem}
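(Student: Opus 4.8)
The statement is an equivalence, so I argue the two implications separately, using the two results already established, Theorem~\ref{Theorem-L^p} and Theorem~\ref{Theorem-big}, as the two engines. For \emph{necessity}, assume $g_{\lambda,\mu}^*$ is bounded on $L^p(\mu)$ for all $p\in(1,\infty)$; in particular it is bounded on $L^2(\mu)$. First I would deduce the endpoint bound $\|g_\lambda^*(\nu)\|_{L^{1,\infty}(\mu)}\lesssim\|\nu\|$ for every $\nu\in\mathfrak M(\Rn)$ by the standard non-homogeneous Calder\'on--Zygmund argument: at height $\zeta$ split $\nu=\mathfrak g+\sum_j b_j$ with $\|\mathfrak g\|_{L^\infty(\mu)}\lesssim\zeta$, $\|\mathfrak g\|_{L^1(\mu)}\lesssim\|\nu\|$, $\supp b_j\subset Q_j$, $b_j(\Rn)=0$, $\sum_j\|b_j\|\lesssim\|\nu\|$ and $\sum_j\mu(2Q_j)\lesssim\|\nu\|/\zeta$; Chebyshev and the $L^2(\mu)$ bound dispose of $\mathfrak g$, the vanishing mean together with the H\"older and size conditions on $s_t$ handle $g_\lambda^*(b_j)$ on $\Rn\setminus 2Q_j$, and the control on $\sum_j\mu(2Q_j)$ absorbs the rest. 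Since $g_{\lambda,Q}^*(\nu_Q)\le g_\lambda^*(\nu_Q)$ pointwise for every cube $Q$, this yields $\sup_{\zeta>0}\zeta\,\mu(\{g_{\lambda,Q}^*(\nu_Q)>\zeta\})\le\|g_\lambda^*(\nu_Q)\|_{L^{1,\infty}(\mu)}\lesssim\|\nu_Q\|$, i.e.\ the testing condition holds with $s=1$, $U_Q=\emptyset$, and $B_2$ the (uniform) weak $(1,1)$ constant.

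For \emph{sufficiency} the plan is to verify the hypothesis of Theorem~\ref{Theorem-L^p}: for each $(2,\beta)$-doubling cube $Q$ with $\mathfrak C$-small boundary I must produce $G_Q\subset Q$ with $\mu(G_Q)\ge\theta\mu(Q)$ (some fixed $\theta\in(0,1)$) on which $g_\lambda^*:\mathfrak M(\Rn)\to L^{1,\infty}(\mu\lfloor G_Q)$ is bounded, uniformly in $Q$. Fix $Q$, the measure $\nu_Q$ from $(1)$--$(4)$, and the $s$, $U_Q$ from the testing condition. From $(2)$--$(3)$ one has $\mu(Q)=\nu_Q(Q)\le\|\nu_Q\|\le B_1\mu(Q)$, whence $|\nu_Q|(Q\setminus U_Q)\ge\frac{15}{16}\mu(Q)>\|\nu_Q\|/(32B_1)$; by $(4)$ this forces $\mu(Q\setminus U_Q)>\epsilon\mu(Q)$. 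I then take $\sigma:=\mu\lfloor(Q\setminus U_Q)$, a restriction of $\mu$ (hence power bounded, so hypothesis (b) of Theorem~\ref{Theorem-big} is vacuous and the conclusion is already in the $\mu$-scale), and let $b$ be the truncation at level $M_0\simeq B_1/\epsilon$ of the Radon--Nikodym density of $\nu_Q\lfloor(Q\setminus U_Q)$ relative to $\mu$; by $(4)$ the truncated-away part, the $\mu$-singular part of $\nu_Q$, and $\nu_Q\lfloor U_Q$ assemble into an error $\eta_Q:=\nu_Q-b\sigma$ with $\|\eta_Q\|\le\mu(Q)/8$, while $\|b\|_{L^\infty(\sigma)}\le M_0$ and, since $|\eta_Q(Q)|$ is small relative to $\mu(Q)$, $|\langle b\rangle_Q^\sigma|\ge 7/8$ (accretivity). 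Finally I set $H:=\{x\in Q:g_{\lambda,Q}^*(\nu_Q)(x)>\zeta_0\}\cup\{x\in Q:g_{\lambda,Q}^*(\eta_Q)(x)>\zeta_1\}$, with $\zeta_0,\zeta_1$ large uniform constants chosen via the testing bound and $(4)$ so that $\sigma(H)$ is a sufficiently small fraction of $\sigma(Q)$.

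With these data I would verify the hypotheses of Theorem~\ref{Theorem-big}: (b) is vacuous; (c) holds because $g_{\lambda,\sigma,Q}^*(b)\le g_{\lambda,Q}^*(\nu_Q)+g_{\lambda,Q}^*(\eta_Q)\le\zeta_0+\zeta_1$ on $Q\setminus H$, so the supremum in (c) is $\le(\zeta_0+\zeta_1)^s\sigma(Q)$ (the exponent $s$ being inherited from the testing condition); and (a) follows from the elementary stopping-cube estimate $\sigma(T_w)\le\frac{\|b\|_{L^\infty(\sigma)}-|\langle b\rangle_Q^\sigma|}{\|b\|_{L^\infty(\sigma)}-c_1}\,\sigma(Q)=:\delta_1\sigma(Q)<\sigma(Q)$ (for $c_1$ a small uniform constant) combined with the smallness of $\sigma(H)$, giving $\sigma(H\cup T_w)\le\delta_0\sigma(Q)$ with $\delta_0<1$ uniform. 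Theorem~\ref{Theorem-big} then yields $G_Q\subset(Q\setminus U_Q)\setminus H$ with $\sigma(G_Q)\simeq\sigma(Q)$, hence $\mu(G_Q)=\sigma(G_Q)\gtrsim\sigma(Q)=\mu(Q\setminus U_Q)>\epsilon\mu(Q)$, together with the localized bound $\|\mathbf 1_{G_Q}g_{\lambda,\sigma,Q}^*(f)\|_{L^2(\mu)}\lesssim\|f\|_{L^2(\mu)}$. It remains to upgrade this to $g_\lambda^*:\mathfrak M(\Rn)\to L^{1,\infty}(\mu\lfloor G_Q)$: (i) remove the truncation via the pointwise tail bound $g_\lambda^*(\nu)(x)^2-g_{\lambda,Q}^*(\nu)(x)^2\lesssim\|\nu\|^2\ell(Q)^{-2m}$ for $x\in Q$ (sum the size condition over dyadic shells, using power-boundedness); (ii) for a general $\nu\in\mathfrak M(\Rn)$ split off the far part (again a tail estimate) and run a non-homogeneous Calder\'on--Zygmund decomposition of the near part at height $\zeta$, reducing the bounded-density ``good'' piece to the $L^2(\mu\lfloor G_Q)$ estimate just obtained and the ``bad'' piece to the usual vanishing-mean kernel estimates. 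Feeding $G_Q$ into Theorem~\ref{Theorem-L^p} completes the proof.

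The main obstacle is the sufficiency direction, and this is where condition $(4)$ is used most seriously: one must choose $\zeta_0,\zeta_1,c_1$ and the splitting $\nu_Q=b\sigma+\eta_Q$ so that \emph{all} of (a)--(c) of Theorem~\ref{Theorem-big} hold simultaneously while $\sigma(H)$ stays below the small slack $1-\delta_1$ in (a). This is delicate for two linked reasons: the exceptional set $U_Q$ of the testing condition is controlled only in $|\nu_Q|$-mass, which is what forces $\sigma$ to be $\mu$ restricted to $Q\setminus U_Q$ (rather than all of $Q$) and thereby pushes part of $\nu_Q$ into the error; and the square function $g_{\lambda,Q}^*(\eta_Q)$ of the small-mass but possibly high-density error $\eta_Q$ must itself be shown to be uniformly controlled off a small set, which requires combining the testing condition, $(4)$, and the precise structure of $\eta_Q$. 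The necessity direction and the final Calder\'on--Zygmund upgrade are comparatively routine.
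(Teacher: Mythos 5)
Your overall architecture is the same as the paper's: necessity via the implication ``$L^2(\mu)$ bounded $\Rightarrow$ $g_\lambda^*:\mathfrak M(\Rn)\to L^{1,\infty}(\mu)$'' (this is exactly the paper's Proposition \ref{Pro-2-M}, and your Calder\'on--Zygmund sketch for it is fine), and sufficiency by building data $(\sigma,b,H)$ for Theorem \ref{Theorem-big} and feeding the resulting $G_Q$ into Theorem \ref{Theorem-L^p} through that same weak-type proposition. The necessity half and the final upgrade are correct in outline. The problem is in the middle of the sufficiency half, and it is exactly at the spot you yourself flag as ``delicate.''

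Your choice $\sigma:=\mu\lfloor(Q\setminus U_Q)$ with $b$ a truncated Radon--Nikodym density forces $b\,d\sigma=d\nu_Q-d\eta_Q$ with a nonzero error $\eta_Q$ (singular part of $\nu_Q$, the piece on $U_Q$, the high-density excess). To verify hypothesis (c) of Theorem \ref{Theorem-big} you then need $\sigma\bigl(\{x\in Q\setminus H:\ g_{\lambda,Q}^*(\eta_Q)(x)>\zeta_1\}\bigr)$ to be a small fraction of $\sigma(Q)$, and there is no tool available for this. The testing condition controls only $g_{\lambda,Q}^*(\nu_Q)$; condition (4) controls only the total variation $\|\eta_Q\|$; and a bound of the form $\mu(\{g_{\lambda,Q}^*(\eta_Q)>\zeta\})\lesssim\|\eta_Q\|/\zeta$ is precisely the $\mathfrak M(\Rn)\to L^{1,\infty}(\mu)$ estimate, which in this paper is only \emph{derived from} the $L^2(\mu)$ boundedness you are in the process of proving (Proposition \ref{Pro-2-M} needs the $L^2$ bound to handle the good part of its Calder\'on--Zygmund decomposition). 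So the argument is circular at this point: $\eta_Q$ has small mass but can be, e.g., purely singular with respect to $\mu$, and nothing prevents its truncated square function from exceeding any fixed $\zeta_1$ on a set of large $\sigma$-measure. The paper avoids creating $\eta_Q$ altogether by taking $\sigma:=|\nu_Q|$ and $b$ the polar density with $|b|\equiv1$, so that $b\,d\sigma=d\nu_Q$ \emph{exactly} and $g_{\lambda,\sigma,Q}^*(b)=g_{\lambda,Q}^*(\nu_Q)$ is controlled directly by the testing condition; the two difficulties this creates — $|\nu_Q|$ need not be power bounded, and the conclusion of Theorem \ref{Theorem-big} is in $L^2(\sigma)$ rather than $L^2(\mu)$ — are handled by the exceptional set $H_1$ (high-density balls, which is why hypothesis (b) of Theorem \ref{Theorem-big} is there and is \emph{not} vacuous in the paper's application) and by a stopping-time set $H_2$ outside of which $d\sigma=\varphi\,d\mu$ with $\varphi\simeq1$, permitting the transfer back to $\mu$. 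If you want to salvage your route you would have to either prove an a priori weak-type bound for $g_{\lambda,Q}^*$ on small-mass measures (not available) or restructure so that no error measure appears; the latter is what the paper does.
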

\begin{remark}If we take simply $d\mu(x)=dx$ and $s_t(y,z)=p_t(y-z)$, where $p$ is the classical Poisson kernel, then the above operator in (\ref{sf}) coincides with the $g_{\lambda}^*$-function defined and studied by Stein \cite{Stein1961} in 1961 and later studied by Fefferman \cite{Fe} in 1970, Muckenhoupt and Wheeden \cite{MR} in 1974, etc. It should be noted that even for this classical case, the results in this paper are also completely new.
\end{remark}
This paper contains some new ingredients as follows. First, by assuming that on
a big piece $g_{\lambda}^*$ is bounded from $\mathfrak{M}(\Rn)$ to
$L^{1,\infty}$, we obtain the $L^p(\mu)$ boundedness of $g_{\lambda,\mu}^*$ by using the non-homogeneous good lambda method. Then, we made use of modern concepts and probabilistic methods including random dyadic grid and good/bad cubes, which were quite effective tools to reduce our initial
estimates. As we see in \cite{CLX} and \cite{CX}, it is important to get the  reduction to good cubes. And so is it in this article. Additionally, the martingale difference operators
here were associated with transit cubes and some function $b$. Last but not least, we combined the dyadic techniques with classical methods. Compared with the classical methods, the
dyadic ones were more natural when splitting the summations or integral regions. In dyadic setting, we may boil corresponding analyses down into the inclusion or distance relationship
among dyadic cubes. And fortunately, the geometric structure of dyadic cubes are not so complex. Together with the vanishing property of martingale difference operators, the calculations in
dyadic case were much easier than that in classical case. Furthermore, in the case of $g$-function, one may use the maximal singular integral operators to control some particular terms. However, in the
case of $g_{\lambda}^*$-function, we need to use different approach to overcome this difficulty.

We organize this paper as follows:  In Section $\ref{Sec-lambda}$, we will give the proof of Theorem $\ref{Theorem-L^p}$. Section \ref{Sec-big} will be devoted to demonstrate the big piece $Tb$ theorem
$\ref{Theorem-big}$. In Section $\ref{Sec-local}$,
the proof of non-homogeneous local $Tb$ theorem $\ref{Theorem-Local}$ will be presented. Finally, in Section $\ref{Sec-M}$ and Section $\ref{Sec-RBMO}$, it is shown that the
$L^2(\mu)$ boundedness of $g_{\lambda,\mu}^*$ implies not only the $\mathfrak{M}(\Rn) \rightarrow L^{1,\infty}(\mu) $ boundedness, but also the $L^\infty(\mu)
\rightarrow RBMO(\mu)$ boundedness of $g_{\lambda,\mu}^*$.

%%%%%%%%%%%%%%%%%%%%%%%%%%%%%%%%%%%%%%%%%%%%%%%%%%%%%%%%%
\vspace{0.3cm}
\noindent\textbf{Acknowledgements.}
The first author would like to thank Henri Martikainen for some useful discussions which improved the quality of this paper.

%%%%%%%%%%%%%%%%%%%%%%%%%%%%%%%%%%%%%%%%%%%%%%%%%%%%%%%%%%%%%%%%%%%%%%%%%%%%%%%%%
%%%%%%%%%%%%%%%%%%%%%%%%%%%%%%%%%%%%%%%%%%%%%%%%%%%%%%%%%%%%%%%%%%%%%%%%%%%%%%%%%
\section{Non-homogeneous good lambda method}\label{Sec-lambda}
To prove Theorem $\ref{Theorem-L^p}$, we will utilize the non-homogeneous good lambda method given by Tolsa. However, the proof is nontrivial. The following
good lambda inequality provides a foundation for our analysis, and the proof will be postponed to the end of this section.
\begin{lemma}\label{good-lambda}Let the $t_0$-truncated version of $g_{\lambda,\mu}^*(f)$ be defined by
$$
g_{\lambda,\mu,t_0}^*(f)(x)
= \bigg(\int_{t_0}^{\infty} \int_{\Rn} \Big(\frac{t}{t + |x - y|}\Big)^{m \lambda} |\theta_t^\mu f(y)|^2
\frac{d\mu(y) dt}{t^{m+1}}\bigg)^{1/2},\ \ t_0 > 0.
$$
Then, for any $\epsilon > 0$, there exists $\delta=\delta(\epsilon)>0$ such that
$$
\mu \big(\big\{x;g_{\lambda,\mu,t_0}^*(f)(x) > (1+\epsilon)\xi, M_{\mu}f(x) \leq \delta \xi  \big\}\big)
\leq \Big( 1 - \frac{\theta}{16 \rho_0} \Big) \mu\big(\{x; g_{\lambda,\mu,t_0}^*(f)(x) > \xi\}\big),
$$
for any $\xi>0$ and every compactly supported and bounded $f \in L^p(\mu)$.
\end{lemma}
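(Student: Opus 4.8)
The plan is to follow the classical Calder\'on--Zygmund good-$\lambda$ scheme, but carried out on the level set $\Omega_\xi = \{x : g_{\lambda,\mu,t_0}^*(f)(x) > \xi\}$ with a non-homogeneous Whitney decomposition adapted to $\mu$. First I would fix $\xi>0$ and note that $\Omega_\xi$ is open (the truncated operator is lower semicontinuous), so we may perform a Whitney-type covering $\Omega_\xi = \bigcup_j Q_j$ by essentially disjoint dyadic cubes $Q_j$ with $\ell(Q_j) \simeq \dist(Q_j, \Omega_\xi^c)$ and bounded overlap of the dilates. Since the power bound $\mu(B(x,r)) \lesssim r^m$ holds, one has control of $\mu$ on the dilated cubes. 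It suffices to prove, for each fixed Whitney cube $Q_0 = Q_j$ containing a point $x_0$ with $M_\mu f(x_0) \le \delta\xi$ (cubes meeting the complement of $\{M_\mu f \le \delta\xi\}$ contribute nothing to the left side), the single-cube estimate
\begin{equation}\label{single-cube}
\mu\big(\{x \in Q_0 : g_{\lambda,\mu,t_0}^*(f)(x) > (1+\epsilon)\xi,\ M_\mu f(x) \le \delta\xi\}\big) \le \Big(1 - \frac{\theta}{16\rho_0}\Big)\mu(Q_0).
\end{equation}
Summing \eqref{single-cube} over $j$ then yields the lemma, using $\sum_j \mu(Q_j) = \mu(\Omega_\xi)$ (up to the essential-disjointness, which for a general measure needs the small-boundary machinery from the Definition, so I would choose the Whitney cubes to have $\mathfrak{C}$-small boundary, as is standard in Tolsa's framework).

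To prove \eqref{single-cube} I would split $f = f_1 + f_2$ where $f_1 = f \mathbf{1}_{cQ_0}$ and $f_2 = f \mathbf{1}_{(cQ_0)^c}$ for a suitable dilation constant $c$ (chosen via $\rho_0$). Correspondingly split $\theta_t^\mu f = \theta_t^\mu f_1 + \theta_t^\mu f_2$ and hence, by the triangle inequality in the mixed-norm space defining $g^*_{\lambda,\mu,t_0}$,
\[
g_{\lambda,\mu,t_0}^*(f)(x) \le g_{\lambda,\mu,t_0}^*(f_1)(x) + g_{\lambda,\mu,t_0}^*(f_2)(x).
\]
For the \emph{far} term $g_{\lambda,\mu,t_0}^*(f_2)$: using the size condition on $s_t$, the power bound on $\mu$, and the extra decay $\big(t/(t+|x-y|)\big)^{m\lambda}$ with $\lambda > 2$ and $0 < \alpha \le m(\lambda-2)$ — exactly the hypotheses of the lemma — one shows that $g_{\lambda,\mu,t_0}^*(f_2)(x)$ is, for $x\in Q_0$, essentially constant: it is comparable across $Q_0$ and bounded by $C M_\mu f(x_0) \le C\delta\xi$ plus a term of the form $g_{\lambda,\mu,t_0}^*(f)(z_0)$ for a reference point $z_0$ near $Q_0$ with $g_{\lambda,\mu,t_0}^*(f)(z_0) \le \xi$ (such $z_0$ exists by the Whitney property $\ell(Q_0)\simeq \dist(Q_0,\Omega_\xi^c)$). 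This is the standard "oscillation is small" estimate; the $g_\lambda^*$ decay exponent $m\lambda$ must be large enough to absorb the doubling loss from the annuli $2^k cQ_0$, which is precisely why $\lambda>2$ and the constraint on $\alpha$ appear. Thus $g_{\lambda,\mu,t_0}^*(f_2)(x) \le \xi + C\delta\xi$ on $Q_0$, and choosing $\delta$ small makes this $\le (1 + \epsilon/2)\xi$.

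It then remains to control the \emph{near} term: on the set in \eqref{single-cube} we must have $g_{\lambda,\mu,t_0}^*(f_1)(x) > (\epsilon/2)\xi$, so by Chebyshev
\[
\mu\big(\{x\in Q_0 : g_{\lambda,\mu,t_0}^*(f_1)(x) > (\epsilon/2)\xi\}\big) \le \frac{2}{\epsilon\xi}\, \|g_{\lambda,\mu,t_0}^*(f_1)\|_{L^{1,\infty}(\mu)} \lesssim \frac{1}{\epsilon\xi}\, \|f_1\|_{L^1(\mu)} = \frac{1}{\epsilon\xi}\int_{cQ_0}|f|\,d\mu.
\]
Here I invoke the hypothesis: $g_\lambda^* : \mathfrak{M}(\Rn) \to L^{1,\infty}(\mu\lfloor G_Q)$ boundedly on big pieces; combined with a standard good-lambda-free bootstrap (or, more simply, noting the truncated operator inherits the same weak $(1,1)$ bound), one gets the weak $(1,1)$ estimate for $g^*_{\lambda,\mu,t_0}$ acting on $f_1\mu$ — this is where the big-piece prior boundedness enters. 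Finally $\int_{cQ_0}|f|\,d\mu \le \mu(cQ_0)\, M_\mu f(x_0) \le C\delta\xi\,\mu(Q_0)$ by the power bound and the Whitney geometry, so the near-term measure is $\le C(\delta/\epsilon)\mu(Q_0)$. Choosing $\delta = \delta(\epsilon)$ small enough that $C\delta/\epsilon \le \theta/(16\rho_0)$ gives \eqref{single-cube}. The main obstacle is the far-term oscillation estimate: one must carefully track the interplay between the $g_\lambda^*$ weight exponent $m\lambda$, the kernel decay $\alpha$, and the power growth exponent $m$ of $\mu$ to see that the annular sums converge — this is exactly the algebraic reason the hypotheses demand $\lambda>2$ and $0<\alpha\le m(\lambda-2)$, and getting the constants to land with the factor $\theta/(16\rho_0)$ (rather than just some unspecified gain $<1$) requires bookkeeping the big-piece constant $\theta$ and the Whitney overlap constant $\rho_0$ through both the near and far estimates.
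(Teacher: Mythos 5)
Your far-term oscillation estimate is essentially the paper's argument (comparison with a reference point $x'\in\rho \widetilde Q_j\setminus\Omega_\xi$ supplied by the Whitney property, plus the kernel/decay computations that become Lemmas \ref{U(f)} and \ref{T(f)}), and the overall good-lambda skeleton is right. But there is a genuine gap in how you close the near-term estimate, and it changes the architecture of the proof. You invoke a weak $(1,1)$ bound for $g^*_{\lambda,\mu,t_0}$ on the whole Whitney cube $Q_0$ via Chebyshev. The hypothesis of Theorem \ref{Theorem-L^p} only provides $g_\lambda^*:\mathfrak M(\Rn)\to L^{1,\infty}(\mu\lfloor G_Q)$ on a \emph{big piece} $G_Q\subset Q$ with $\mu(G_Q)\ge\theta\mu(Q)$, and only for $(2,\beta)$-doubling cubes with $\mathfrak C$-small boundary. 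Your suggested ``bootstrap'' to a full weak $(1,1)$ bound is circular: in this paper the global weak $(1,1)$ estimate (Proposition \ref{Pro-2-M}) is a \emph{consequence} of the $L^2(\mu)$ boundedness one is trying to establish. Consequently your single-cube inequality with constant $1-\theta/(16\rho_0)$ cannot be proved the way you describe; the portion $Q_0\setminus G_0$, of measure up to $(1-\theta)\mu(Q_0)$, is completely uncontrolled by Chebyshev and must be paid for separately. This is exactly where the factor $\theta$ in the final constant comes from.

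The second, related omission is the Whitney selection for a non-doubling measure. You cannot arrange that \emph{all} Whitney cubes of $\Omega_\xi$ are doubling with $\mathfrak C$-small boundary; the paper's Lemma \ref{Whitney decomposition} (from \cite{MMT}) only extracts a pairwise disjoint subfamily $\{\widetilde Q_j\}_{j\in S}$ of such cubes capturing a fraction $\frac1{8\rho_0}$ of $\mu(\Omega_\xi)$. The hypothesis can be applied only to these, and the correct bookkeeping is global rather than cube-by-cube:
$$
\mathfrak F\le\mu\Big(\Omega_\xi\setminus\bigcup_{j\in S}\widetilde Q_j\Big)+\sum_{j\in S}\mu(\widetilde Q_j\setminus G_j)+\sum_{j\in S}\mu\big(\{x\in G_j:\dots\}\big)\le\Big(1-\tfrac{\theta}{8\rho_0}\Big)\mu(\Omega_\xi)+C\delta\epsilon^{-1}\sum_{j\in S}\mu(Q_j),
$$
after which choosing $\delta(\epsilon)$ small absorbs the last sum and halves the gain to $\theta/(16\rho_0)$. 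Your plan of proving $\mu(E\cap Q_j)\le(1-\theta/(16\rho_0))\mu(Q_j)$ for every Whitney cube and summing cannot produce this constant for a general power-bounded measure. (Two smaller points: the parenthetical ``cubes meeting the complement of $\{M_\mu f\le\delta\xi\}$ contribute nothing'' should read ``cubes contained in $\{M_\mu f>\delta\xi\}$''; and the openness of $\Omega_\xi$ together with $\mu(\Omega_\xi)<\infty$ and $\Omega_\xi\neq\Rn$ needs the separate argument of Claim \ref{claim}, which also furnishes the a priori finiteness of $\|g^*_{\lambda,\mu,t_0}f\|_{L^p(\mu)}$ used later.)
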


%%%%%%%%%%%%%%%%%%%%%%%%%%%%%%%%%%%%%%%%%%%%%%%%%%%%%%%
\noindent\textbf{Proof of Theorem $\ref{Theorem-L^p}$.}
It is enough to show that $g_{\lambda,\mu,t_0}^*$ is bounded on $L^p(\mu)$ uniformly in $t_0$.
Without loss of generality, we assume that $f \in L^p(\mu)$ is bounded and has a compact support. A priori assumption
$\big\|g_{\lambda,\mu,t_0}^* f\big\|_{L^p(\mu)} < \infty$ will be needed. The proof of it is contained in the proof of Claim $\ref{claim}$ below.

By Lemma $\ref{good-lambda}$, it follows that
\begin{equation}\aligned\label{theta/4}
&\mu \big(\{x;  g_{\lambda,\mu,t_0}^* (f)(x) > (1+\epsilon)\xi\}\big) \\
&\leq \mu \big(\{x;  g_{\lambda,\mu,t_0}^* (f)(x) > (1+\epsilon)\xi, M_{\mu}f(x) \leq \delta \xi \}\big)
 +\mu \big(\{x; M_{\mu}f(x) > \delta \xi \}\big) \\
&\leq \Big( 1 - \frac{\theta}{16 \rho_0} \Big) \mu\big(\{x; g_{\lambda,\mu,t_0}^*(f)(x) > \xi\}\big)
+\mu \big(\{x; M_{\mu}f(x) > \delta \xi \}\big).
\endaligned
\end{equation}
Note that
\begin{equation}\label{distribution}
\big\| f \big\|_{L^p(\mu)}^p = p \int_{0}^\infty \alpha^{p-1} \mu(\{x; |f(x)| > \alpha \}) d\alpha.
\end{equation}
Accordingly, by $(\ref{theta/4})$ and $(\ref{distribution})$, it yields that
\begin{align*}
\big\| g_{\lambda,\mu,t_0}^* (f) \big\|_{L^p(\mu)}^p
&=(1+\epsilon)^p p \int_{0}^\infty \xi^{p-1} \mu(\{x; g_{\lambda,\mu,t_0}^*(f)(x)| > (1+\epsilon)\xi \}) d\xi \\
&\leq (1+\epsilon)^p \Big( 1 - \frac{\theta}{16 \rho_0} \Big) p \int_{0}^\infty \xi^{p-1} \mu\big(\{x; g_{\lambda,\mu,t_0}^*f(x) > \xi\}\big) d\xi\\&\quad
+(1+\epsilon)^p p \int_{0}^\infty \xi^{p-1} \mu \big(\{x; M_{\mu}f(x) > \delta \xi \}\big) d\xi \\
&= (1+\epsilon)^p \Big( 1 - \frac{\theta}{16 \rho_0} \Big) \big\| g_{\lambda,\mu,t_0}^* (f) \big\|_{L^p(\mu)}^p + (1+\epsilon)^p \delta^{-p}
\big\| M_{\mu}f \big\|_{L^p(\mu)}^p.
\end{align*}
Taking $\epsilon>0$ such that
$$
(1+\epsilon)^p \Big( 1 - \frac{\theta}{16 \rho_0} \Big) = \Big( 1 - \frac{\theta}{32 \rho_0} \Big).
$$
Then the assumption $\big\|g_{\lambda,\mu,t_0}^* f\big\|_{L^p(\mu)} < \infty$ yields that
$$
\big\|g_{\lambda,\mu,t_0}^* (f)\big\|_{L^p(\mu)}
\lesssim_{\theta,\delta} \big\| M_{\mu}f \big\|_{L^p(\mu)}
\lesssim_{\theta,\delta} \big\| f \big\|_{L^p(\mu)}.
$$
This completes the proof of Theorem $\ref{Theorem-L^p}$.

\qed
%%%%%%%%%%%%%%%%%%%%%%%%%%%%%%%%%%%%%%%%%%%%%%%%%%%%%%%%%%%%%%%%%

The following estimates will be used at certain key points in the proof of Lemma $\ref{good-lambda}$ and in the other parts later.
\begin{lemma}\label{U(f)}
For any $x,x_0 \in \Rn$ and $t>0$, we have the pointwise control
\begin{equation}\label{U-V}
\mathcal{U}_t(f)(x):= \bigg(\int_{\Rn} \Big(\frac{t}{t + |x - y|}\Big)^{m \lambda} |\theta^{\mu}_t f(y)|^2 \frac{d\mu(y)}{t^m}\bigg)^{1/2}
\lesssim \mathcal{V}_t(f)(x),
\end{equation}
and
\begin{equation}\label{U-U-V}
\big| \mathcal{U}_t(f)(x) - \mathcal{U}_t(f)(x_0) \big|
\lesssim t^{-1} |x-x_0| \mathcal{V}_t(f)(x),
\end{equation}
where
$
\mathcal{V}_t(f)(x):= \int_{\Rn}\frac{t^\alpha}{(t+|x-z|)^{m+\alpha}}|f(z)|d\mu(z).
$
\end{lemma}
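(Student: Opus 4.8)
Both inequalities will be reduced to scalar weight/kernel estimates and then settled by annular decompositions against the power bound $\mu(B(x,r))\lesssim r^m$; the H\"older condition on $s_t$ will play no role. For \eqref{U-V} the plan is to write $\theta^{\mu}_t f(y)=\int_{\Rn}s_t(y,z)f(z)\,d\mu(z)$ and apply Minkowski's integral inequality in the norm $\|\cdot\|_{L^2(d\mu(y)/t^m)}$ so as to pull the integration in $z$ outside. This reduces everything to the pointwise bound, uniform in $z$,
\[
I(z):=\bigg(\int_{\Rn}\Big(\tfrac{t}{t+|x-y|}\Big)^{m\lambda}|s_t(y,z)|^2\,\frac{d\mu(y)}{t^m}\bigg)^{1/2}
\lesssim\frac{t^{\alpha}}{(t+|x-z|)^{m+\alpha}},
\]
after which $\mathcal{U}_t(f)(x)\le\int_{\Rn}|f(z)|\,I(z)\,d\mu(z)\lesssim\mathcal{V}_t(f)(x)$. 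To establish the bound for $I(z)$ I would insert the size condition $|s_t(y,z)|\lesssim t^{\alpha}(t+|y-z|)^{-(m+\alpha)}$ and split the $y$-integral into $\{|y-z|\ge\tfrac12|x-z|\}$ and $\{|y-z|<\tfrac12|x-z|\}$. On the first region $t+|y-z|\gtrsim t+|x-z|$, so the $z$-factor is already of the size $t^{\alpha}(t+|x-z|)^{-(m+\alpha)}$ and the surviving integral $\int_{\Rn}(t/(t+|x-y|))^{m\lambda}\,d\mu(y)/t^m$ is $\lesssim1$ by the power bound together with $\lambda>1$. On the second region $|x-y|\ge\tfrac12|x-z|$, hence $t+|x-y|\gtrsim t+|x-z|$; pulling out $(t/(t+|x-z|))^{m\lambda}$ and using $\int_{\Rn}t^{2\alpha}(t+|y-z|)^{-2(m+\alpha)}\,d\mu(y)\lesssim t^{-m}$ (again the power bound) leaves the factor $t^{m\lambda-2m}(t+|x-z|)^{-m\lambda}$, which is $\lesssim t^{2\alpha}(t+|x-z|)^{-2(m+\alpha)}$ since $t\le t+|x-z|$ and the standing relation between $\alpha$, $m$ and $\lambda$ makes the exponent $m\lambda-2m-2\alpha$ nonnegative.

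For \eqref{U-U-V} I would invoke the reverse triangle inequality for $\|\cdot\|_{L^2(d\mu(y)/t^m)}$: with $\phi(r):=(t/(t+r))^{m\lambda/2}$,
\[
\big|\mathcal{U}_t(f)(x)-\mathcal{U}_t(f)(x_0)\big|
\le\bigg(\int_{\Rn}\big|\phi(|x-y|)-\phi(|x_0-y|)\big|^2|\theta^{\mu}_t f(y)|^2\,\frac{d\mu(y)}{t^m}\bigg)^{1/2}.
\]
Since $|\phi'(r)|=\tfrac{m\lambda}{2}\,t^{m\lambda/2}(t+r)^{-m\lambda/2-1}\le\tfrac{m\lambda}{2t}\phi(r)$ and $\phi$ is decreasing, the mean value theorem and $\big||x-y|-|x_0-y|\big|\le|x-x_0|$ give, in the regime $|x-x_0|\le t$ that is the one used in the sequel, $\big|\phi(|x-y|)-\phi(|x_0-y|)\big|\lesssim t^{-1}|x-x_0|\,\phi(|x-y|)$; here one only notes that $t+\min(|x-y|,|x_0-y|)\gtrsim t+|x-y|$ because the two distances differ by at most $t$. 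Substituting this back and recognizing the remaining integral as $\mathcal{U}_t(f)(x)^2$, the estimate follows from \eqref{U-V}.

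The only genuinely delicate point is the region $\{|y-z|<\tfrac12|x-z|\}$ in \eqref{U-V}: there the Littlewood--Paley weight no longer localizes $y$ near $x$, so it must be spent in full to absorb the growth $t^{m\lambda-2m}$ against the target decay $(t+|x-z|)^{-2(m+\alpha)}$, and it is precisely this balance --- not merely $\lambda>1$ --- that forces the hypothesis linking $\alpha$, $m$ and $\lambda$. Everything else is a routine geometric-series computation against $\mu(B(x,r))\lesssim r^m$.
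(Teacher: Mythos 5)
Your proof is correct and follows essentially the same route as the paper's: the size condition combined with a near/far splitting of the $y$--$z$ configuration, annular summation against the power bound $\mu(B(x,r))\lesssim r^m$, and a Minkowski/Young step, with the condition $\alpha\le m(\lambda-2)/2$ spent in exactly the same place (the region where the Littlewood--Paley weight must absorb $t^{m\lambda-2m}$); for \eqref{U-U-V} both arguments rest on the derivative bound for $r\mapsto(t/(t+r))^{m\lambda/2}$ followed by \eqref{U-V}. Your explicit restriction to $|x-x_0|\le t$ in \eqref{U-U-V} is in fact necessary --- the unrestricted inequality fails when $|x-x_0|\gg t$ --- and that is precisely the regime in which the lemma is later applied, so on this point your write-up is more careful than the paper's, which asserts the corresponding pointwise bound on the weight difference without qualification.
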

\begin{proof}
Using the size condition of the kernel, one may obtain that
$$
\mathcal{U}_t(f)(x)
\lesssim \bigg[\int_{\Rn}\Big(\frac{t}{t + |x - y|}\Big)^{m \lambda} \bigg(\int_{\Rn}\frac{t^\alpha}{(t+|y-z|)^{m+\alpha}}|f(z)|d\mu(z)\bigg)^2
\frac{d\mu(y)}{t^m}\bigg]^{\frac12}.
$$
Thus, it suffices to bound the following two parts:
$$
\mathcal{U}_{t,1}(f)(x,t)
:= \bigg[\int_{\Rn}\Big(\frac{t}{t + |x - y|}\Big)^{m \lambda} \bigg(\int_{|z-x| > 2|y-x|}\frac{t^\alpha|f(z)|}{(t+|y-z|)^{m+\alpha}}d\mu(z)\bigg)^2
\frac{d\mu(y)}{t^m}\bigg]^{\frac12},
$$
and
$$
\mathcal{U}_{t,2}(f)(x,t)
:= \bigg[\int_{\Rn}\Big(\frac{t}{t + |x - y|}\Big)^{m \lambda} \bigg(\int_{|z-x|\leq 2|y-x|}\frac{t^\alpha|f(z)|}{(t+|y-z|)^{m+\alpha}}d\mu(z)\bigg)^2 \frac{d\mu(y)}{t^m}\bigg]^{\frac12}.
$$
For $\mathcal{U}_{t,1}(f)$, it holds that $|y-z| \geq |x-z| - |x-y| \gtrsim |x-z|$. Hence, it yields that
$$
\mathcal{U}_{t,1}(f)(x,t)
\lesssim \mathcal{V}_t(f)(x) \bigg[ \int_{\Rn}\Big(\frac{t}{t + |x - y|}\Big)^{m \lambda} \frac{d\mu(y)}{t^m}\bigg]^{\frac12}
\lesssim \mathcal{V}_t(f)(x).
$$
For $\mathcal{U}_{t,12}(f)$, note that if $0 < \alpha \leq m(\lambda-2)/2$, one obtains
$$
\big(\frac{t}{t + |x - y|}\big)^{m \lambda/2}
\leq \big(\frac{t}{t + |x - y|}\big)^{m +\alpha}
\lesssim \frac{t^{m+\alpha}}{(t + |x - z|)^{m+\alpha}}.
$$
Combining with the Young inequality, it gives that
\begin{align*}
\mathcal{U}_{t,2}(f)(x,t)
&\lesssim \bigg[\int_{\Rn}\bigg(\int_{|z-x|\leq 2|y-x|}\frac{t^\alpha}{(t+|y-z|)^{m+\alpha}}\frac{t^{m+\alpha}|f(z)|}{(t + |x - z|)^{m+\alpha}}d\mu(z)\bigg)^2
\frac{d\mu(y)}{t^m}\bigg]^{\frac12} \\
&\lesssim t^{m/2} || \phi*\psi_x ||_{L^2(\mu)}
\leq t^{m/2} || \phi ||_{L^2(\mu)} || \psi_x ||_{L^1(\mu)}
\lesssim || \psi_x ||_{L^1(\mu)}
= \mathcal{V}_t(f)(x),
\end{align*}
where
$
\phi(z)= \frac{t^\alpha}{(t+|z|)^{m+\alpha}}$ and $\psi_x(z)= \frac{t^\alpha}{(t+|x-z|)^{m+\alpha}}|f(z)|.
$
This shows that $(\ref{U-V})$ is true. As for the inequality $(\ref{U-U-V})$, it is trivial to check the following fact.
$$
\mathcal{H}_{x,t}(y)
:=\big| \big(\frac{t}{t + |x - y|}\big)^{m \lambda/2} - \big(\frac{t}{t + |x_0 - y|}\big)^{m \lambda/2} \big|
\lesssim \frac{|x-x_0|}{t} \big(\frac{t}{t + |x - y|}\big)^{m \lambda/2} .
$$
Therefore, it follows from $(\ref{U-V})$ that
\begin{align*}
\big| \mathcal{U}_t(f)(x) - \mathcal{U}_t(f)(x_0) \big|
&\leq \bigg(\int_{\Rn} \mathcal{H}_{x,t}(y)^2 |\theta^{\mu}_t f(y)|^2 \frac{d\mu(y)}{t^m}\bigg)^{1/2} \\
&\lesssim t^{-1} |x-x_0| \mathcal{U}_t(f)(x)
\lesssim t^{-1} |x-x_0| \mathcal{V}_t(f)(x).
\end{align*}
\end{proof}

%%%%%%%%%%%%%%%%%%%%%%%%%%%%%%%%%%%%%%%%%%%%%%%%%%%%%%%%%%%%%
\begin{lemma}\label{T(f)}
Let $B$ be a ball or a cube, and $x,x' \in B$. Then there holds that
$$
\mathcal{T}(f)(x,x')
:= \big[\iint_{\R^{n+1}_+} \big| \mathcal{G}_{t,y}(\mathbf{1}_{\Rn \setminus 2 B} f)(x) -
\mathcal{G}_{t,y}(\mathbf{1}_{\Rn \setminus 2 B} f)(x')\big|^2 \frac{d\mu(y) dt}{t^{m+1}}\big]^{\frac12}
\lesssim M_{\mu}f(x),
$$
where
$
\mathcal{G}_{t,y}(f)(x):=\big(\frac{t}{t+|x-y|}\big)^{m \lambda/2}  \theta_t^\mu f(y).
$
\end{lemma}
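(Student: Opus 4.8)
The plan is to split the region of integration at the scale $t=\ell(B)$. Put $g:=\mathbf{1}_{\Rn\setminus 2B}f$; since in $\mathcal{G}_{t,y}(g)(x)=\big(\tfrac{t}{t+|x-y|}\big)^{m\lambda/2}\theta_t^\mu g(y)$ only the prefactor depends on $x$, one has $|\mathcal{G}_{t,y}(g)(x)-\mathcal{G}_{t,y}(g)(x')|=\mathcal{H}_{x,t}(y)\,|\theta_t^\mu g(y)|$, where $\mathcal{H}_{x,t}$ is the quantity introduced in the proof of Lemma \ref{U(f)} (with $x_0=x'$). Hence
$$
\mathcal{T}(f)(x,x')^2
=\Big(\int_{0}^{\ell(B)}+\int_{\ell(B)}^{\infty}\Big)\int_{\Rn}\mathcal{H}_{x,t}(y)^2\,|\theta_t^\mu g(y)|^2\,\frac{d\mu(y)\,dt}{t^{m+1}}=:\mathrm{I}+\mathrm{II}.
$$

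The common ingredient is an estimate for $\mathcal{V}_t(g)$. Because $\supp g\subset\Rn\setminus 2B$ and $x,x'\in B$ (we use the $\ell^\infty$ metric), every $z$ in the dyadic shell $2^{k+1}B\setminus 2^{k}B$, $k\ge 1$, satisfies $|x-z|\simeq|x'-z|\gtrsim 2^{k}\ell(B)$. Decomposing $\Rn\setminus 2B$ into these shells, using the power bound $\mu(2^{k+1}B)\lesssim(2^{k}\ell(B))^{m}$, and dominating the $\mu$-average of $|f|$ over $2^{k+1}B$ by $M_{\mu}f(x)$ (legitimate because $x\in 2^{k+1}B$), I obtain, for both points,
$$
\mathcal{V}_t(g)(x)+\mathcal{V}_t(g)(x')\lesssim M_{\mu}f(x)\sum_{k\ge 1}\frac{t^{\alpha}\,(2^{k}\ell(B))^{m}}{(t+2^{k}\ell(B))^{m+\alpha}},
$$
and an elementary computation shows the last sum is $\lesssim 1$ for all $t>0$, and $\lesssim(t/\ell(B))^{\alpha}$ when $0<t<\ell(B)$.

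For $\mathrm{II}$ (the range $t\ge\ell(B)$) I would use that $|x-x'|\le\ell(B)\le t$ together with the elementary pointwise bound $\mathcal{H}_{x,t}(y)\lesssim\tfrac{|x-x'|}{t}\big(\tfrac{t}{t+|x-y|}\big)^{m\lambda/2}$ already proved inside Lemma \ref{U(f)}; by the definition of $\mathcal{U}_t$, then $(\ref{U-V})$ and the ingredient above,
$$
\mathrm{II}\lesssim\int_{\ell(B)}^{\infty}\frac{\ell(B)^{2}}{t^{2}}\,\mathcal{U}_t(g)(x)^{2}\,\frac{dt}{t}\lesssim M_{\mu}f(x)^{2}\,\ell(B)^{2}\int_{\ell(B)}^{\infty}\frac{dt}{t^{3}}\lesssim M_{\mu}f(x)^{2}.
$$
For $\mathrm{I}$ (the range $0<t<\ell(B)$) the Lipschitz gain $|x-x'|/t$ is of no use, so I would instead bound crudely $\mathcal{H}_{x,t}(y)^{2}\lesssim\big(\tfrac{t}{t+|x-y|}\big)^{m\lambda}+\big(\tfrac{t}{t+|x'-y|}\big)^{m\lambda}$, giving $\mathrm{I}\lesssim\int_{0}^{\ell(B)}\big(\mathcal{U}_t(g)(x)^{2}+\mathcal{U}_t(g)(x')^{2}\big)\tfrac{dt}{t}$; applying $(\ref{U-V})$ and the small-$t$ part of the ingredient,
$$
\mathrm{I}\lesssim M_{\mu}f(x)^{2}\int_{0}^{\ell(B)}\Big(\frac{t}{\ell(B)}\Big)^{2\alpha}\frac{dt}{t}\lesssim M_{\mu}f(x)^{2},
$$
where convergence at $t=0$ is guaranteed precisely by the separation $\supp g\cap 2B=\emptyset$, which forces the extra decay $(t/\ell(B))^{\alpha}$ (there is no cancellation to exploit here). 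Adding $\mathrm{I}$ and $\mathrm{II}$ and taking square roots yields $\mathcal{T}(f)(x,x')\lesssim M_{\mu}f(x)$.

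The only point requiring a little care is the ingredient on $\mathcal{V}_t(g)$: one must verify that the shell sum is $O(1)$ uniformly and $O((t/\ell(B))^{\alpha})$ for small $t$, and that the shell averages of $|f|$ are simultaneously controlled by $M_{\mu}f(x)$ at both $x$ and $x'$ — which is automatic since each shell cube $2^{k+1}B$ contains both points. Everything else reduces to Lemma \ref{U(f)} and routine one-variable integration in $t$.
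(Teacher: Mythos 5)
Your proof is correct, and it takes a genuinely different route from the paper's. The paper decomposes $\R^{n+1}_+$ into the four regions $\mathfrak{D}_1,\dots,\mathfrak{D}_4$ according to whether $|y-x|$ and $|y-x'|$ are $\le t$ or $>t$, subdivides each further at $t=r(B)$, and then runs Minkowski's inequality in the $z$-variable on each piece, with a final case analysis on four subsets $\mathfrak{D}_{4,2}^{\bot,j}$ of the projection. You instead observe that only the weight $\big(\tfrac{t}{t+|x-y|}\big)^{m\lambda/2}$ depends on $x$, so the integrand is exactly $\mathcal{H}_{x,t}(y)^2|\theta_t^\mu(\mathbf{1}_{\Rn\setminus 2B}f)(y)|^2$, and you split only in $t$ at the scale $\ell(B)$: for $t\ge\ell(B)$ the Lipschitz bound $\mathcal{H}_{x,t}(y)\lesssim \tfrac{|x-x'|}{t}\big(\tfrac{t}{t+|x-y|}\big)^{m\lambda/2}$ (valid here since $|x-x'|\lesssim\ell(B)\le t$) gives the convergent factor $\ell(B)^2/t^2$, while for $t<\ell(B)$ you discard the cancellation entirely and rely on the separation $\supp(\mathbf{1}_{\Rn\setminus 2B}f)\cap 2B=\emptyset$ to extract the decay $\mathcal{V}_t(\mathbf{1}_{\Rn\setminus 2B}f)(x)\lesssim (t/\ell(B))^\alpha M_\mu f(x)$; everything then reduces to the bound $\mathcal{U}_t\lesssim\mathcal{V}_t$ of Lemma \ref{U(f)} (which, like the paper's own argument, requires $m\lambda\ge 2m+2\alpha$). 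Your shell estimate and its use at both $x$ and $x'$ are sound, since each cube $2^{k+1}B$ contains both points and the power bound converts the shell averages into $M_\mu f(x)$. The trade-off: the paper's region-by-region Minkowski argument is self-contained and produces the explicit pointwise kernel bounds $\mathcal{J}_j(z)\lesssim r(B)^\alpha|z-x|^{-m-\alpha}$, whereas your argument is substantially shorter, reuses Lemma \ref{U(f)} as a black box, and mirrors the computations the paper itself performs for $\mathfrak{N}_1$ and $\mathfrak{N}_3$ in the proof of Lemma \ref{good-lambda}; it reaches the same conclusion with far less case analysis.
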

\begin{proof}
First, we split the upper space $\R^{n+1}_+$ into four pieces as follows :
\begin{eqnarray*}
\mathfrak{D}_1 &:=& \big\{(y,t) \in \R^{n+1}_+;\ |y-x| \leq t, |y-x'| \leq t \big\} \\
\mathfrak{D}_2 &:=& \big\{(y,t) \in \R^{n+1}_+;\ |y-x| > t, |y-x'| \leq t \big\} \\
\mathfrak{D}_3 &:=& \big\{(y,t) \in \R^{n+1}_+;\ |y-x| \leq t, |y-x'| > t \big\} \\
\mathfrak{D}_4 &:=& \big\{(y,t) \in \R^{n+1}_+;\ |y-x| > t, |y-x'| > t \big\},
\end{eqnarray*}
Let $\mathcal{T}_i$ be the operator $\mathcal{T}$ with the integration domain limited to $\mathfrak{D}_i$. Then, we get
$$
\mathcal{T}(f)(x,x')
\leq \mathcal{T}_1(f)(x,x') + \mathcal{T}_2(f)(x,x') + \mathcal{T}_3(f)(x,x') + \mathcal{T}_4(f)(x,x').
$$
%%%%%%%%%%%%%%%%%%%%%%%%%%%%%%%%%%%%%%%%%%%%%%%%%%%%%%%%%%%%%%%%%
\vspace{0.3cm}
\noindent\textbf{$\bullet$ Estimates of $\mathcal{T}_1(f)$, $\mathcal{T}_2(f)$ and $\mathcal{T}_3(f)$.}
We may further decompose the domain $\mathfrak{D}_1$ by :
$$
\mathfrak{D}_{1,1}:= \mathfrak{D}_1 \cap \{(y,t);\ 0 < t \leq r(B) \},\quad
\mathfrak{D}_{1,2}:= \mathfrak{D}_1 \cap \{(y,t);\ t > r(B) \}.
$$
Hence, $\mathcal{T}_1(f) \leq \mathcal{T}_{1,1}(f) + \mathcal{T}_{1,2}(f)$.

We first consider the contribution of $\mathcal{T}_{1,1}(f)$.
From the summation condition and size condition, it follows that $t + |y-z| \geq |x-y| + |y-z| \geq |x-z|$ and
$$
\big| \mathcal{G}_{t,y}(\mathbf{1}_{\Rn \setminus 2 B} f)(x) -
\mathcal{G}_{t,y}(\mathbf{1}_{\Rn \setminus 2 B} f)(x')\big|
\lesssim |\theta_t^\mu(\mathbf{1}_{\Rn \setminus 2 B} f)(y)|
\lesssim \int_{\Rn \setminus 2B} \frac{t^\alpha|f(z)|}{(t+|y-z|)^{m+\alpha}}  d\mu(z).
$$
Thus, by the Minkowski inequality, we get
\begin{align*}
\mathcal{T}_{1,1}(f)(x,x')
&\lesssim \int_{\Rn \setminus 2B}|f(z)| \bigg(\int_{0}^{r(B)}\int_{\substack{|y-x| \leq t \\ |y-x'| \leq t}} \frac{t^{2\alpha}}{(t+|y-z|)^{2m+2\alpha}}\frac{d\mu(y)
dt}{t^{m+1}}\bigg)^{1/2}d\mu(z) \\
&\lesssim \int_{\Rn \setminus 2B} \frac{|f(z)|}{|x-z|^{m+\alpha}} \bigg(\int_{0}^{r(B)}\int_{|y-x| \leq t} t^{2\alpha-m} d\mu(y) \frac{dt}{t}\bigg)^{1/2}d\mu(z) \\
&\lesssim r(B)^\alpha \sum_{k=1}^\infty \int_{2^{k+1}B \setminus 2^k B}\frac{|f(z)|}{|x-z|^{m+\alpha}}d\mu(z)
\lesssim M_{\mu}f(x).
\end{align*}
As for $\mathcal{T}_{1,2}(f)$, note that $2(t+|y-z|) \geq t + |z-x| + t - |y-x| \geq t + |z-x|$ and
\begin{equation}\label{lambda-lambda}
\Big|\Big(\frac{t}{t+|x-y|}\Big)^{m \lambda/2} - \Big(\frac{t}{t+|x'-y|}\Big)^{m \lambda/2}\Big|
\lesssim r(B)\frac{t^{m\lambda/2}}{(t+|x-y|)^{m\lambda/2+1}}.
\end{equation}
Thereby, we obtain
\begin{align*}
\mathcal{T}_{1,2}(f)(x,x')
&\lesssim \int_{\Rn \setminus 2B}|f(z)| \bigg(\int_{r(B)}^\infty \int_{\substack{|y-x| \leq t \\ |y-x'| \leq t}}
\frac{r(B)^2}{t^2}\frac{t^{2\alpha}}{(t+|y-z|)^{2m+2\alpha}}\frac{d\mu(y) dt}{t^{m+1}}\bigg)^{1/2}d\mu(z) \\
&\lesssim \int_{\Rn \setminus 2B} \frac{|f(z)|}{|z-x|^{m+\alpha}} \bigg(\int_{0}^{r(B)}\int_{|y-x| \leq t} t^{2\alpha-m} d\mu(y) \frac{dt}{t}\bigg)^{1/2}d\mu(z)\lesssim M_{\mu}f(x).
\end{align*}

Since $\mathcal{T}_2(f)$ is symmetric with $\mathcal{T}_3(f)$, we only need to give the estimate of $\mathcal{T}_2(f)$. Fortunately, similarly argument still works as in the case of $\mathcal{T}_1(f)$. In fact, for
$\mathcal{T}_2(f)$, if $t \leq r(B)$, we may utilize
$t+|y-z| \geq |y-x'| + |y-z| \geq |z-x'|$ and the method that handles $\mathcal{T}_{1,1}(f)$ to obtain the desired estimate.
If $t > r(B)$, we can make use of the fact $|y-x| \simeq |y-x'|$ and the way that deals with $\mathcal{T}_{1,2}(f)$ to get the desired conclusion.

%%%%%%%%%%%%%%%%%%%%%%%%%%%%%%%%%%%%%%%%%%%%%%%%%%%%%%%%%%%%%%%%%%
\vspace{0.3cm}
\noindent\textbf{$\bullet$ Estimate of $\mathcal{T}_4(f)$.}
Write
$$
\mathfrak{D}_{4,1}:= \mathfrak{D}_4 \cap \{(y,t);0 < t \leq r(B) \big\},\ \ \
\mathfrak{D}_{4,2}:= \mathfrak{D}_4 \cap \{(y,t);t > r(B) \}.
$$
From the inequality $(\ref{lambda-lambda})$, size condition and Minkowski's inequality, it follows that
\begin{align*}
\mathcal{T}_{4}(f)(x,x')
&\lesssim \int_{\Rn \setminus 2B}|f(z)| \bigg(\iint_{\mathfrak{D}_{4,1}} \frac{t^{m \lambda}}{(t+|x-y|)^{m\lambda}} \frac{t^{2\alpha}}{(t+|y-z|)^{2m+2\alpha}}\frac{d\mu(y)
dt}{t^{m+1}}\bigg)^{1/2}d\mu(z) \\
&\quad + \int_{\Rn \setminus 2B}|f(z)| \bigg(\iint_{\mathfrak{D}_{4,2}} \frac{r(B)^2 t^{m \lambda}}{(t+|x-y|)^{m\lambda+2}} \frac{t^{2\alpha}}{(t+|y-z|)^{2m+2\alpha}}\frac{d\mu(y)
dt}{t^{m+1}}\bigg)^{1/2}d\mu(z) \\
&:=\sum_{j=1}^2 \int_{\Rn \setminus 2B}|f(z)| \mathcal{J}_j(z)d\mu(z).
\end{align*}
To get $\mathcal{T}_{4}(f)(x,x') \lesssim M_{\mu}f(x)$, it is enough to show that
\begin{equation*}
\mathcal{J}_j(z) \lesssim \frac{r(B)^{\alpha}}{|z-x|^{m+\alpha}},\ \ \quad j=1,2.
\end{equation*}
Indeed, by the trivial estimate
\begin{equation}\label{y-z>t}
\int_{|y-z| > t} \frac{t^{m+2\alpha}}{(t+|y-z|)^{2m+2\alpha}} d\mu(y) \lesssim 1,
\end{equation}
it yields that
\begin{align*}
\mathcal{J}_1(z)^2
&\leq \int_{0}^{r(B)}\int_{\substack{|y-z| > t \\ |y-x| > |z-x|/2}} + \int_{0}^{r(B)}\int_{t < |y-x| \leq |z-x|/2} + \int_{0}^{r(B)}\int_{\substack{|y-z| \leq t \\ |y-x| > |z-x|/2}}
\\
&\lesssim \int_{0}^{r(B)}\frac{t^{2\alpha}}{|z-x|^{2m+2\alpha}} \int_{|y-z| > t} \frac{t^{m+2\alpha}}{(t+|x-y|)^{2m+2\alpha}} d\mu(y) \frac{dt}{t} \\
&\quad + \int_{0}^{r(B)}\frac{t^{2\alpha}}{|z-x|^{2m+2\alpha}} \int_{|y-x| > t}  \frac{t^{m \lambda-m}}{(t+|x-y|)^{m\lambda}}
d\mu(y) \frac{dt}{t} \\
&\quad + \int_{0}^{r(B)}\int_{|y-z| \leq t} \frac{t^{2\alpha-m}}{|z-x|^{2m+2\alpha}} d\mu(y) \frac{dt}{t}\lesssim \frac{r(B)^{2\alpha}}{|z-x|^{2m+2\alpha}},
\end{align*}
where in the last step, we have used the condition $m\lambda \geq 2m + 2\alpha$.

Finally, we need to analyze $\mathcal{J}_2(z)$. Denote the projection of $\mathfrak{D}_{4,2}$ on $\Rn$ by $\mathfrak{D}_{4,2}^{\bot}$, then
$$
\mathfrak{D}_{4,2}^{\bot}=\bigcup_{j=1}^4 \mathfrak{D}_{4,2}^{\bot,j},
$$
where
\begin{eqnarray*}
\mathfrak{D}_{4,2}^{\bot,1}&:=& \mathfrak{D}_{4,2}^{\bot} \bigcap \big\{y;|y-x| > 2 |z-x| \big\},\\
\mathfrak{D}_{4,2}^{\bot,2}&:=& \mathfrak{D}_{4,2}^{\bot} \bigcap \big\{y;|y-x| \leq 2 |z-x|,\ |y-z| > |z-x|/2 \big\},\\
\mathfrak{D}_{4,2}^{\bot,3}&:=& \mathfrak{D}_{4,2}^{\bot} \bigcap \big\{y;|y-x| \leq 2 |z-x|,\ |y-z| \leq |z-x|/2,\ |y-z|>t \big\},\\
\mathfrak{D}_{4,2}^{\bot,4}&:=& \mathfrak{D}_{4,2}^{\bot} \bigcap \big\{y;|y-x| \leq 2 |z-x|,\ |y-z| \leq |z-x|/2,\ |y-z|\leq t\big\}.
\end{eqnarray*}
If $y \in \mathfrak{D}_{4,2}^{\bot,1}$, then $|y-z| \geq |y-x| - |z-x| \geq |z-x|$.
If $y \in \mathfrak{D}_{4,2}^{\bot,2}$, then $|y-z| \gtrsim |z-x|$.
If $y \in \mathfrak{D}_{4,2}^{\bot,3}$, then $|y-x| \geq |z-x| - |y-z| \gtrsim |z-x| \gtrsim |y-x|$.
If $y \in \mathfrak{D}_{4,2}^{\bot,4}$, then $|z-x| \thickapprox |y-x|$.
Note also that
\begin{equation}\label{m-m}
\frac{t^{m \lambda-m}}{(t+|x-y|)^{m\lambda+2}} \leq t^{-2} \frac{t^{m + 2\alpha}}{(t+|x-y|)^{2m+2\alpha}},
\end{equation}
consequently, applying the inequalities $(\ref{y-z>t})$ and $(\ref{m-m})$, we deduce that
\begin{align*}
\mathcal{J}_2(z)^2
&\lesssim \int_{r(B)}^\infty \frac{r(B)^2}{|z-x|^{2m+2\alpha}} \int_{|y-x| > t}t^{2\alpha-2}
\frac{t^{m \lambda-m}}{(t+|x-y|)^{m\lambda}} d\mu(y) \frac{dt}{t} \\
&\quad + r(B)^2 \int_{r(B)}^\infty \int_{|y-z| > t} \frac{t^{-2}\ t^{m+2\alpha}}{(t+|z-x|)^{2m+2\alpha}} \frac{t^{2\alpha}}{(t+|y-z|)^{2m+2\alpha}} d\mu(y) \frac{dt}{t} \\
&\quad + r(B)^2 \int_{r(B)}^\infty \int_{|y-z| \leq t} \frac{t^{2\alpha-2-m}}{|z-x|^{2m+2\alpha}} d\mu(y) \frac{dt}{t} \\
&\lesssim \frac{r(B)^{2\alpha}}{|z-x|^{2m+2\alpha}}.
\end{align*}
Thus, we finish the proof of Lemma \ref{T(f)}.
\end{proof}
%%%%%%%%%%%%%%%%%%%%%%%%%%%%%%%%%%%%%%%%%%%%%%%%%%%%%%%%%%%%

In order to show Lemma $\ref{good-lambda}$, it is crucial to use the Whitney decomposition given in \cite{MMT}.
\begin{lemma}\label{Whitney decomposition}
If $\Omega \subset \Rn$ is open, $\Omega \neq \Rn$, then $\Omega$ can be decomposed as
$
\Omega = \bigcup_{i \in I} Q_i
$
where $\{Q_i\}_{i \in I}$ are closed dyadic cubes with disjoint interiors such that for some
constants $\rho > 20$ and $\rho_0 \geq 1$ the following holds:
\begin{enumerate}
\item [(1)] $10 Q_i \subset \Omega$ for each $i \in I$;
\item [(2)] $\rho Q_i \cap \Omega^c \neq \emptyset$ for each $i \in I$;
\item [(3)] For each cube $Q_i$, there are at most $\rho_0$ cubes $Q_j$ such that $10 Q_i \cap 10 Q_j \neq \emptyset$.
Further, for such cubes $Q_i$, $Q_j$, we have $\ell(Q_i) \simeq \ell(Q_j)$.
\end{enumerate}
Moreover, if $\mu$ is a positive Radon measure on $\Rn$ and $\mu(\Omega)<\infty$, there is a family of cubes $\{\widetilde{Q}_j\}_{j \in S}$, with $S \subset I$,
so that $Q_j \subset \widetilde{Q}_j \subset 1.1 Q_j$ , satisfying the following:
\begin{enumerate}
\item [(a)] Each cube $\widetilde{Q}_j$, $j \in S$, is $(9, 2 \rho_0)$-doubling and has $\mathfrak{C}$-small boundary.
\item [(b)] The collection $\{\widetilde{Q}_j \}_{j \in S}$ is pairwise disjoint.
\item [(c)] it holds that
\begin{equation}\label{j-S}
\mu \Big( \bigcup_{j \in S} \widetilde{Q}_j \Big) \geq \frac{1}{8 \rho_0} \mu(\Omega).
\end{equation}
\end{enumerate}
\end{lemma}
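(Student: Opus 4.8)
The strategy is to treat the two halves of the statement separately: the \emph{existence} of a Whitney family $\{Q_i\}_{i\in I}$ with (1)--(3), which is the classical dyadic Whitney decomposition, and the \emph{measure refinement} (a)--(c), which is where the real work lies. For the first half I would run the usual construction: writing $D(x)=\dist(x,\Omega^{c})\in(0,\infty)$ for $x\in\Omega$, let $Q^{x}$ be the maximal (half-open) dyadic cube containing $x$ with $10Q^{x}\subset\Omega$ --- this exists because $D(x)>0$ makes small dyadic cubes around $x$ admissible, while $\Omega\ne\Rn$ rules out arbitrarily large ones --- and take $\{Q_{i}\}$ to be the inclusion-maximal members of $\{Q^{x}:x\in\Omega\}$. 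The nested-or-disjoint-interiors dichotomy for dyadic cubes gives disjoint interiors, and they cover $\Omega$; property (1) is built in; (2) follows from maximality, since the dyadic parent $\widehat{Q_{i}}$ has $10\widehat{Q_{i}}\not\subset\Omega$ while $10\widehat{Q_{i}}\subset\rho Q_{i}$ once $\rho$ is chosen slightly above $20$; and (3) --- the comparability $\ell(Q_{i})\simeq\ell(Q_{j})$ when $10Q_{i}\cap 10Q_{j}\ne\emptyset$, together with the dimensional bound $\rho_{0}$ on the number of such neighbours --- is precisely the form of the Whitney decomposition recorded in \cite{MMT}, which I would quote.

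\textbf{The refinement: small boundary and discarding non-doubling cubes.}
Fix a positive Radon measure $\mu$ with $\mu(\Omega)<\infty$. First, for each $i$ I would produce a concentric cube $\widetilde{Q}_{i}=s_{i}Q_{i}$ with $s_{i}\in[1,1.1]$ having $\mathfrak{C}$-small boundary with $\mathfrak{C}=\mathfrak{C}(n)$: this is the standard shell-averaging argument applied to the one-parameter family $\{sQ_{i}:1\le s\le 1.1\}$ (for each dyadic width the set of scaling parameters for which the corresponding boundary shell carries too much mass has small measure, and one may in fact select a single $s_{i}$ that is good at all scales simultaneously); note $9\widetilde{Q}_{i}\subset 9.9Q_{i}\subset 10Q_{i}\subset\Omega$. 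Next I would discard the non-doubling ones: set $I'=\{i\in I:\ \widetilde{Q}_{i}\text{ is }(9,2\rho_{0})\text{-doubling}\}$. For $i\notin I'$ one has $\mu(Q_{i})\le\mu(\widetilde{Q}_{i})<\tfrac{1}{2\rho_{0}}\mu(9\widetilde{Q}_{i})$; summing and using $\sum_{i}\mathbf{1}_{9\widetilde{Q}_{i}}\le\sum_{i}\mathbf{1}_{10Q_{i}}\le\rho_{0}$ on $\Omega$ (which is exactly property (3)) gives $\sum_{i\notin I'}\mu(Q_{i})<\tfrac12\mu(\Omega)$, hence $\sum_{i\in I'}\mu(Q_{i})\ge\tfrac12\mu(\Omega)$ since the half-open cubes $Q_{i}$ partition $\Omega$.

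\textbf{Disjointification and conclusion.}
Finally I would run a Vitali-type selection inside $I'$: processing the cubes in order of decreasing side length, keep $Q_{j}$ in $S$ whenever $1.1Q_{j}$ is disjoint from $1.1Q_{k}$ for every already-selected $k$. Maximality of $S$ ensures that every $i\in I'$ admits $j=j(i)\in S$ with $1.1Q_{i}\cap 1.1Q_{j}\ne\emptyset$ and $\ell(Q_{j})\ge\ell(Q_{i})$, which forces $Q_{i}\subset 4Q_{j}\subset 9\widetilde{Q}_{j}$. Grouping the $Q_{i}$, $i\in I'$, by $j(i)$, and using the disjointness of the $Q_{i}$ together with the $(9,2\rho_{0})$-doubling of $\widetilde{Q}_{j}$,
$$
\tfrac12\mu(\Omega)\le\sum_{i\in I'}\mu(Q_{i})\le\sum_{j\in S}\mu(9\widetilde{Q}_{j})\le 2\rho_{0}\sum_{j\in S}\mu(\widetilde{Q}_{j})=2\rho_{0}\,\mu\Big(\bigcup_{j\in S}\widetilde{Q}_{j}\Big),
$$
the last equality being the pairwise disjointness of the $\widetilde{Q}_{j}\subset 1.1Q_{j}$. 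This yields (c), indeed with the better constant $\tfrac{1}{4\rho_{0}}$, while (a) and (b) hold by construction. I expect the main obstacle to be exactly this juggling act --- securing $(9,2\rho_{0})$-doubling, $\mathfrak{C}$-small boundary, and honest pairwise disjointness of the selected cubes all at once, without discarding more than a controlled fraction of $\mu(\Omega)$ --- which the two-stage \emph{discard-then-select} scheme is designed to resolve; a secondary technical point is to work with half-open dyadic cubes so that $\{Q_{i}\}$ truly partitions $\Omega$ and the cube boundaries cause no difficulty.
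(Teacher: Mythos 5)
The paper itself offers no proof of this lemma: it is quoted verbatim from \cite{MMT} (and ultimately from Chapter 9 of Tolsa's book \cite{Tolsa}), so there is no in-paper argument to compare against. Your reconstruction follows exactly the standard route of those references --- the classical maximal-dyadic-cube construction for (1)--(3), then a two-stage refinement: shell-averaging to secure $\mathfrak{C}$-small boundary, discarding of non-doubling cubes via the $\rho_0$-overlap bound in (3), and a disjointification step --- and the quantitative bookkeeping is sound; your final chain even yields the better constant $\tfrac{1}{4\rho_0}$.

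Two points need tightening. First, your parenthetical claim that a single $s_i\in[1,1.1]$ makes $s_iQ_i$ have $\mathfrak{C}(n)$-small boundary \emph{at all scales $\xi$ simultaneously} is the one place requiring a genuine argument: it is not a scale-by-scale Fubini but a maximal-function estimate for the increasing function $s\mapsto\mu(sQ_i)$ on $[1,1.1]$ (Lemma 9.43 in \cite{Tolsa}); as written you assert it rather than prove it, though citing it is legitimate. Second, and more substantively, ``processing the cubes in order of decreasing side length'' is not well defined when $\sup_{i\in I'}\ell(Q_i)=\infty$, which does occur (take $\Omega$ a half-space); finiteness of $\mu(\Omega)$ does not bound the cube sizes, so your greedy selection has no starting point and the existence of $j(i)\in S$ with $\ell(Q_{j(i)})\ge\ell(Q_i)$ is not justified in general. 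This is repairable: either use the size-comparability in (3) to run the selection within clusters of mutually intersecting $1.1$-dilates, or drop the Vitali step entirely --- by (3) the family $\{1.1Q_j\}_{j\in I'}$ has overlap at most $\rho_0$, so its (countable, locally finite) intersection graph is greedily $\rho_0$-colourable, and one colour class $S$ is honestly pairwise disjoint while carrying $\sum_{j\in S}\mu(Q_j)\ge\tfrac{1}{\rho_0}\sum_{j\in I'}\mu(Q_j)\ge\tfrac{1}{2\rho_0}\mu(\Omega)$, which gives (c) directly since $Q_j\subset\widetilde{Q}_j$. Either patch closes the gap; as stated, the selection step is the weak link.
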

%%%%%%%%%%%%%%%%%%%%%%%%%%%%%%%%%%%%%%%%%%%%%%%%%%%%%%%%

\noindent\textbf{Proof of Lemma $\ref{good-lambda}$.}
Set
$$
\Omega_{\xi}:=\big\{x\in \Rn; g_{\lambda,\mu,t_0}^{*} (f)(x) > \xi \big\}, \quad \ \text{for any} \ \xi >0.
$$
To apply the above Whitney decomposition, we need the following claim.
%%%%%%%%%%%%%%%%%%%%%%%%%%%%%%%%%%%%%%%%%%%%%%%%
\begin{claim}\label{claim}
$\Omega_{\xi} \neq \Rn$, $\mu(\Omega_{\xi})<\infty$ and $\Omega_{\xi}$ is an open set.
\end{claim}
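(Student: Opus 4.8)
The plan is to establish the three assertions of Claim~\ref{claim} separately, all of them being relatively soft consequences of the $L^p(\mu)$ a priori bound, the size condition on the kernel, and the power boundedness of $\mu$. Throughout, fix $\xi > 0$ and recall $f \in L^p(\mu)$ is bounded with compact support, and that $g_{\lambda,\mu,t_0}^*$ truncates the $t$-integration to $t \geq t_0 > 0$.

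First I would prove $\mu(\Omega_\xi) < \infty$, which I view as the essential point and the one hiding the a priori hypothesis. By Chebyshev it suffices to show $g_{\lambda,\mu,t_0}^*(f) \in L^p(\mu)$, i.e.\ exactly the a priori estimate $\|g_{\lambda,\mu,t_0}^* f\|_{L^p(\mu)} < \infty$ mentioned in the proof of Theorem~\ref{Theorem-L^p}. I would prove this directly here: using the size condition and Lemma~\ref{U(f)}, write $g_{\lambda,\mu,t_0}^*(f)(x)^2 = \int_{t_0}^\infty \mathcal{U}_t(f)(x)^2 \, t^{-1} \, dt \lesssim \int_{t_0}^\infty \mathcal{V}_t(f)(x)^2 \, t^{-1}\, dt$, then bound $\mathcal{V}_t(f)(x) \lesssim t^{-m} \|f\|_{L^1(\mu)}$ crudely for $|x|$ large (using $\supp f$ compact) so the $t$-integral from $t_0$ to $\infty$ converges and decays, while for the remaining bounded region one uses $\mathcal{V}_t(f) \lesssim M_\mu f$ together with the $t_0$-truncation to get an $L^\infty$-type bound on compact sets; integrating over the (bounded) region where this is not yet small and combining gives $g_{\lambda,\mu,t_0}^*(f) \in L^p(\mu)$. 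Hence $\mu(\Omega_\xi) \leq \xi^{-p}\|g_{\lambda,\mu,t_0}^* f\|_{L^p(\mu)}^p < \infty$.

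Next, $\Omega_\xi \neq \Rn$ follows immediately from the same decay estimate: for $|x|$ large enough that $\dist(x, \supp f) \geq 1$, say, the bound $\mathcal{V}_t(f)(x) \lesssim (t + \dist(x,\supp f))^{-m-\alpha} t^\alpha \|f\|_{L^1(\mu)}$ forces $g_{\lambda,\mu,t_0}^*(f)(x) \to 0$ as $|x| \to \infty$, so $g_{\lambda,\mu,t_0}^*(f)(x) \leq \xi$ for all sufficiently large $|x|$, whence $\Omega_\xi \neq \Rn$. Finally, openness: I would show $x \mapsto g_{\lambda,\mu,t_0}^*(f)(x)$ is (lower semi-)continuous. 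The cleanest route is the pointwise modulus-of-continuity estimate \eqref{U-U-V} of Lemma~\ref{U(f)}, which gives $|\mathcal{U}_t(f)(x) - \mathcal{U}_t(f)(x_0)| \lesssim t^{-1}|x - x_0| \mathcal{V}_t(f)(x)$; squaring, integrating $t^{-1}\,dt$ over $[t_0,\infty)$, and using the truncation $t \geq t_0$ together with the integrability established above shows that $g_{\lambda,\mu,t_0}^*(f)$ is locally Lipschitz (or at least continuous), from which $\Omega_\xi = \{g_{\lambda,\mu,t_0}^*(f) > \xi\}$ is open. Alternatively one can invoke the reverse triangle inequality in the $L^2\big(\frac{d\mu(y)dt}{t^{m+1}}\big)$ norm directly: $|g_{\lambda,\mu,t_0}^*(f)(x) - g_{\lambda,\mu,t_0}^*(f)(x_0)| \leq \mathcal{T}$-type quantity, continuous in $x$ by dominated convergence.

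The main obstacle I anticipate is the $\mu(\Omega_\xi) < \infty$ part, precisely because it is where the a priori assumption must be genuinely verified rather than quoted; the care needed is in splitting $\Rn$ into the region near $\supp f$ (where one spends the $t_0$-truncation and $M_\mu f \in L^p(\mu)$, using $\mu(B(x,r)) \lesssim r^m$) and the region far from $\supp f$ (where the kernel size condition gives polynomial decay in $|x|$ making the $L^p(\mu)$ tail finite). The continuity/openness assertion is routine given Lemma~\ref{U(f)}, and $\Omega_\xi \neq \Rn$ is an immediate corollary of the far-field decay; so essentially all the real work is the single integrability estimate, after which the remaining two claims cost almost nothing.
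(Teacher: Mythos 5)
Your proposal follows essentially the same route as the paper: a far-field decay estimate for $\mathcal{U}_t(f)$ coming from the size condition and the compact support of $f$ yields both $\|g^*_{\lambda,\mu,t_0}(f)\|_{L^p(\mu)}<\infty$ (hence $\mu(\Omega_\xi)<\infty$ by Chebyshev) and $g^*_{\lambda,\mu,t_0}(f)(x)\to 0$ as $|x|\to\infty$ (hence $\Omega_\xi\neq\Rn$), while openness follows from the Lipschitz-type continuity furnished by $(\ref{U-U-V})$, exactly as in the paper. The one caution is that the crude bound $\mathcal{V}_t(f)(x)\lesssim t^{-m}\|f\|_{L^1(\mu)}$ in your first paragraph carries no decay in $x$ and cannot by itself make the $L^p(\mu)$ tail finite when $\mu(\Rn)=\infty$; you must use the bound $\mathcal{V}_t(f)(x)\lesssim t^{\alpha}(t+\dist(x,\supp f))^{-m-\alpha}\|f\|_{L^1(\mu)}$ from your second paragraph throughout, which is precisely what the paper's estimate $(\ref{C(f)})$ encodes via the splitting $(t+\dist(x,B(0,r_0)))^{-m}\leq (t_0+\dist(x,B(0,r_0)))^{-(m-\varepsilon)}t^{-\varepsilon}$.
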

%%%%%%%%%%%%%%%%%%%%%%%%%%%%%%%%%%%%%%%%%%%%%%%%
For the sake of descriptive integrality, we postpone the proof of the claim at the end of this section.
Making use of Lemma $\ref{Whitney decomposition}$, one can get a family of dyadic cubes $\{Q_i\}_{i \in I}$ with disjoint interior such that $\Omega_{\xi} = \bigcup_{i \in I}Q_i$ and
$\rho Q_i \cap \Omega_{\xi}^c \neq \emptyset$. The collection $\{\widetilde{Q}_j \}_{j \in S}$ satisfies all properties of lemma \ref{Whitney decomposition}. From the assumption in Theorem $\ref{Theorem-L^p}$
and the fact that the cubes $\{\widetilde{Q}_j\}_{j \in S}$ have $\mathfrak{C}$-small boundary and are $(9, 2 \rho_0)$-doubling, it follows that there exists subset $G_j \subset
\widetilde{Q}_j$ with $\mu(G_j) \geq \theta \mu(\widetilde{Q}_j)$ such that $g_{\lambda}^* : \mathfrak{M}(\Rn) \rightarrow L^{1,\infty}(\mu \lfloor G_j)$, with norm bounded uniformly
on $j \in S$.
By the inequality $(\ref{j-S})$, we have
\begin{align*}
\mathfrak{F}
&:=\mu \big(\big\{x;g_{\lambda,\mu,t_0}^*(f)(x) > (1+\epsilon)\xi, M_{\mu}f(x) \leq \delta \xi  \big\}\big) \\
&\leq \mu \Big(\Omega_{\xi} \setminus \bigcup_{j \in S} \widetilde{Q}_j \Big) +  \sum_{j \in S} \mu(\widetilde{Q}_j \setminus G_j)
+ \sum_{j \in S} \mu \big(\big\{x \in G_j;g_{\lambda,\mu,t_0}^*(f)(x)> (1+\epsilon)\xi, M_{\mu}f(x) \leq \delta \xi  \big\}\big) \\
&\leq \mu (\Omega_{\xi}) - \theta \mu \Big( \bigcup_{j \in S} \widetilde{Q}_j \Big) + \sum_{j \in S} \mu \big(\big\{x \in G_j;g_{\lambda,\mu,t_0}^*(f)(x) > (1+\epsilon)\xi,
M_{\mu}f(x) \leq \delta \xi  \big\}\big) \\
&\leq \Big(1-\frac{\theta}{8 \rho_0}\Big) \mu(\Omega_\xi)  +\sum_{j \in S} \mu \big(\big\{x \in G_j;g_{\lambda,\mu,t_0}^*(f)(x) > (1+\epsilon)\xi, M_{\mu}f(x) \leq \delta \xi
\big\}\big) .
\end{align*}
In order to get the final estimate for $\mathfrak{F}$, we shall show that
\begin{equation}\label{subset}
\big\{x \in \widetilde{Q}_j;g_{\lambda,\mu,t_0}^*(f)(x) > (1+\epsilon)\xi, M_{\mu}f(x) \leq \delta \xi  \big\}
\subset \big\{x \in \widetilde{Q}_j; g_{\lambda,\mu,t_0}^*(f \mathbf{1}_{2 Q_i})(x) > \epsilon \xi/2 \big\}.
\end{equation}
Once $(\ref{subset})$ is obtained, we may deduce that
\begin{align*}
\mathfrak{F}_j &:=\mu \big(\big\{x \in G_j;g_{\lambda,\mu,t_0}^*(f)(x) > (1+\epsilon)\xi, M_{\mu}f(x) \leq \delta \xi  \big\}\big) \\
&\leq \mu \big(\big\{x \in G_j;  g_{\lambda,\mu,t_0}^*(f \mathbf{1}_{2 Q_i})(x) > \epsilon \xi/2 \big\}\big)
\leq \frac{c}{\epsilon \xi} \int_{2 \widetilde{Q}_j} |f| d\mu.
\end{align*}
If $\widetilde{Q}_j$ contains some point $x_0$ such that $M_{\mu}f(x_0) \leq \delta \xi$, then
\begin{align*}
\mathfrak{F}_i
&\leq \frac{c}{\epsilon \xi} \int_{Q(x_0,4\ell(\widetilde{Q}_j))} |f| d\mu
\leq \frac{c}{\epsilon \xi} \mu\big(Q(x_0,4\ell(\widetilde{Q}_j))\big) M_{\mu} f(x_0)\\&
\leq c \mu(10 Q_j) \delta \epsilon^{-1}
\leq 2c \rho_0 \delta \epsilon^{-1} \mu(Q_j).
\end{align*}
Hence, choosing $\delta=\delta(\epsilon)$ small enough, it yields that
$$
\mathfrak{F}
\leq \Big(1-\frac{\theta}{8 \rho_0}\Big) \mu(\Omega_\xi) + c' \sum_{j \in S} \delta \epsilon^{-1} \mu(Q_j)
\leq \Big(1-\frac{\theta}{16 \rho_0}\Big) \mu(\Omega_\xi),
$$

We are left to prove $(\ref{subset})$. Set $x \in \widetilde{Q}_j$ satisfying $g_{\lambda,\mu,t_0}^*(f)(x) > (1+\epsilon)\xi$ and
$M_{\mu}f(x) \leq \delta \xi$. It is enough to show that
\begin{equation}\label{2-Qi}
g_{\lambda,\mu,t_0}^*(f \mathbf{1}_{2 \widetilde{Q}_j})(x) > \epsilon \xi/2.
\end{equation}
Since there holds that
\begin{align*}
g_{\lambda,\mu,t_0}^*(f \mathbf{1}_{2 \widetilde{Q}_j} )(x)
\geq g_{\lambda,\mu,t_0}^*(f)(x) - g_{\lambda,\mu,t_0}^*(f \mathbf{1}_{\Rn \setminus 2 \widetilde{Q}_j})(x)
\geq (1+\epsilon) \xi - g_{\lambda,\mu,t_0}^*(f \mathbf{1}_{\Rn \setminus 2 \widetilde{Q}_j})(x),
\end{align*}
we are reduced to demonstrating
\begin{equation}\label{Rn-2Qj}
g_{\lambda,\mu,t_0}^*(f \mathbf{1}_{\Rn \setminus 2 \widetilde{Q}_j})(x) \leq (1+\epsilon/2) \xi.
\end{equation}
Take $x' \in \rho \widetilde{Q}_j \setminus \Omega_{\xi}$. We may assume that $t_0 < 2 \rho \ell(\widetilde{Q}_j)$. Then $g_{\lambda,\mu,t_0}^*(f)(x') \leq \xi$ and
\begin{equation}\label{N-N-N}
g_{\lambda,\mu,t_0}^*(f \mathbf{1}_{\Rn \setminus 2 \widetilde{Q}_j})(x)
\leq \mathfrak{N}_1 + \mathfrak{N}_2 + \mathfrak{N}_3 + \mathfrak{N}_4,
\end{equation}
where we have used the notion $\mathfrak{N}_i$ as follows:
\begin{eqnarray*}
\mathfrak{N}_1
&:=& \bigg(\int_{0}^{2 \rho \ell(\widetilde{Q}_j)} \int_{\Rn} \Big(\frac{t}{t + |x - y|}\Big)^{m \lambda} |\theta_t^\mu (f \mathbf{1}_{\Rn \setminus 2 \widetilde{Q}_j})(y)|^2
\frac{d\mu(y) dt}{t^{m+1}}\bigg)^{1/2}, \\
\mathfrak{N}_2
&:=& \bigg(\int_{2 \rho \ell(\widetilde{Q}_j)}^{\infty} \int_{\Rn} \Big(\frac{t}{t + |x' - y|}\Big)^{m \lambda} |\theta_t^\mu f(y)|^2
\frac{d\mu(y) dt}{t^{m+1}}\bigg)^{1/2}, \\
\mathfrak{N}_3
&:=& \bigg(\int_{2 \rho \ell(\widetilde{Q}_j)}^{\infty} \int_{\Rn} \Big(\frac{t}{t + |x' - y|}\Big)^{m \lambda} |\theta_t^\mu (f \mathbf{1}_{2 \widetilde{Q}_j})(y)|^2
\frac{d\mu(y) dt}{t^{m+1}}\bigg)^{1/2}, \\
\mathfrak{N}_4
&:=& \bigg| \bigg(\int_{2 \rho \ell(\widetilde{Q}_j)}^{\infty} \int_{\Rn} \Big(\frac{t}{t + |x' - y|}\Big)^{m \lambda} |\theta_t^\mu (f \mathbf{1}_{\Rn \setminus 2
\widetilde{Q}_j})(y)|^2
\frac{d\mu(y) dt}{t^{m+1}}\bigg)^{1/2} \\
&{}& -  \bigg(\int_{2 \rho \ell(\widetilde{Q}_j)}^{\infty} \int_{\Rn} \Big(\frac{t}{t + |x - y|}\Big)^{m \lambda} |\theta_t^\mu (f \mathbf{1}_{\Rn \setminus 2 \widetilde{Q}_j})(y)|^2
\frac{d\mu(y) dt}{t^{m+1}}\bigg)^{1/2} \bigg|.
\end{eqnarray*}
Since $t_0 < 2 \rho \ell(\widetilde{Q}_j)$, then $\mathfrak{N}_2 \leq g_{\lambda,\mu,t_0}^*(f)(x') \leq \xi.$
In addition, by Lemma $\ref{T(f)}$, it is easy to see that
$$
\mathfrak{N}_4 \leq \mathcal{T}(f)(x,x') \lesssim M_{\mu}f(x) \leq \delta \xi.$$
As for $\mathfrak{N}_1$, we have
\begin{align*}
\mathfrak{N}_1
= \bigg(\int_{0}^{2 \rho \ell(\widetilde{Q}_j)} \mathcal{U}_t(f \mathbf{1}_{\Rn \setminus 2 \widetilde{Q}_j})(x)^2 \frac{dt}{t}\bigg)^{1/2}.
\end{align*}
Furthermore, Lemma $\ref{U(f)}$ gives that
\begin{align*}
\mathcal{U}_t(f \mathbf{1}_{\Rn \setminus 2 \widetilde{Q}_j})(x)
&\lesssim \mathcal{V}_t(f \mathbf{1}_{\Rn \setminus 2 \widetilde{Q}_j})(x) \\
&\leq t^{\alpha} \sum_{j=0}^{\infty} \int_{2^j \ell(\widetilde{Q}_j) < |z-x| \leq 2^{j+1} \ell(\widetilde{Q}_j)} \frac{|f(z)|}{|z-x|^{m+\alpha}}  d\mu(z) \\
&\leq t^{\alpha} \sum_{j=0}^{\infty} \big( 2^j \ell(\widetilde{Q}_j) \big)^{-m-\alpha}\int_{Q(x, 2 \cdot 2^{j+1})} \frac{|f(z)|}{|z-x|^{m+\alpha}}  d\mu(z) \\
&\lesssim t^{\alpha} \ell(\widetilde{Q}_j)^{-\alpha} M_{\mu}f(x).
\end{align*}
Hence, one obtains
\begin{align*}
\mathfrak{N}_1
\lesssim \ell(\widetilde{Q}_j)^{-\alpha} M_{\mu}f(x) \bigg(\int_{0}^{2 \rho \ell(\widetilde{Q}_j)} t^{2 \alpha} \frac{dt}{t}\bigg)^{1/2}
\lesssim M_{\mu}f(x)
\leq \delta \xi.
\end{align*}
It only remains to dominate $\mathfrak{N}_3$. By Lemma $\ref{U(f)}$ again, we get
\begin{align*}
\mathfrak{N}_3
&= \Big(\int_{2 \rho \ell(\widetilde{Q}_j)}^{\infty} \mathcal{U}_t(f \mathbf{1}_{2 \widetilde{Q}_j})(x')^2 \frac{dt}{t}\Big)^{1/2} \lesssim \int_{2 \widetilde{Q}_j}|f(z)| d\mu(z) \Big(\int_{2 \rho \ell(\widetilde{Q}_j)}^{\infty} \frac {dt}{t^{2m+1}}\Big)^{1/2} \lesssim M_{\mu}f(x)
\leq \delta \xi.
\end{align*}
Collecting the above estimates, for small enough $\delta=\delta(\epsilon)$, we deduce that
$$
g_{\lambda,\mu,t_0}^*(f \mathbf{1}_{\Rn \setminus 2 \widetilde{Q}_j})(x)
\leq (1 + c \delta) \xi
\leq (1+\epsilon/2) \xi,
$$
Thus, this completes the proof of $(\ref{Rn-2Qj})$ and the proof of Lemma $\ref{good-lambda}$ is also finished.

\qed
%%%%%%%%%%%%%%%%%%%%%%%%%%%%%%%%%%%%%%%

\noindent\textbf{Proof of Claim $\ref{claim}$.}
We begin with showing $\Omega_{\xi} \neq \Rn$ and $\mu(\Omega_{\xi})<\infty$. Let $r_0 > 0$ such that $\supp f \subset B(0,r_0)$.
By Lemma $\ref{U(f)}$, it follows that for $t \geq t_0$
\begin{align*}
\mathcal{U}_t(f)(x)
&\lesssim ||f||_{L^{\infty}(\mu)} \int_{B(0,r_0)} \frac{t^{\alpha}}{(t+|x-z|)^{m+\alpha}} d\mu(z) \\
&\leq ||f||_{L^{\infty}(\mu)} \int_{B(0,r_0)} \frac{1}{(t+|x-z|)^{m}} d\mu(z) \\
&\lesssim ||f||_{L^{\infty}(\mu)} \frac{r_0^m}{(t+\dist(x,B(0,r_0)))^m} \\
&\leq ||f||_{L^{\infty}(\mu)} \frac{r_0^m}{(t_0+\dist(x,B(0,r_0)))^{m-\varepsilon}} \frac{1}{t^{\varepsilon}},
\end{align*}
where $\varepsilon \in (0,m(1-1/p))$. Then it yields that
\begin{equation}\label{C(f)}
g_{\lambda,\mu,t_0}^*(f)(x)
\leq C(f,t_0) \frac{r_0^m}{(t_0+\dist(x,B(0,r_0)))^{m-\varepsilon}}.
\end{equation}
Moreover, the above inequality implies that
$$
\big\| g_{\lambda,\mu,t_0}^*(f) \big\|_{L^p(\mu)}
\lesssim C(f,t_0) r_0^m \bigg(\int_{\Rn} \frac{d\mu(x)}{(t_0+\dist(x,B(0,r_0)))^{p(m-\varepsilon)}} \bigg)^{1/p}
< \infty.
$$
In addition, the inequality $(\ref{C(f)})$ also indicates that $\lim\limits_{|x| \rightarrow \infty}g_{\lambda,\mu,t_0}^*(f)(x) = 0$.
Thus, there exists a constant $R_0>0$ such that $\Omega_{\xi} \subset B(0,R_0)$, which implies that $\Omega_{\xi} \neq \Rn$ and $\mu(\Omega_{\xi}) < \infty$.

Next, we will show that $\Omega_{\xi}$ is an open set. This follows essentially from the fact that $x \mapsto g_{\lambda,\mu,t_0}^*(f)(x)$ is a continuous map.
The triangle inequality gives that
$$
\big| g_{\lambda,\mu,t_0}^*(f)(x) - g_{\lambda,\mu,t_0}^*(f)(x_0) \big|
\leq  \bigg(\int_{t_0}^{\infty} \big| \mathcal{U}_t(f)(x) - \mathcal{U}_t(f)(x_0) \big|^2 \frac{dt}{t}\bigg)^{1/2}.
$$
Using $\ref{U-U-V}$, for any $t \geq t_0$, we get
\begin{align*}
\big| \mathcal{U}_t(f)(x) - \mathcal{U}_t(f)(x_0) \big|
&\lesssim \frac{|x-x_0|}{t^{1-\alpha}} \int_{\Rn} \frac{|f(z)|}{(t+|x-z|)^{m+\alpha}} d\mu(z) \\
& \lesssim \frac{|x-x_0|}{t^{1-\alpha_0}} ||f||_{L^p(\mu)} \bigg(\int_{\Rn} \frac{d\mu(z)}{(t_0+|x-z|)^{(m+\alpha_0 )p'}} \bigg)^{1/{p'}} \\
& \leq C(t_0) \frac{|x-x_0|}{t^{1-\alpha_0}} ||f||_{L^p(\mu)},
\end{align*}
where the auxiliary number $\alpha_0 \in (0,1)$. Therefore, we deduce that
$$
\big| g_{\lambda,\mu,t_0}^*(f)(x) - g_{\lambda,\mu,t_0}^*(f)(x_0) \big|
\leq C(t_0) |x-x_0| ||f||_{L^p(\mu)},
$$
which implies the continuity of $x \mapsto g_{\lambda,\mu,t_0}^*(f)(x)$.
The proof of Claim $\ref{claim}$ is thus finished.

\qed
%%%%%%%%%%%%%%%%%%%%%%%%%%%%%%%%%%%%%%%%%%%%%%%%%%%%%%%%%%%%%%%%%%%%%%%%%%%%%%%%%%
%%%%%%%%%%%%%%%%%%%%%%%%%%%%%%%%%%%%%%%%%%%%%%%%%%%%%%%%%%%%%%%%%%%%%%%%%%%%%%%%%%
\section{Big piece $Tb$ theorem}\label{Sec-big}
In this section, our task is to demonstrate Theorem $\ref{Theorem-big}$. We need some fundamental tools including random dyadic grid and good cube used in
\cite{MMV}, which essentially traces back to \cite{NTV2003}.
\begin{definition}
Given a cube $Q \subset \Rn$, we consider the following random dyadic grid. For convenience we may assume that $c_Q = 0$.
Let $N \in \Z$ be defined by the requirement $2^{N-3} \leq \ell(Q) < 2^{N-2}$. Consider the random square
$Q^*_w = w + [-2^N, 2^N)^n$, where $w \in [-2^{N-1}, 2^{N-1})^n =: \Omega_N = \Omega$. The set $\Omega$ is equipped with the normalised Lebesgue measure $\mathbb{P}_N = \mathbb{P}$.
We define the grid $\D_w := \D(Q^*_w)$ (the local dyadic grid generated by the cube $Q^*_w$). Notice that $Q \subset \eta Q^*_w$ for some $\eta < 1$, and $\ell(Q) \simeq \ell(Q^*_w)$.
\end{definition}

\begin{definition}
A cube $I \in \mathcal{D}_0$ is said to be $\D_w$-good if there exists a $J \in \D_w$ with $\ell(J) \geq 2^r \ell(I)$ such that $\dist(I,\partial J) > \ell(I)^{\gamma}
\ell(J)^{1-\gamma}$. Otherwise, $I$ is called $\D_w$-bad.
Here $r \in \Z_+$ is a fixed large enough parameter, and $\gamma =\frac{\alpha}{2(m+\alpha)}$.
\end{definition}
%%%%%%%%%%%%%%%%%%%%%%%%%%%%%%%%%%%%%%%%%%%%%%%%%%%%%%%%%%%%%%%%%%%%%%%%
\subsection{Proof of the first result}\label{subsec-first}
First, we will show that the result $(i)$ in Theorem $\ref{Theorem-big}$ is true.
Set $S_0 :=\big\{x \in Q; g_{\lambda,\sigma,Q}^*(b)(x) > \xi_0 \big\}\ \text{for fixed}\ \xi_0 \in (0,\infty)$.
Then, if we take $\xi_0$ large enough such that $\xi_0^s > 2C_1/(1-\delta_0)$, there holds that
\begin{align*}
\sigma(H \cup T_w \cup S_0)
&\leq \sigma(H \cup T_w) + \sigma(S_0 \setminus H)
\leq \delta_0 \sigma(Q) + \sigma \big(\big\{x \in Q \setminus H; g_{\lambda,\sigma,Q}^*(b)(x) > \xi_0 \big\}\big)\\
&\leq \delta_0 \sigma(Q) + C_1 \xi_0^{-s} \sigma(Q)
\leq (1+\delta_0)/2 \ \sigma(Q)
:=\delta_1 \sigma(Q),
\end{align*}
Let $\tau=(1-\delta_1)/2$, we now introduce the notions $G_Q$ and $P(x)$:
$$
G_Q := \{x \in Q;\ P(x) > \tau \}\ \text{and}\
P(x)=\mathbb{P}\big(\{w \in \Omega; x \in Q \setminus (H \cup T_w \cup S_0) \}\big).
$$
Then, we have
\begin{align*}
(1-\tau)\sigma(Q \setminus G_Q)
&\leq \int_{Q \setminus G_Q} (1-P(x)) d\sigma(x)
\leq \int_{Q} (1-P(x)) d\sigma(x) \\
&=\sigma(Q)-\int_{\Omega} \sigma(H \cup T_w \cup S_0) d\mathbb{P}(w) \\
&\leq \sigma(Q)-(1-\tau)\sigma(Q \setminus G_Q).
\end{align*}
Therefore, it holds that
$$
\sigma(Q) \leq \frac{2-\tau}{1-\tau}\sigma(G_Q).
$$
\qed
%%%%%%%%%%%%%%%%%%%%%%%%%%%%%%%%%%%%%%%%%%%%%%%%%%%%%%%%%%%%%%%%%%%%%%%%
%%%%%%%%%%%%%%%%%%%%%%%%%%%%%%%%%%%%%%%%%%%%%%%%%%%%%%%%%%%%%%%%%%%%%%%%
\subsection{The Probabilistic Reduction}
From now on, we will give the proof of $(ii)$ in Theorem $\ref{Theorem-big}$. Some reductions will be made, and we need to discard the bad cubes and consider the contributions of good cubes.
\subsubsection{\textbf{Discarding bad cubes.}}
We may assume that $\big\| \mathbf{1}_{G_Q} g_{\lambda,\sigma,Q}^* \big\|_{L^2(\sigma)} < \infty$. Indeed, adopting the similar methods used in Proposition $3.1$ \cite{CX}, we can obtain the priori assumption. Then
\begin{align*}
\big\| \mathbf{1}_{G_Q} g_{\lambda,\sigma,Q}^* \big\|_{L^2(\sigma)}^2
&=\int_{G_Q} \int_{0}^{\ell(Q)} \int_{\Rn} \Big(\frac{t}{t+|x-y|}\Big)^{m\lambda} |\theta_t^\sigma f(y)|^2 \frac{d\mu(y) dt}{t^{m+1}} d\sigma(x) \\
&\leq \tau^{-1} \mathbb{E}_w \int_{G_Q \setminus [H \cap T_w \cap S_0]} \int_{0}^{\ell(Q)} \int_{\Rn} \Big(\frac{t}{t+|x-y|}\Big)^{m\lambda} |\theta_t^\sigma f(y)|^2 \frac{d\mu(y)
dt}{t^{m+1}} d\sigma(x) \\
&= \tau^{-1} \mathbb{E}_w \sum_{R \in \mathcal{D}_0} \int_{(R \cap G_Q) \setminus (H \cap T_w \cap S_0)} \int_{\ell(R)/2}^{\min\{\ell(R),\ell(Q)\}} \int_{\Rn}
\Big(\frac{t}{t+|x-y|}\Big)^{m\lambda} \\&\quad \times|\theta_t^\sigma f(y)|^2
\frac{d\mu(y) dt}{t^{m+1}} d\sigma(x).
\end{align*}
Next, we try to bound the summation over the bad cubes.
\begin{align*}
&\mathbb{E}_w \sum_{\substack{R \in \mathcal{D}_0 \\ R : \D_w{-bad}}} \int_{(R \cap G_Q) \setminus (H \cap T_w \cap S_0)} \int_{\ell(R)/2}^{\min\{\ell(R),\ell(Q)\}} \int_{\Rn}
\Big(\frac{t}{t+|x-y|}\Big)^{m\lambda} |\theta_t^\sigma f(y)|^2
\frac{d\mu(y) dt}{t^{m+1}} d\sigma(x)\end{align*}\begin{align*}
&\leq \sum_{R \in \mathcal{D}_0} \mathbb{P}\big(\{w \in \Omega; R \ \text{is}\ \D_w{-bad} \}\big)\int_{R \cap G_Q} \int_{\ell(R)/2}^{\min\{\ell(R),\ell(Q)\}} \int_{\Rn}
\Big(\frac{t}{t+|x-y|}\Big)^{m\lambda} \\&\quad\times|\theta_t^\sigma f(y)|^2
\frac{d\mu(y) dt}{t^{m+1}} d\sigma(x)\\
&\leq \frac{\tau}{2} \int_{G_Q} \int_{0}^{\ell(Q)} \int_{\Rn} \Big(\frac{t}{t+|x-y|}\Big)^{m\lambda} |\theta_t^\sigma f(y)|^2 \frac{d\mu(y) dt}{t^{m+1}} d\sigma(x)
=\frac{\tau}{2} \big\| \mathbf{1}_{G_Q} g_{\lambda,\sigma,Q}^* \big\|_{L^2(\sigma)}^2,
\end{align*}
where we used the fact (see \cite{NTV2003})
$$
\mathbb{P}\big(\{w\in \Omega; R \ \text{is}\ \D_w-bad \}\big) \leq \tau/2.
$$

Since $\big\| \mathbf{1}_{G_Q} g_{\lambda,\sigma,Q}^* \big\|_{L^2(\sigma)} < \infty$, we deduce that
\begin{align*}
&\big\| \mathbf{1}_{G_Q} g_{\lambda,\sigma,Q}^* \big\|_{L^2(\sigma)}^2 \\
&\lesssim \mathbb{E}_w \sum_{\substack{R \in \mathcal{D}_0 \\ R : \D_w{-good}}} \int_{(R \cap G_Q) \setminus (H \cap T_w \cap S_0)} \int_{\ell(R)/2}^{\min\{\ell(R),\ell(Q)\}}
\int_{\Rn} \Big(\frac{t}{t+|x-y|}\Big)^{m\lambda}\\&\quad\times |\theta_t^\sigma f(y)|^2
\frac{d\mu(y) dt}{t^{m+1}} d\sigma(x)\\
&=\mathbb{E}_w \sum_{\substack{R \in \mathcal{D}_0 \\ R : \D_w{-good} \\ R \not\subset H \cup T_w}} \int_{R} \int_{\ell(R)/2}^{\min\{\ell(R),\ell(Q)\}} \int_{\Rn}
\Big(\frac{t}{t+|x-y|}\Big)^{m\lambda} |\widetilde{\theta}_t^\sigma f(y)|^2
\frac{d\mu(y) dt}{t^{m+1}} d\sigma(x),
\end{align*}
where $\widetilde{\theta}_t^\sigma f(y)=\int_{\Rn} \widetilde{s}_{t}(y,z)f(z)d\sigma(z)$
and $\widetilde{s}_t(y,z)=s_t(y,z) \mathbf{1}_{\Rn \setminus S_0}(x)$. It is easy to check that $\widetilde{s}_t$ satisfies the Size condition $(1)$ and H\"{o}lder conditions $(2)$.

From now on, $w$ is fixed, simply denote $\mathcal{D}=\D_w$ and $T=T_w$. It is enough to prove that
\begin{equation}\label{Reduction-1}
\sum_{\substack{R \in \mathcal{D}_0 \\ R : \mathcal{D}{-good} \\ R \not\subset H \cup T}} \int_{R} \int_{\ell(R)/2}^{\min\{\ell(R),\ell(Q)\}} \int_{\Rn}
\Big(\frac{t}{t+|x-y|}\Big)^{m\lambda} |\widetilde{\theta}_t^\sigma f(y)|^2
\frac{d\mu(y) dt}{t^{m+1}} d\sigma(x)
\lesssim ||f||_{L^2(\sigma)}^2.
\end{equation}
\qed
%%%%%%%%%%%%%%%%%%%%%%%%%%%%%%%%%%%%%%%%%%%%%%%%%%%%%%%%%%%%%%%
\subsubsection{\textbf{Martingale difference operators.}}
To get further reduction, we introduce $b$-adapted martingales only in the transit cubes $P \in \mathcal{D}^{tr} = \mathcal{D}_w^{tr}$, which is defined by
$$
\mathcal{D}^{tr}_w := \big\{P \in \mathcal{D}_w;\ \sigma(P) \neq 0 \text{ and } P \not\subset H \cup T_w \big\}.
$$
Without loss of generality, we may assume that $\supp b \subset Q$ and $\supp f \subset Q$.
We introduce the notations :
$$
\mathbb{E}_{P_0}f(x) = \frac{\langle f \rangle_{P_0}}{\langle b \rangle_{P_0}} b(x), \
\langle f \rangle_{E} = \frac{1}{\sigma(E)}\int_E f d\sigma, \ \text{if} \ \sigma(E)\neq 0,
$$
where $P_0=Q^*_w \in \D^{tr}$. Then for any cube $P \in \D^{tr}$, the martingale difference operators $\Delta_P f$ are defined by:
$$
\Delta_P f = \sum_{P' \in ch(P)} A_{P'}(f) \mathbf{1}_{P'},
$$
where
\begin{equation*}
A_{P'}(f) =
\begin{cases}
\Big(\frac{\langle f \rangle_{P'}}{\langle b \rangle_{P'}}- \frac{\langle f \rangle_{P}}{\langle b \rangle_{P}}\Big)b \ \ & \text{if} \ P' \in \mathcal{D}^{tr}, \\
f - \frac{\langle f \rangle_{P}}{\langle b \rangle_{P}}b \ & \text{if} \ P' \not\in \mathcal{D}^{tr}.
\end{cases}
\end{equation*}
For convenience, we set $\Delta_{P_0}f$ to be $\Delta_{P_0} f + \mathbb{E}_{P_0}f$ at the largest level $P_0$.
Then there holds that (see Lemma 5.11 \cite{Tolsa})
$$
f=\sum_{P \in \mathcal{D}^{tr}} \Delta_P f, \ \
\sum_{P \in \mathcal{D}^{tr}} \big\| \Delta_P f \big\|_{L^2(\sigma)}^2 \lesssim \big\| f \big\|_{L^2(\sigma)}^2 .
$$
%%%%%%%%%%%%%Proof of the above decomposition%%%%%%%%%%%%%%%%%%%%%%%%%%%%%%%
%%%%%%%%%%%%%%%%%%%%%%%%%%%%%%%%%%%%%%%%%%%%%%%%%%%%%%%%%%%%%%%%%%%%%%%%%%%%
Applying the above expansion of $f$, the inequality $(\ref{Reduction-1})$ turns into the following inequality
\begin{equation}\label{Reduction-2}
\sum_{\substack{R \in \mathcal{D}^{tr}_0 \\ R : \mathcal{D}{-good}}} \int_{R} \int_{\ell(R)/2}^{\min\{\ell(R),\ell(Q)\}} \int_{\Rn} \big(\frac{t}{t+|x-y|}\big)^{m\lambda} \big|
\sum_{P \in \mathcal{D}^{tr}}\widetilde{\theta}_t^\sigma (\Delta_P f)(y)\big|^2 \frac{d\mu(y) dt}{t^{m+1}} d\sigma(x)
\lesssim ||f||_{L^2(\sigma)}^2.
\end{equation}

%%%%%%%%%%%%%%%%%%%%%%%%%%%%%%%%%%%%%%%%%%%%%%%%%%%%%%%%%%%%%%%%%
\subsection{Main estimates.}
In this section, we shall give the demonstration of the inequality $(\ref{Reduction-2})$. For a fixed cube
$R \in \D_0^{tr} \cap \D-{good}$, we begin by splitting the transit cubes $P \in \mathcal{D}^{tr}$ into four different cases:
\begin{enumerate}
\item [$\bullet$] Less : $\ell(P) < \ell(R)$;
\vspace{0.2cm}
\item [$\bullet$] Separated : $\ell(P) \geq \ell(R)$, $d(P,R) > \ell(R)^{\gamma} \ell(P)^{1-\gamma}$;
\vspace{0.2cm}
\item [$\bullet$] Nearby : $\ell(R) \leq \ell(P) \leq 2^r \ell(R)$, $d(P,R) \leq \ell(R)^{\gamma}\ell(P)^{1-\gamma}$;
\vspace{0.2cm}
\item [$\bullet$] Inside : $\ell(P) > 2^r \ell(R)$, $d(P,R) \leq \ell(R)^{\gamma} \ell(P)^{1-\gamma}$.
\end{enumerate}
Thus, the left hand side of $(\ref{Reduction-2})$ is dominated by the correspondingly four pieces, which are denoted by
$\Sigma_{less}$,$\Sigma_{sep}$,$\Sigma_{near}$ and $\Sigma_{in}$ . We will treat these four terms respectively.

%%%%%%%%%%%%%%%%%%%%%%%%%%%%%%%%%%%%%%%%%%%%%%%%%%%%%%%%%%%%%%
\subsubsection{\textbf{Less part.}}\label{sec-less}
Before starting the proof, we first present two key lemmas.
%%%%%%%%%%%%%%%%%%%%%%%%%%%%%%%%%%%%%%%%%%%%%%%%%%%%%%%%%%%
\begin{lemma}[\cite{Tolsa}]\label{A-QR}
Denote $$A_{QR}=\frac{\ell(Q)^{\alpha/2} \ell(R)^{\alpha/2}}{D(Q,R)^{m+\alpha}} \sigma(Q)^{1/2} \sigma(R)^{1/2},$$
where $D(Q,R)=\ell(Q) + \ell(R) + d(Q,R)$, $Q \in \mathcal{D}^{tr}$, $R \in \mathcal{D}^{tr}_0$ and $\alpha > 0$. Then for every
$x_Q \geq 0$, there holds that
$$
\sum_{R \in \mathcal{D}^{tr}_0} \Big( \sum_{Q \in \mathcal{D}^{tr}} A_{QR} x_{Q} \Big)^2
\lesssim \sum_{Q \in \mathcal{D}^{tr}_0} x_{Q}^2.
$$
\end{lemma}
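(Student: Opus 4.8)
The plan is to read Lemma~\ref{A-QR} as the statement that the infinite matrix $A=(A_{QR})$ is bounded on $\ell^2$, and to prove it by a Schur-type bilinear form argument. By duality it suffices to show that
$$\sum_{Q\in\mathcal{D}^{tr}}\sum_{R\in\mathcal{D}^{tr}_0}A_{QR}\,x_Q\,y_R\ \lesssim\ \Big(\sum_Q x_Q^2\Big)^{1/2}\Big(\sum_R y_R^2\Big)^{1/2}$$
for all nonnegative sequences $(x_Q)$, $(y_R)$. Abbreviating $K_{QR}:=\ell(Q)^{\alpha/2}\ell(R)^{\alpha/2}D(Q,R)^{-(m+\alpha)}$, so that $A_{QR}=K_{QR}\,\sigma(Q)^{1/2}\sigma(R)^{1/2}$, I would split the summand in the \emph{crossed} fashion
$$A_{QR}x_Qy_R=\big(K_{QR}^{1/2}\sigma(R)^{1/2}x_Q\big)\big(K_{QR}^{1/2}\sigma(Q)^{1/2}y_R\big)$$
and apply Cauchy--Schwarz in the pair index $(Q,R)$. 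This bounds the left-hand side by
$$\Big(\sum_Q x_Q^2\sum_{R}K_{QR}\sigma(R)\Big)^{1/2}\Big(\sum_R y_R^2\sum_{Q}K_{QR}\sigma(Q)\Big)^{1/2},$$
so the entire lemma reduces to the single uniform estimate
$$(\star)\qquad \sum_{P}\frac{\ell(P_0)^{\alpha/2}\ell(P)^{\alpha/2}}{D(P_0,P)^{m+\alpha}}\,\sigma(P)\ \lesssim\ 1,$$
valid uniformly over dyadic cubes $P_0$, where the sum runs over a dyadic family (such as $\mathcal{D}^{tr}$ or $\mathcal{D}^{tr}_0$) whose cubes are pairwise disjoint within each generation; one then applies $(\star)$ once with $P_0=Q$ and once with $P_0=R$.

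To prove $(\star)$ I would decompose $P$ according to its scale and its distance from $P_0$. Fix $j\in\Z$ and consider the cubes $P$ with $\ell(P)=2^{j}\ell(P_0)$; for $i\ge0$ collect those with $d(P_0,P)\le 2^{i}\max\{\ell(P_0),\ell(P)\}$, on which block $D(P_0,P)\simeq 2^{i}\max\{\ell(P_0),\ell(P)\}$. Each cube of this block lies in a fixed ball centred near $P_0$ of radius comparable to $2^{i}\max\{\ell(P_0),\ell(P)\}$, and since these cubes are disjoint, the power growth $\sigma(B(x,r))\lesssim r^m$ yields $\sum_P\sigma(P)\lesssim\big(2^{i}\max\{\ell(P_0),\ell(P)\}\big)^m$. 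Substituting this in, the block contributes $\lesssim 2^{-i\alpha}\,2^{-|j|\alpha/2}$; the cases $j\ge0$ and $j<0$ are handled in the same manner, the decay in $|j|$ coming from the imbalance between the numerator $\ell(P_0)^{\alpha/2}\ell(P)^{\alpha/2}$ and the factor $\max\{\ell(P_0),\ell(P)\}^{m+\alpha}$ in the denominator. Summing the resulting geometric series over $i\ge0$ and $j\in\Z$ gives $(\star)$, and hence the lemma.

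The point that requires care --- and the reason for the crossed Cauchy--Schwarz split --- is that one must never estimate the number of dyadic cubes of a given scale inside a ball by the Euclidean (volume) count; for a non-doubling $\sigma$ this would produce a divergent sum at fine scales. Routing the measure of the \emph{summation} variable into each of the two resulting sums (so that $\sigma(R)$ accompanies the sum over $R$ and $\sigma(Q)$ the sum over $Q$) forces every summation over a family of disjoint cubes to be dominated by $\sigma$ of a single ball, which is precisely what the $m$-growth of $\sigma$ provides; everything else is elementary. Since $(\star)$ uses only that the summed family is dyadic and generation-wise disjoint, the fact that $Q$ and $R$ belong to the two different grids $\mathcal{D}^{tr}$ and $\mathcal{D}^{tr}_0$ causes no difficulty.
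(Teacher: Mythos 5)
Your proof is correct, and since the paper gives no proof of Lemma~\ref{A-QR} (it is quoted from Tolsa's book), there is nothing to diverge from: your crossed Cauchy--Schwarz splitting is exactly the Schur test with weights $\sigma(\cdot)^{1/2}$, and the reduction to the uniform row-sum bound $(\star)$ followed by the scale-and-distance block decomposition is the standard argument found in the cited source. The one point you state too casually is the growth bound $\sigma(B(x,r))\lesssim r^m$: the measure $\sigma$ of Theorem~\ref{Theorem-big} is not globally power bounded, but every enclosing ball in your blocks contains the transit cube $P_0\not\subset H$, so hypothesis (b) of that theorem (a ball with $\sigma(B_r)>C_0r^m$ must lie inside $H$) supplies exactly the estimate you need, and the argument stands.
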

%%%%%%%%%%%%%%%%%%%%%%%%%%%%%%%%%%%%%%%%%%%%%%%%%%%%%%
\begin{lemma}\label{estimate-1}
Let $0 < \alpha \leq m(\lambda - 2)/2$. Suppose that $\ell(P) < \ell(R)$ and $(x,t) \in W_R$. Then it holds that
\begin{equation*}
\bigg( \int_{\Rn} |\widetilde{\theta}_t^{\sigma}(\Delta_P f)(y)|^2 \Big(\frac{t}{t+|x-y|}\Big)^{m\lambda}\frac{d\mu(y)}{t^m}\bigg)^{1/2}
\lesssim A_{PR} \sigma(R)^{-1/2} \big\| \Delta_P f \big\|_{L^2(\sigma)}.
\end{equation*}
\end{lemma}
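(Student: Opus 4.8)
The plan is to exploit the key cancellation property of the martingale difference operator, namely that $\Delta_P f$ has $b$-weighted mean zero on $P$ and is supported on $P$, together with the H\"older smoothness of $\widetilde{s}_t$ in its $y$-variable in order to gain a factor $\ell(P)^{\alpha}$, and then to use the size condition to convert the remaining $(x,t)\in W_R$ dependence into the negative power of $D(P,R)$ appearing in $A_{PR}$. More precisely, since $\int \widetilde{s}_t(y,z)(\Delta_P f)(z)\,d\sigma(z)$ and $\Delta_P f$ integrates against $b$ to zero on each child of $P$ (equivalently, one subtracts a suitable constant times $b$), I would write, for a fixed reference point $z_P\in P$ (say its center),
$$
\widetilde{\theta}_t^{\sigma}(\Delta_P f)(y)=\int_{P}\bigl(\widetilde{s}_t(y,z)-\widetilde{s}_t(y,z_P)\bigr)(\Delta_P f)(z)\,d\sigma(z),
$$
using that the subtracted term pairs to zero with $\Delta_P f$ on $P$. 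Here I must be slightly careful: the cancellation is childwise, so the honest argument subtracts $\widetilde{s}_t(y,z_{P'})$ on each child $P'$; since $\ell(P')\simeq\ell(P)$ the estimate is unaffected, and I will phrase it this way.

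Next I would estimate the kernel difference. Because $(x,t)\in W_R$ means $t\simeq\ell(R)>\ell(P)\geq\ell(P')\cdot\text{const}$, and $|z-z_{P'}|\lesssim\ell(P)<t$, the H\"older condition (2) (which $\widetilde{s}_t$ inherits) applies and gives
$$
|\widetilde{s}_t(y,z)-\widetilde{s}_t(y,z_{P'})|\lesssim\frac{\ell(P)^{\alpha}}{(t+|y-z|)^{m+\alpha}}\lesssim\frac{\ell(P)^{\alpha}}{(t+|y-z_P|)^{m+\alpha}},
$$
the last step because $|z-z_P|\lesssim\ell(P)\lesssim t$. Plugging this in and applying Cauchy--Schwarz in $z$ over $P$ yields
$$
|\widetilde{\theta}_t^{\sigma}(\Delta_P f)(y)|\lesssim\frac{\ell(P)^{\alpha}\sigma(P)^{1/2}}{(t+|y-z_P|)^{m+\alpha}}\|\Delta_P f\|_{L^2(\sigma)}.
$$
Substituting this bound into the left-hand side, it remains to control
$$
\ell(P)^{\alpha}\sigma(P)^{1/2}\|\Delta_P f\|_{L^2(\sigma)}\Bigl(\int_{\Rn}\frac{1}{(t+|y-z_P|)^{2m+2\alpha}}\Bigl(\frac{t}{t+|x-y|}\Bigr)^{m\lambda}\frac{d\mu(y)}{t^m}\Bigr)^{1/2}.
$$

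The remaining work is to show that the integral factor is $\lesssim \ell(R)^{-m-2\alpha}\,\ell(R)^{\alpha}/D(P,R)^{\,?}$ matching $A_{PR}$; more precisely, recalling $t\simeq\ell(R)$, $D(P,R)\simeq\ell(R)+d(P,R)$ since $\ell(P)<\ell(R)$, and (crucially) that $z_P$ and $x\in R$ are at distance comparable to $D(P,R)$ from each other, I would split the $y$-integral into the region $|y-x|\leq D(P,R)/2$ (where $|y-z_P|\gtrsim D(P,R)$, so the first factor is $\lesssim D(P,R)^{-2m-2\alpha}$ and one integrates $(\frac{t}{t+|x-y|})^{m\lambda}t^{-m}\,d\mu(y)\lesssim 1$ using the power bound and $m\lambda>m$) and the complementary region $|y-x|>D(P,R)/2$ (where $(\frac{t}{t+|x-y|})^{m\lambda}\leq(\frac{t}{D(P,R)})^{m\lambda}$, and one uses $m\lambda\geq 2m+2\alpha$, exactly the hypothesis $\alpha\leq m(\lambda-2)/2$, together with $\int (t+|y-z_P|)^{-2m-2\alpha}d\mu(y)\lesssim t^{-m-2\alpha}$, to absorb powers and land on $D(P,R)^{-2m-2\alpha}$ up to $t$-powers that combine favorably). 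Collecting the two regions gives the factor $\lesssim D(P,R)^{-m-\alpha}\ell(R)^{\alpha/2}\,\ell(R)^{-1/2}\cdot(\text{const})$; combined with $\ell(P)^{\alpha}\sigma(P)^{1/2}$ and a clean $\ell(P)^{\alpha/2}$-versus-$\ell(P)^{\alpha}$ bookkeeping (we only need $\ell(P)^{\alpha/2}$ in $A_{PR}$, so the extra $\ell(P)^{\alpha/2}\leq\ell(R)^{\alpha/2}$ is harmless and in fact helpful), this reproduces exactly $A_{PR}\,\sigma(R)^{-1/2}\|\Delta_P f\|_{L^2(\sigma)}$.

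The main obstacle I anticipate is the careful splitting of the $y$-integral and tracking the exact powers of $t\simeq\ell(R)$, $\ell(P)$, and $D(P,R)$ so that the $g_\lambda^*$ weight $(\frac{t}{t+|x-y|})^{m\lambda}$ is genuinely used (this is where $\lambda>2$ and the hypothesis $\alpha\leq m(\lambda-2)/2$ enter, via $m\lambda\geq 2m+2\alpha$); in the plain $g$-function case this weight is absent and the estimate is more transparent. A secondary technical point is to handle the childwise nature of the cancellation of $\Delta_P f$ rigorously while keeping $\ell(P')\simeq\ell(P)$, which is routine but must be stated.
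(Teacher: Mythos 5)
Your proposal follows essentially the same route as the paper's proof: exploit the vanishing mean of $\Delta_P f$ together with the H\"older condition to gain the factor $\ell(P)^{\alpha}$, then split the $y$-integral according to the distance from $x$, handle the near region by the lower bound on $|y-z|$ and the far region by the decay of the weight $\bigl(\tfrac{t}{t+|x-y|}\bigr)^{m\lambda}$ via $m\lambda\geq 2m+2\alpha$, and finish with $\|\Delta_P f\|_{L^1(\sigma)}\leq\sigma(P)^{1/2}\|\Delta_P f\|_{L^2(\sigma)}$ and $\ell(P)^{\alpha}\leq\ell(P)^{\alpha/2}\ell(R)^{\alpha/2}$. (The paper keeps the $z$-integral and uses Young's inequality where you collapse it to $z_P$ by Cauchy--Schwarz; this is immaterial.) Two points in your write-up need correcting, though both are repairable.

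First, your ``honest'' childwise correction is backwards. For these $b$-adapted martingale differences one has, summing the contributions of all children, $\int_P\Delta_P f\,d\sigma=\int_P f\,d\sigma-\tfrac{\langle f\rangle_P}{\langle b\rangle_P}\int_P b\,d\sigma=0$ (valid since $\ell(P)<\ell(R)\leq\ell(P_0)$ forces $P\neq P_0$), whereas $\int_{P'}\Delta_P f\,d\sigma$ is in general nonzero on an individual child $P'$. So subtracting the single constant $\widetilde{s}_t(y,z_P)$ over all of $P$ is exactly the legitimate move (and is what the paper does), while subtracting $\widetilde{s}_t(y,z_{P'})$ child by child would leave an uncancelled term $\sum_{P'}\widetilde{s}_t(y,z_{P'})\int_{P'}\Delta_P f\,d\sigma$.

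Second, the claim you label crucial --- that $|x-z_P|\simeq D(P,R)$ for $x\in R$ --- is false in the ``Less'' case, where no separation is assumed: $P$ may be adjacent to or contained in $R$, so $d(P,R)$ can vanish while $D(P,R)\simeq\ell(R)$. Your analysis of the region $|y-x|\leq D(P,R)/2$ therefore breaks down there. The fix is one line: when $d(P,R)\lesssim\ell(R)$ one has $D(P,R)\simeq\ell(R)\simeq t$, so $(t+|y-z_P|)^{-2m-2\alpha}\leq t^{-2m-2\alpha}\simeq D(P,R)^{-2m-2\alpha}$ and the remaining integral of the weight is $\lesssim 1$; the splitting is only needed when $d(P,R)\gtrsim\ell(R)$, where your lower bound $|y-z_P|\gtrsim D(P,R)$ does hold. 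The paper sidesteps this by splitting at $|y-x|=d(P,R)/2$ and using $\ell(R)+d(P,R)\simeq D(P,R)$.
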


\begin{proof}
Let $z_P$ be the center of $P$. Since $\ell(P) < \ell(R) \leq \ell(P_0)$, we get the vanishing property
$\int_P \Delta_P f d\sigma =0$. Thus, we have
$$|\widetilde{\theta}_t(\Delta_P f)(y)|
= \bigg|\int_{P}[s_t(y,z)-s_t(y,z_P)] \Delta_P f(z) d\sigma(z)\bigg|
\lesssim \int_{P} \frac{\ell(P)^\alpha}{(t+|y-z|)^{m+\alpha}}|\Delta_P f(z)| d\sigma(z). $$
For $x \in R$ and $z \in P$, $|x-z|\geq d(P,R)$. We will consider two subcases.

First, we analyze the contribution of the subregion in which $y:|y-x|\leq \frac12 d(P,R)$.
In this case, $|y-z| \geq |x-z|-|x-y| \gtrsim d(P,R)$ and $\ell(R) + d(P,R) \simeq  D(P,R)$.
Thus, it follows that
$$|\widetilde{\theta}_t^\sigma(\Delta_P f)(y)|
\lesssim \frac{\ell(P)^\alpha}{D(P,R)^{m+\alpha}} \big\| \Delta_P f \big\|_{L^1(\sigma)}¡£$$
Therefore,
\begin{align*}
\bigg( \int_{y:|y-x|\leq \frac12 d(P,R)} |\widetilde{\theta}_t^\sigma(\Delta_P f)(y)|^2 \Big(\frac{t}{t+|x-y|}\Big)^{m\lambda}\frac{d\mu(y)}{t^m}\bigg)^{1/2}
\lesssim A_{PR}\ \sigma(R)^{-1/2} \big\| \Delta_P f \big\|_{L^2(\sigma)}.
\end{align*}

Secondly, we treat the contribution made by those $y:|y-x| > \frac12 d(P,R)$. Since $|y-x| > \frac12 d(P,R)$, we have
$$ \frac{t}{t+|x-y|} \lesssim \frac{\ell(R)}{\ell(R)+d(P,R)}.$$
Accordingly, together with size condition and Young's inequality, we conclude that
\begin{align*}
&\bigg(\int_{y:|y-x|> \frac12 d(P,R)} |\widetilde{\theta}_t^\sigma(\Delta_P f)(y)|^2 \Big(\frac{t}{t+|x-y|}\Big)^{m\lambda}\frac{d\mu(y)}{t^m}\bigg)^{1/2} \\
&\lesssim \frac{\ell(R)^{\frac{m\lambda}{2}-m}}{(\ell(R)+d(P,R))^{\frac{m\lambda}{2}}}  t^{-m/2}
\bigg[\int_{\Rn}\bigg(\int_{\Rn}\frac{\ell(P)^{\alpha}}{(t+|y-z|)^{m+\alpha}} |\Delta_P f(z)| d\sigma(z)\bigg)^2 d\sigma(y)\bigg]^{1/2} \\
&\leq \frac{\ell(P)^\alpha \ell(R)^\alpha}{(\ell(R)+d(P,R))^{m+\alpha}} t^{-m/2}  \Big\|\frac{1}{(t+|\cdot|)^{m+\alpha}}\Big\|_{L^2(\mu)} \big\|\Delta_P f \big\|_{L^1(\sigma)} \\
&\lesssim \frac{\ell(P)^\alpha}{D(P,R)^{m+\alpha}} \big\| \Delta_P f \big\|_{L^1(\sigma)}
\leq A_{PR}\ \sigma(R)^{-1/2} \big\| \Delta_P f \big\|_{L^2(\sigma)},
\end{align*}
where we have used the condition $0 < \alpha \leq m(\lambda - 2)/2$.

The proof of Lemma $\ref{estimate-1}$ is completed.
\end{proof}

%%%%%%%%%%%%%%%%%%%%%%%%%%%%%%%%%%%%%%%%%%%%%%%%%%%%%%%%%%%%%%%
By the H\"{o}lder inequality, Lemma $(\ref{estimate-1})$ and Lemma $\ref{A-QR}$, it follows that
\begin{align*}
\Sigma_{less}
&\leq \sum_{\substack{R \in \mathcal{D}^{tr}_0 \\ R : \mathcal{D}-good \\ \ell(R) \leq 2^s}} \iint_{W_R}
\bigg[\sum_{\substack{P \in \mathcal{D}^{tr} \\ \ell(P) < \ell(R)}} \bigg(\int_{\Rn} |\widetilde{\theta}_t^\sigma(\Delta_P f)(y)|^2
\Big(\frac{t}{t+|x-y|}\Big)^{m\lambda}\frac{d\mu(y)}{t^m}\bigg)^{1/2}\bigg]^2 d\sigma(x) \frac{dt}{t} \\
&\lesssim \sum_{R \in \mathcal{D}^{tr}_0}
\bigg(\sum_{P \in \mathcal{D}^{tr}} A_{PR} \big\| \Delta_P f \big\|_{L^2(\sigma)}\bigg)^2
\lesssim \sum_{P \in \mathcal{D}^{tr}} \big\| \Delta_P f \big\|_{L^2(\sigma)}^2
\lesssim \big\| f \big\|_{L^2(\sigma)}^2 .
\end{align*}

%%%%%%%%%%%%%%%%%%%%%%%%%%%%%%%%%%%%%%%%%%%%%%%%%%%%%%%%%%%%%%%%%%%
\subsubsection{\textbf{Separated part.}}\label{sec-sep}
Similar to the proof in the preceding subsection, it is sufficient to establish the following lemma.
\begin{lemma}\label{estimate-2}
Let $0 < \alpha \leq m(\lambda - 2)/2$. Assume that $P \in \D^{tr}$ and $R \in \D_0^{tr} \cap \D-good$ are given cubes satisfying $\ell(P) \geq \ell(R)$, $d(P,R) > \ell(R)^{\gamma}
\ell(P)^{1-\gamma}$. Then for any $(x,t) \in W_R$ we have
\begin{equation*}
\bigg( \int_{\Rn} |\widetilde{\theta}_t^\sigma(\Delta_P f)(y)|^2 \Big(\frac{t}{t+|x-y|}\Big)^{m\lambda}\frac{d\mu(y)}{t^m}\bigg)^{1/2}
\lesssim A_{PR}\ \sigma(R)^{-1/2} \big\| \Delta_P f \big\|_{L^2(\sigma)}.
\end{equation*}
\end{lemma}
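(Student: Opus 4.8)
The plan is to mirror the structure of the proof of Lemma~\ref{estimate-1}, exploiting the vanishing property $\int_P \Delta_P f \, d\sigma = 0$ (valid since $P \in \D^{tr}$ is a transit cube on which $\Delta_P f$ is a martingale difference) together with the H\"older condition on the kernel. Since now $\ell(P) \geq \ell(R)$, the roles of $P$ and $R$ are reversed compared with the Less part: the natural smoothing scale is $\ell(R)$ rather than $\ell(P)$, so we subtract off the value of $s_t(y,\cdot)$ at a point of $R$. Concretely, fix $(x,t) \in W_R$, so $t \simeq \ell(R)$; the separation hypothesis $d(P,R) > \ell(R)^\gamma \ell(P)^{1-\gamma}$ combined with $R$ being $\D$-good will guarantee that $d(P,R)$ dominates $\ell(R) = 2t$, so that for $z \in P$ we may legitimately use the H\"older estimate in the $z$-variable at scale $t$ (the condition $|z - z_R| < t/2$ type requirement is met after the good/separated geometry is unwound). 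One gets
\[
|\widetilde\theta_t^\sigma(\Delta_P f)(y)|
= \Bigl| \int_P \bigl[ s_t(y,z) - s_t(y, z_R) \bigr] \Delta_P f(z)\, d\sigma(z) \Bigr|
\lesssim \int_P \frac{\ell(R)^\alpha}{(t + |y - z|)^{m+\alpha}} |\Delta_P f(z)| \, d\sigma(z),
\]
where $z_R$ is the center of $R$ (or of $P$, using symmetry of $D(P,R)$).

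Next I would split the $y$-integral into the region $\{ |y - z_R| \leq \tfrac12 d(P,R) \}$ and its complement, exactly as in Lemma~\ref{estimate-1}. On the near region, $|y - z| \gtrsim d(P,R)$ for $z \in P$, and $\ell(P) + d(P,R) \simeq D(P,R)$ (here we use $\ell(P) \geq \ell(R)$), so $|\widetilde\theta_t^\sigma(\Delta_P f)(y)| \lesssim \ell(R)^\alpha D(P,R)^{-m-\alpha} \|\Delta_P f\|_{L^1(\sigma)}$; integrating against $(t/(t+|x-y|))^{m\lambda} t^{-m} d\mu(y)$ over $\Rn$ gives a bounded factor, and then Cauchy--Schwarz turns $\|\Delta_P f\|_{L^1(\sigma)}$ into $\sigma(P)^{1/2}\|\Delta_P f\|_{L^2(\sigma)}$, producing
\[
\frac{\ell(R)^\alpha}{D(P,R)^{m+\alpha}} \sigma(P)^{1/2} \|\Delta_P f\|_{L^2(\sigma)}
= \frac{\ell(R)^{\alpha/2}}{\ell(P)^{\alpha/2}} \cdot A_{PR}\, \sigma(R)^{-1/2} \|\Delta_P f\|_{L^2(\sigma)},
\]
and since $\ell(R) \leq \ell(P)$ the extra factor is $\leq 1$, giving the claimed bound. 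On the far region $|y - x| > \tfrac12 d(P,R)$ (using $x \in R$, $|y - x| \simeq |y - z_R|$), we have $t/(t+|x-y|) \lesssim \ell(R)/(\ell(R) + d(P,R))$, so the Littlewood-Paley weight supplies $\bigl(\ell(R)/(\ell(R)+d(P,R))\bigr)^{m\lambda/2 - m}$; pulling this out and applying Young's inequality to the remaining convolution $\bigl\| (t+|\cdot|)^{-m-\alpha} \bigr\|_{L^2(\mu)} \lesssim t^{-m/2 - \alpha} t^{m/2} = t^{m/2-\alpha}$ (times $\|\Delta_P f\|_{L^1(\sigma)}$), and using $t \simeq \ell(R)$ and the hypothesis $0 < \alpha \leq m(\lambda-2)/2$ (so that $m\lambda/2 - m \geq m + \alpha$, hence enough decay in $d(P,R)$ remains), one again arrives at $\ell(R)^\alpha D(P,R)^{-m-\alpha}\|\Delta_P f\|_{L^1(\sigma)} \lesssim A_{PR}\, \sigma(R)^{-1/2}\|\Delta_P f\|_{L^2(\sigma)}$ after Cauchy--Schwarz.

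With Lemma~\ref{estimate-2} in hand, the bound on $\Sigma_{sep}$ follows verbatim as in the Less part: apply the H\"older (Cauchy--Schwarz in $P$) inequality inside $\iint_{W_R}$, integrate $dt/t$ over $W_R$ (which contributes an $O(1)$ factor since the integrand is already $t$-independent up to the $t \simeq \ell(R)$ normalization absorbed into $A_{PR}$), and then invoke Lemma~\ref{A-QR} with $x_P = \|\Delta_P f\|_{L^2(\sigma)}$ to get $\Sigma_{sep} \lesssim \sum_{P \in \D^{tr}} \|\Delta_P f\|_{L^2(\sigma)}^2 \lesssim \|f\|_{L^2(\sigma)}^2$. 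I expect the main obstacle to be the far-region estimate: one must be careful that the power of $\ell(R)/(\ell(R)+d(P,R))$ extracted from the $g^*_\lambda$ weight is genuinely large enough — this is precisely where $\lambda > 2$ and $\alpha \leq m(\lambda-2)/2$ enter — and that after the Young's inequality step the surviving decay in $D(P,R)$ is exactly $m + \alpha$, no less, so that the $A_{PR}$ with its $\ell(P)^{\alpha/2}\ell(R)^{\alpha/2}$ numerator is reproduced without loss; the goodness/separation geometry ($d(P,R) \gtrsim \ell(R)$) must also be checked to justify the H\"older estimate at scale $t \simeq \ell(R)$ in the first place.
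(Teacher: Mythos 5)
Your overall architecture (near/far split in $y$, Young's inequality on the far piece, Cauchy--Schwarz to pass from $\|\Delta_P f\|_{L^1(\sigma)}$ to $\sigma(P)^{1/2}\|\Delta_P f\|_{L^2(\sigma)}$, then Lemma~\ref{A-QR}) matches the paper, but there is a genuine gap at the central step. What the geometry actually gives on the near region is $t+|y-z|\gtrsim \ell(R)+d(P,R)$ (since $t\simeq\ell(R)$ and $|y-z|\gtrsim d(P,R)$), hence
$$
|\widetilde{\theta}_t^\sigma(\Delta_P f)(y)|\lesssim \frac{\ell(R)^\alpha}{(\ell(R)+d(P,R))^{m+\alpha}}\,\|\Delta_P f\|_{L^1(\sigma)},
$$
and the far region likewise produces $\ell(R)^{m\lambda/2-m}(\ell(R)+d(P,R))^{-m\lambda/2}\le \ell(R)^\alpha(\ell(R)+d(P,R))^{-m-\alpha}$ using $\alpha\le m(\lambda-2)/2$. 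You instead assert the denominator $D(P,R)^{m+\alpha}\simeq(\ell(P)+d(P,R))^{m+\alpha}$, i.e.\ the bound $\ell(R)^\alpha D(P,R)^{-m-\alpha}\|\Delta_Pf\|_{L^1(\sigma)}$. That is strictly stronger (by the factor $(\ell(P)/\ell(R))^{\alpha/2}$) than the truth and is false whenever $\ell(R)^\gamma\ell(P)^{1-\gamma}<d(P,R)\ll\ell(P)$, a regime the hypotheses permit because the geometric-mean threshold is much smaller than $\ell(P)$. The whole content of the Separated case is the conversion
$$
\frac{\ell(R)^\alpha}{(\ell(R)+d(P,R))^{m+\alpha}}\lesssim\frac{\ell(P)^{\alpha/2}\ell(R)^{\alpha/2}}{D(P,R)^{m+\alpha}},
$$
which is $(\ref{R-R-P-R})$ in the paper and is the only place where the separation hypothesis $d(P,R)>\ell(R)^\gamma\ell(P)^{1-\gamma}$ with the precise value $\gamma=\alpha/(2(m+\alpha))$ enters (one treats $\ell(P)\le d(P,R)$ and $\ell(P)>d(P,R)$ separately). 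Your write-up invokes separation only to get $d(P,R)\gtrsim\ell(R)$, which is not where it is needed; as written your argument would run unchanged without any separation, which cannot be right since the Nearby configuration requires a different treatment.

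A secondary point: your opening kernel estimate is misjustified. The H\"older condition requires $|z-z_R|<t/2$, whereas for $z\in P$ and $z_R$ the center of $R$ one has $|z-z_R|\ge d(P,R)\ge\ell(R)\ge t$, so it does not apply; moreover the cancellation $\int_P\Delta_Pf\,d\sigma=0$ fails when $P=P_0$. Neither issue is fatal, because no cancellation is needed here: since $t\simeq\ell(R)$, the size condition alone gives $|s_t(y,z)|\lesssim t^\alpha(t+|y-z|)^{-m-\alpha}\simeq\ell(R)^\alpha(t+|y-z|)^{-m-\alpha}$, which is exactly your displayed intermediate bound and is how the paper proceeds.
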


\begin{proof}
We begin by showing that
\begin{equation}\label{R-R-P-R}
\frac{\ell(R)^\alpha}{(\ell(R)+d(P,R))^{m+\alpha}}
\lesssim \frac{\ell(P)^{\alpha/2} \ell(R)^{\alpha/2}}{D(P,R)^{m+\alpha}}.
\end{equation}
Actually, if $\ell(P) \leq d(P,R)$, it is obvious that
\begin{align*}
\frac{\ell(R)^\alpha}{(\ell(R)+d(P,R))^{m+\alpha}}
\lesssim \frac{\ell(R)^\alpha}{D(P,R)^{m+\alpha}}
\leq \frac{\ell(P)^{\alpha/2} \ell(R)^{\alpha/2}}{D(P,R)^{m+\alpha}}.
\end{align*}
If $\ell(P) > d(P,R)$, then $D(P,R) \simeq \ell(P)$. Using $d(P,R) > \ell(R)^{\gamma} \ell(P)^{1-\gamma}$ and
$\gamma = \frac{\alpha}{2(m+\alpha)}$, we obtain
\begin{align*}
\ell(P) = \bigg(\frac{\ell(P)}{\ell(R)}\bigg)^\gamma \ell(R)^\gamma \ell(P)^{1-\gamma}
< \bigg(\frac{\ell(P)}{\ell(R)}\bigg)^\gamma d(P,R),
\end{align*}
and
\begin{align*}
\frac{\ell(R)^\alpha}{(\ell(R)+d(P,R))^{m+\alpha}}
\leq \frac{\ell(R)^\alpha}{d(P,R)^{m+\alpha}}
\leq \frac{\ell(P)^{\alpha/2} \ell(R)^{\alpha/2}}{\ell(P)^{m+\alpha}}
\simeq \frac{\ell(P)^{\alpha/2} \ell(R)^{\alpha/2}}{D(P,R)^{m+\alpha}}.
\end{align*}

Next, we continue with the proof. Using the size condition, we get
$$|\widetilde{\theta}_t^\sigma(\Delta_P f)(y)|
\lesssim \int_{\Rn}\frac{t^\alpha}{(t+|y-z|)^{m+\alpha}} |\Delta_P f(z)|d\sigma(z). $$
A similar arguments as that in Lemma $\ref{estimate-1}$ may yield that
\begin{equation*}
\bigg( \int_{\Rn} |\widetilde{\theta}_t^{\sigma}(\Delta_P f)(y)|^2 \Big(\frac{t}{t+|x-y|}\Big)^{m\lambda}\frac{d\mu(y)}{t^m}\bigg)^{1/2}
\lesssim A_{PR}\ \sigma(R)^{-1/2} \big\| \Delta_P f \big\|_{L^2(\sigma)}.
\end{equation*}
This shows that Lemma $\ref{estimate-2}$ is true.
\end{proof}

%%%%%%%%%%%%%%%%%%%%%%%%%%%%%%%%%%%%%%%%%%%%%%%%%%%%%%%%%%%%%%%%%%
\subsubsection{\textbf{Nearby part.}}\label{sec-near}
In this case, it is trivial that $D(P,R) \simeq \ell(P) \simeq \ell(R)$. Hence, we have
$$
\frac{\ell(R)^\alpha}{(\ell(R)+d(P,R))^{m+\alpha}}
\leq \ell(R)^{-m} \simeq \frac{\ell(P)^{\alpha/2} \ell(R)^{\alpha/2}}{D(P,R)^{m+\alpha}} .
$$
This  parallels with $(\ref{R-R-P-R})$. A completely analogous calculation to that of the preceding section yields that
$\Sigma_{near} \lesssim \big\| f \big\|_{L^2(\sigma)}^2$.
%%%%%%%%%%%%%%%%%%%%%%%%%%%%%%%%%%%%%%%%%%%%%%%%%%%%%%%%%%
\subsubsection{\textbf{Inside part.}}
Let $R^{(k)} \in \mathcal{D}$ be the unique cube for which $\ell(R^{(k)}) = 2^k \ell(R)$ and $R \subset R^{(k)}$. We call $R^{(k)}$ as the $k$ generations older dyadic
ancestor of $R$. In this case, since $R$ is good, it must actually have
$R \subset P$. That is, $P$ is the ancestor of $R$. Then we can write
\begin{align*}
\Sigma_{in}
&=\sum_{\substack{R \in \mathcal{D}_0^{tr}:R \subset P \\ R : \mathcal{D}-good \\ \ell(R) \leq 2^{-r} \ell(P_0)}}
\int_{R}\int_{\ell(R)/2}^{\min\{\ell(R),\ell(Q)\}} \int_{\Rn} \Big|\sum_{k=r+1}^{\log_2 \frac{\ell(P_0)}{\ell(R)}} \widetilde{\theta}_t^{\sigma}(\Delta_{R^{(k)}} f)(y)\Big|^2\\&\quad\times
\Big(\frac{t}{t+|x-y|}\Big)^{m \lambda}\frac{d\mu(y)}{t^m} \frac{dt}{t} d\sigma(x) \\
\end{align*}
Note that on the largest level $P_0$, the operator $\Delta_{P_0}$ is defined by $\Delta_{P_0} + \mathbb{E}_{P_0}$. Hence,
$$ B_{R^{(k-1)}}:=\langle b^{-1} \Delta_{R^{(k)}} f \rangle_{R^{(k-1)}}
=\begin{cases}
\frac{\langle f \rangle_{R^{(k-1)}}}{\langle b \rangle_{R^{(k-1)}}} - \frac{\langle f \rangle_{R^{(k)}}}{\langle b \rangle_{R^{(k)}}}, \ &\text{if}\ r+1 \leq k < \log_2
\frac{\ell(P_0)}{\ell(R)}, \\
\frac{\langle f \rangle_{R^{(k-1)}}}{\langle b \rangle_{R^{(k-1)}}},   \ &\text{if}\ k = \log_2 \frac{\ell(P_0)}{\ell(R)} .
\end{cases}
$$
And hence, we obtain the decomposition
$$ \Delta_{R^{(k)}}f
= - B_{R^{(k-1)}} \mathbf{1}_{(R^{(k-1)})^c} b
  + \sum_{\substack{S \in ch(R^{(k)}) \\ S \neq R^{(k-1)}}} \mathbf{1}_S \Delta_{R^{(k)}}f
  + B_{R^{(k-1)}} b .$$
Observing that
$$ \sum_{k=r+1}^{\log_2 \frac{\ell(P_0)}{\ell(R)}} B_{R^{(k-1)}}
= \frac{\langle f \rangle_{R^{(r)}}}{\langle b \rangle_{R^{(r)}}}.$$
Thus, $\Sigma_{in}$ is dominated
$$ \Sigma_{in} \lesssim \Sigma_{in}^{'} + \Sigma_{in}^{''} + \Sigma_{in}^{'''},$$
where
\begin{align*}
\Sigma_{in}^{'}
&= \sum_{\substack{R \in \mathcal{D}_0^{tr}:R \subset P \\ R : \mathcal{D}-good \\ \ell(R) \leq 2^{-r} \ell(P_0)}}
\int_{R}\int_{\ell(R)/2}^{\min\{\ell(R),\ell(Q)\}} \int_{\Rn} \Big|\sum_{k=r+1}^{\log_2 \frac{\ell(P_0)}{\ell(R)}} B_{R^{(k-1)}} \widetilde{\theta}_t^\sigma(\mathbf{1}_{(R^{(k-1)})^c}
b)(y)\Big|^2 \\&\quad\times\big(\frac{t}{t+|x-y|}\big)^{m \lambda}\frac{d\mu(y)dt}{t^{m+1}} d\sigma(x) ,
\end{align*}
\begin{align*}
\Sigma_{in}^{''}
&= \sum_{\substack{R \in \mathcal{D}_0^{tr}:R \subset P \\ R : \mathcal{D}-good \\ \ell(R) \leq 2^{-r} \ell(P_0)}}
\int_{R}\int_{\ell(R)/2}^{\min\{\ell(R),\ell(Q)\}} \int_{\Rn} \Big|\sum_{k=r+1}^{\log_2 \frac{\ell(P_0)}{\ell(R)}}
\sum_{\substack{S \in ch(R^{(k)}) \\ S \neq R^{(k-1)}}} \widetilde{\theta}_t^\sigma(\mathbf{1}_{S} \Delta_{R^{(k)}}f)(y)\Big|^2\\&\quad\times \big(\frac{t}{t+|x-y|}\big)^{m \lambda}\frac{d\mu(y)dt}{t^{m+1}}
d\sigma(x) ,
\end{align*}
and
\begin{align*}
\Sigma_{in}^{'''}
&= \sum_{\substack{R \in \mathcal{D}_0^{tr}:R \subset P \\ R : \mathcal{D}-good \\ \ell(R) \leq 2^{-r} \ell(P_0)}}
\frac{|\langle f \rangle_{R^{(r)}}|^2}{|\langle b \rangle_{R^{(r)}}|^2}
\int_{R}\int_{\ell(R)/2}^{\min\{\ell(R),\ell(Q)\}} \int_{\Rn} \big|\widetilde{\theta}_t^{\sigma} b(y)\big|^2 \big(\frac{t}{t+|x-y|}\big)^{m \lambda}\frac{d\mu(y)}{t^m}\frac{dt}{t}
d\sigma(x) .
\end{align*}
%%%%%%%%%%%%%%%%%%%%%%%%%%%%%%%%%%%%%%%%%%%%%%%%%%%%%%
\noindent\textbf{$\bullet$ Estimate of $\Sigma_{in}^{'}$.}
In this case, the key point is to gain a geometric decay in $k$ by using the goodness of the cubes. First, we need the following lemma.
\begin{lemma}\label{alpha-k/2}
Let $0 < \alpha \leq m(\lambda - 2)/2$. Given a cube $R \in \mathcal{D}_{good}$ and $k \geq r+1$, it holds that
\begin{equation*}
\bigg(\iint_{W_R} \int_{\Rn} |\widetilde{\theta}_t^\sigma(\mathbf{1}_{(R^{(k-1)})^c} b)(y)|^2 \Big(\frac{t}{t+|x-y|}\Big)^{m\lambda}
\frac{d\mu(y)}{t^m} d\sigma(x) \frac{dt}{t} \bigg)^{1/2}
\lesssim 2^{- \alpha k/2}\sigma(R)^{1/2}.
\end{equation*}
\end{lemma}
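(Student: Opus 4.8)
The decay $2^{-\alpha k/2}$ is produced entirely by the goodness of $R$. Since $k\ge r+1$, the ancestor $R^{(k-1)}$ has $\ell(R^{(k-1)})=2^{k-1}\ell(R)\ge 2^{r}\ell(R)$, so applying the definition of a $\D$-good cube with $J=R^{(k-1)}$ gives
\[
\dist\big(R,\partial R^{(k-1)}\big)>\ell(R)^{\gamma}\ell(R^{(k-1)})^{1-\gamma}=2^{(k-1)(1-\gamma)}\ell(R)=:\rho_k\simeq 2^{k(1-\gamma)}\ell(R),
\]
hence $\dist\big(R,(R^{(k-1)})^{c}\big)\gtrsim\rho_k$. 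Now fix $(x,t)\in W_R$, so that $x\in R$ and $t\simeq\ell(R)\ll\rho_k$, and split the inner integral in $y$ into the \emph{local} region $\{|y-x|\le\rho_k/2\}$ and the \emph{tail} region $\{|y-x|>\rho_k/2\}$. It suffices to bound the contribution of each region to the square of the left-hand side by $C_b^{2}\,2^{-k\alpha}\sigma(R)$.

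\emph{Local region.} For such $y$ and any $z\notin R^{(k-1)}$ one has $|y-z|\ge|x-z|-|x-y|\gtrsim\rho_k$. Decomposing $(R^{(k-1)})^{c}$ into the dyadic annuli $\{2^{j}\rho_k\le|y-z|<2^{j+1}\rho_k\}$, $j\ge 0$, and using the size condition together with $\|b\|_{L^{\infty}(\sigma)}\le C_b$ and the power bound for $\sigma$ (legitimate because each ball $B(y,2^{j+1}\rho_k)$ contains $R$, which is not contained in $H$ since $R\in\D_0^{tr}$), the $j$-th annulus contributes at most $C_b\,t^{\alpha}(2^{j}\rho_k)^{-\alpha}$, whence $|\widetilde\theta_t^{\sigma}(\mathbf 1_{(R^{(k-1)})^{c}}b)(y)|\lesssim C_b\,t^{\alpha}\rho_k^{-\alpha}\simeq C_b\,2^{-k(1-\gamma)\alpha}$. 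Since $\int_{\Rn}\big(\tfrac{t}{t+|x-y|}\big)^{m\lambda}\tfrac{d\mu(y)}{t^{m}}\lesssim 1$ (power bound and $\lambda>1$) and $\iint_{W_R}d\sigma(x)\,\tfrac{dt}{t}\lesssim\sigma(R)$, the local region contributes $\lesssim C_b^{2}\,2^{-2k(1-\gamma)\alpha}\sigma(R)$.

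\emph{Tail region.} Here I use only the crude pointwise bound $|\widetilde\theta_t^{\sigma}(\mathbf 1_{(R^{(k-1)})^{c}}b)(y)|\le C_b\int_{\Rn}\tfrac{t^{\alpha}}{(t+|y-z|)^{m+\alpha}}\,d\sigma(z)\lesssim C_b$, which again follows from the size condition and the power bound for $\sigma$. On $\{|y-x|>\rho_k/2\}$ a dyadic annular splitting in $|x-y|$ gives $\int_{|y-x|>\rho_k/2}\big(\tfrac{t}{t+|x-y|}\big)^{m\lambda}\tfrac{d\mu(y)}{t^{m}}\lesssim (t/\rho_k)^{m(\lambda-1)}\simeq 2^{-k(1-\gamma)m(\lambda-1)}$ (this uses $\lambda>1$), so the tail region contributes $\lesssim C_b^{2}\,2^{-k(1-\gamma)m(\lambda-1)}\sigma(R)$. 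Combining the two regions and taking square roots yields a bound $\lesssim C_b\big(2^{-k(1-\gamma)\alpha}+2^{-k(1-\gamma)m(\lambda-1)/2}\big)\sigma(R)^{1/2}$; since $\gamma=\tfrac{\alpha}{2(m+\alpha)}<\tfrac12$ we have $(1-\gamma)\alpha>\alpha/2$, and since $0<\alpha\le m(\lambda-2)/2$ we also have $(1-\gamma)m(\lambda-1)/2>\tfrac12\cdot\tfrac{m(\lambda-1)}{2}>\tfrac{m(\lambda-2)}{4}\ge\alpha/2$, which gives the claimed estimate.

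\emph{Main obstacle.} The computations above are routine once the pointwise bounds on $\widetilde\theta_t^{\sigma}$ are in hand, and the only genuinely delicate point is justifying that the power bound for $\sigma$ may be invoked on every ball that arises: this is guaranteed only for balls meeting $Q\setminus H$ (property (b) of Theorem \ref{Theorem-big}), and it is precisely to make this unconditional — in particular to neutralise the outer $y$-integral over all of $\Rn$ and any $y$ that sits deep inside $H$ — that the cut-off built into $\widetilde s_t$ (restricting to $\Rn\setminus S_0$) and the restriction to transit cubes $P\not\subset H\cup T$ are in place. Note also that the admissible range $0<\alpha\le m(\lambda-2)/2$ forces $\lambda>2$, and that the estimate, just as in Lemma \ref{U(f)} and Lemma \ref{estimate-1}, would otherwise fail to close.
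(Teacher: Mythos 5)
Your reduction to a pointwise bound for fixed $(x,t)\in W_R$, and your treatment of the ``local'' region $|y-x|\le\rho_k/2$, are sound and essentially agree with the paper's proof: there every $\sigma$-ball to which the power bound is applied contains $R$, and $R\not\subset H$ lets you invoke hypothesis (b) of Theorem \ref{Theorem-big}. The gap is in the tail region $|y-x|>\rho_k/2$, where you assert the crude bound $|\widetilde\theta_t^\sigma(\mathbf{1}_{(R^{(k-1)})^c}b)(y)|\lesssim C_b$ ``from the size condition and the power bound for $\sigma$.'' No global power bound for $\sigma$ is available: $\sigma$ is an arbitrary finite Borel measure (it may even have atoms), and hypothesis (b) only gives $\sigma(B_\rho)\lesssim \rho^m$ when $B_\rho\not\subset H$. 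For a point $y$ deep inside $H$ (e.g.\ near an atom of $\sigma$, which necessarily lies in $H$ by (b)), the quantity $\int \frac{t^\alpha}{(t+|y-z|)^{m+\alpha}}\,d\sigma(z)\ge t^{-m}\sigma(B(y,t))$ is unbounded as $t=\ell(R)\to 0$, and such $y$ do occur in your tail region since $(R^{(k-1)})^c$ does not keep $z$ away from $y$. You flag exactly this as the delicate point, but the resolution you propose is not correct: the cut-off in $\widetilde s_t$ is the indicator $\mathbf{1}_{\Rn\setminus S_0}$ in the \emph{outer} variable $x$, and $S_0$ is a level set of $g^*_{\lambda,\sigma,Q}b$, not a set controlling where $\sigma$ violates the power bound; likewise the restriction to transit cubes constrains $P$ and $R$, not the location of $y$ or $z$. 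Neither device makes the power bound for $\sigma$ ``unconditional.''

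The paper avoids this by splitting in the $z$-variable rather than the $y$-variable: with $E_1=\{z:|z-x|\ge 2|x-y|\}$ one has $t+|y-z|\gtrsim|x-z|$ and the resulting $\sigma$-integral is over annuli centred at $x$ with radii $\ge d(R,\partial R^{(k-1)})$, whose balls contain $R\not\subset H$; for $E_2=\{z:|z-x|<2|x-y|\}$ one uses $\bigl(\tfrac{t}{t+|x-y|}\bigr)^{m\lambda/2}\lesssim\bigl(\tfrac{t}{t+|x-z|}\bigr)^{m+\alpha}$ (here $m\lambda/2\ge m+\alpha$) and Young's inequality $\|\xi*\eta\|_{L^2(\mu)}\le\|\xi\|_{L^2(\mu)}\|\eta\|_{L^1(\sigma)}$, which again converts the $\sigma$-mass into an integral of $\frac{t^\alpha}{(t+|x-z|)^{m+\alpha}}$ centred at $x$. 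Every application of the power bound for $\sigma$ is thereby anchored at $x\in R$. To repair your argument you would need to replace the crude $L^\infty$ bound in the tail region by this $E_1/E_2$ decomposition (or an equivalent duality/Young argument); as written, the tail estimate does not close.
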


\begin{proof}
We only need to show
\begin{align*}
\mathscr{K}:= \bigg(\int_{\Rn} |\widetilde{\theta}_t^\sigma(\mathbf{1}_{(R^{(k-1)})^c} b)(y)|^2 \Big(\frac{t}{t+|x-y|}\Big)^{m\lambda}
\frac{d\mu(y)}{t^m} \bigg)^{1/2} \lesssim 2^{-\alpha k/2}.
\end{align*}
We start with
\begin{align*}
\mathscr{K} & \lesssim \bigg[ \int_{\Rn} \bigg(\int_{R^{(k-1)^c}} \frac{t^\alpha}{(t + |y - z|)^{m + \alpha}} d\sigma(z) \bigg)^2 \Big(\frac{t}{t + |x-y|}\Big)^{m \lambda}
\frac{d\mu(y)}{t^m} \bigg]^{1/2} \\
&\leq \bigg[ \int_{\Rn} \bigg(\int_{E_1} \frac{t^\alpha}{(t + |y - z|)^{m + \alpha}} d\sigma(z) \bigg)^2 \Big(\frac{t}{t + |x-y|}\Big)^{m \lambda} \frac{d\mu(y)}{t^m} \bigg]^{1/2} \\
&\quad + \bigg[ \int_{\Rn} \bigg(\int_{E_2} \frac{t^\alpha}{(t + |y - z|)^{m + \alpha}} d\sigma(z) \bigg)^2 \Big(\frac{t}{t + |x-y|}\Big)^{m \lambda} \frac{d\mu(y)}{t^m} \bigg]^{1/2}:= \mathscr{K}_1 + \mathscr{K}_2,
\end{align*}
where $ E_1 = \big\{z \in (R^{(k-1)})^c; |z - x| \geq 2|x - y| \big\}$ and $E_2 = \big\{z \in (R^{(k-1)})^c; |z - x| < 2|x - y| \big\}.$
Since $k > r$, by the goodness of $R$, one obtains
$$ d(R,(R^{(k-1)})^c) > \ell(R)^{\gamma} \ell(R^{(k-1)})^{1-\gamma}= 2^{(k-1)(1-\gamma)} \ell(R) \gtrsim 2^{k/2} \ell(R).$$
It is should be noted that $R \subset B(x,d(R,\partial R^{(k-1)})/2) \not\subset H$ whenever $r$ is large enough.
Thus, by splitting the domain of integration into annuli and using the condition $(b)$ in Theorem $\ref{Theorem-big}$, it yields that
\begin{align*}
\int_{(R^{(k-1)})^c} \frac{\ell(R)^\alpha}{|z - x|^{m+\alpha}} d\sigma(z)
&\leq \int_{\Rn \setminus B(x,d(R,\partial R^{(k-1)}))} \frac{\ell(R)^\alpha}{|z - x|^{m+\alpha}} d\sigma(z) \\
&\lesssim \ell(R)^\alpha d(R, \partial R^{(k-1)})^{-\alpha} \lesssim 2^{-\alpha k/2}.
\end{align*}
For $\mathscr{K}_1$, if $z \in E_1$, then $ t + |y - z| > |x - z| - |x - y| \geq |x - z|/2$.
Thus, we obtain
$$ \mathscr{K}_1
\lesssim \int_{(R^{(k-1)})^c} \frac{\ell(R)^\alpha}{|z - x|^{m+\alpha}}d\sigma(z) \cdot
\bigg[ \int_{\Rn} \Big(\frac{t}{t + |x-y|}\Big)^{m \lambda} \frac{d\mu(y)}{t^m} \bigg]^{1/2}
\lesssim 2^{-\alpha k/2}.$$
As for $\mathscr{K}_2 $, by the Young inequality, we have the following estimate,
\begin{align*}
\mathscr{K}_2
&\lesssim \bigg[ \int_{\Rn} \bigg(\int_{E_2} \Big(\frac{t}{t+|y-z|}\Big)^{m+\alpha} \frac{t^{\frac{m \lambda}{2}-m}}{(t + |x - z|)^{\frac{m \alpha}{2}}} d\sigma(z) \bigg)^2
\frac{d\mu(y)}{t^m} \bigg]^{1/2} \\
&\lesssim \bigg[ \int_{\Rn} \bigg(\int_{E_2} \Big(\frac{t}{t+|y-z|}\Big)^{m+\alpha} \frac{t^\alpha}{(t + |x - z|)^{m+\alpha}} d\sigma(z) \bigg)^2 \frac{d\mu(y)}{t^m} \bigg]^{1/2} \\
&=\big\| \xi * \eta \big\|_{L^2(\mu)}
\leq \big\| \xi \big\|_{L^2(\mu)} \big\| \eta \big\|_{L^1(\sigma)} \\
&\lesssim \int_{(R^{(k-1)})^c} \frac{\ell(R)^\alpha}{|z - x|^{m+\alpha}}d\sigma(z)
\lesssim 2^{-\alpha k/2},
\end{align*}
where
$ \xi(z) = \Big(\frac{t}{t + |z|}\Big)^{m + \alpha}$ and $\eta(z) = \frac{t^\alpha}{(t + |z-x|)^{m + \alpha}}\mathbf{1}_{E_2}(z).$
\end{proof}

We are in the position of dominating $\Sigma_{in}^{'}$.
By $R^{(k-1)} \not\subset T$ and the accretivity condition for $b$, it follows that
\begin{align*}
|B_{R^{(k-1)}}|
&\lesssim \sigma(R^{(k-1)})^{-1} \bigg| \int_{R^{(k-1)}} B_{R^{(k-1)}} b(x) d\sigma(x) \bigg| \\
&\lesssim \sigma(R^{(k-1)})^{-1} \bigg| \int_{R^{(k-1)}} \Delta_{R^{(k)}} f(x) d\sigma(x) \bigg| \\
&\lesssim \sigma(R^{(k-1)})^{-1/2} \big\| \Delta_{R^{(k)}} f \big\|_{L^2(\sigma)}.
\end{align*}
Therefore, the Minkowski inequality, together with Lemma $\ref{alpha-k/2}$ yields that
\begin{align*}
\Sigma_{in}^{'}
&\lesssim \sum_{\substack{R \in \mathcal{D}_{good} \\ \ell(R) \leq 2^{s-r-1}}} \sigma(R) \bigg( \sum_{k=r+1}^{s-\log_2 \ell(R)}2^{-\alpha k/2} \sigma(R^{(k-1)})^{-1/2} \big\|
\Delta_{R^{(k)}} f \big\|_{L^2(\sigma)} \bigg)^2 \\
&\lesssim \sum_{\substack{R \in \mathcal{D}_{good} \\ \ell(R) \leq 2^{s-r-1}}} \sigma(R) \sum_{k=r+1}^{s-\log_2\ell(R)}
2^{-\alpha k/2} \sigma(R^{(k-1)})^{-1} \big\| \Delta_{R^{(k)}} f \big\|_{L^2(\sigma)}^2,
\end{align*}
where $s=\log_2 \ell(P_0)$. Reindexing the above sum gives that
\begin{align*}
\Sigma_{in}^{'}
&\lesssim \sum_{k=r+1}^\infty 2^{- \alpha k/2} \sum_{j=k-s}^\infty \sum_{S:\ell(S)=2^{k-j-1}} \big\| \Delta_{S^{(1)}} f \big\|_{L^2(\sigma)}^2 \sigma(S)^{-1}
\sum_{\substack{R:\ell(R)=2^{-j} \\ R \subset S}} \sigma(R) \\
&=\sum_{k=r+1}^\infty 2^{- \alpha k/2}\sum_{j=k-s}^\infty \sum_{S:\ell(S)=2^{k-j-1}}\big\| \Delta_{S^{(1)}}f \big\|_{L^2(\sigma)}^2 \\
&\lesssim \sum_{k=r+1}^\infty 2^{-\alpha k/2}\sum_{j=k-s}^\infty \sum_{S:\ell(S)=2^{k-j}}\big\| \Delta_{S} f \big\|_{L^2(\sigma)}^2 \\
&\lesssim \sum_{S:\ell(S) \leq 2^s} \big\| \Delta_{S} f \big\|_{L^2(\sigma)}^2
\lesssim \big\| f \big\|_{L^2(\sigma)}^2.
\end{align*}
%%%%%%%%%%%%%%%%%%%%%%%%%%%%%%%%%%%%%%%%%%%%%%%%%%%%%%%
\noindent\textbf{$\bullet$ Estimate of $\Sigma_{in}^{''}$.}
The following lemma will be needed in the estimate of $\Sigma_{in}^{''}$.
\begin{lemma}
Let $0 < \alpha \leq m(\lambda - 2)/2$. For each cube $S \in ch(R^{(k)})$ and $S \neq R^{(k-1)}$, it holds that
\begin{align*}
\bigg(\iint_{W_R} \int_{\Rn} |\widetilde{\theta}_t^{\sigma}(\mathbf{1}_{S} \Delta_{R^{(k)}} f)(y)|^2 &\Big(\frac{t}{t+|x-y|}\Big)^{m\lambda}
\frac{d\mu(y)}{t^m} d\sigma(x) \frac{dt}{t} \bigg)^{1/2} \\
&\lesssim  \ 2^{- \alpha k/2}\sigma(R)^{1/2} \sigma({R^{(k-1)}})^{-1/2} \big\| \Delta_{R^{(k)}} f \big\|_{L^2(\sigma)}.
\end{align*}
\end{lemma}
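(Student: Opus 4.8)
The plan is to follow the scheme of the proof of Lemma~\ref{alpha-k/2} closely; the one structural change is that $\mathbf{1}_S\Delta_{R^{(k)}}f$ admits no useful pointwise bound and so must be carried in $L^2(\sigma)$ throughout. I would begin with the geometry. Since $S\in ch(R^{(k)})$, $S\neq R^{(k-1)}$ and $R\subset R^{(k-1)}$, the cubes $R$ and $S$ sit in disjoint children of $R^{(k)}$, so $d(R,S)\ge d(R,\partial R^{(k-1)})=:d_k$. Because $k\ge r+1$ forces $\ell(R^{(k-1)})=2^{k-1}\ell(R)\ge 2^{r}\ell(R)$, the goodness of $R$ applied to its ancestor $R^{(k-1)}$ gives $d_k>\ell(R)^{\gamma}\ell(R^{(k-1)})^{1-\gamma}=2^{(k-1)(1-\gamma)}\ell(R)$; in particular $d_k\gg t$ for $(x,t)\in W_R$, and, exactly as in Lemma~\ref{alpha-k/2}, the ball $B(x,d_k/2)$ contains $R$ and hence is not contained in $H$ once $r$ is large.

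Next I would reduce to a single kernel integral. The size estimate for $\widetilde{s}_t$ gives $|\widetilde{\theta}_t^{\sigma}(\mathbf{1}_S\Delta_{R^{(k)}}f)(y)|\lesssim\int_S t^{\alpha}(t+|y-z|)^{-m-\alpha}|\Delta_{R^{(k)}}f(z)|\,d\sigma(z)$. Splitting the $z$-integral into $E_1=\{z\in S:|z-x|\ge 2|x-y|\}$ and $E_2=\{z\in S:|z-x|<2|x-y|\}$ and handling them exactly as the pieces $\mathscr{K}_1,\mathscr{K}_2$ in the proof of Lemma~\ref{alpha-k/2} --- on $E_1$ one uses $t+|y-z|\gtrsim t+|x-z|$; on $E_2$ one invokes Young's inequality together with the condition $0<\alpha\le m(\lambda-2)/2$ to absorb the bump $(t/(t+|x-y|))^{m\lambda}$, plus the elementary bound $\int_{\Rn}(t/(t+|x-y|))^{m\lambda}\,d\mu(y)/t^{m}\lesssim 1$ --- one arrives, for $(x,t)\in W_R$, at $\big(\int_{\Rn}|\widetilde{\theta}_t^{\sigma}(\mathbf{1}_S\Delta_{R^{(k)}}f)(y)|^2(t/(t+|x-y|))^{m\lambda}\,d\mu(y)/t^{m}\big)^{1/2}\lesssim\mathscr{I}(x,t)$, where $\mathscr{I}(x,t):=\int_S t^{\alpha}(t+|x-z|)^{-m-\alpha}|\Delta_{R^{(k)}}f(z)|\,d\sigma(z)$.

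The quantity $\mathscr{I}(x,t)$ I would estimate by Cauchy--Schwarz in $z$: since $|x-z|\ge d_k$ on $S$, decomposing $\{|x-z|\ge d_k\}$ into dyadic annuli and noting that each ball $B(x,2^{j+1}d_k)\supset B(x,d_k/2)$ is not contained in $H$, property~(b) of Theorem~\ref{Theorem-big} yields $\sigma(B(x,2^{j+1}d_k))\lesssim(2^{j}d_k)^{m}$, so $\int_S t^{2\alpha}(t+|x-z|)^{-2m-2\alpha}\,d\sigma(z)\lesssim t^{2\alpha}d_k^{-m-2\alpha}$ and hence $\mathscr{I}(x,t)\lesssim t^{\alpha}d_k^{-m/2-\alpha}\|\mathbf{1}_S\Delta_{R^{(k)}}f\|_{L^2(\sigma)}\le t^{\alpha}d_k^{-m/2-\alpha}\|\Delta_{R^{(k)}}f\|_{L^2(\sigma)}$. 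Feeding in $t\simeq\ell(R)$, $d_k>2^{(k-1)(1-\gamma)}\ell(R)$ and integrating over $W_R$ (using $\iint_{W_R}d\sigma(x)\,dt/t\lesssim\sigma(R)$) then yields a bound of the form $2^{-(k-1)(1-\gamma)(m/2+\alpha)}\ell(R)^{-m/2}\sigma(R)^{1/2}\|\Delta_{R^{(k)}}f\|_{L^2(\sigma)}$.

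It remains to match this with the asserted right-hand side, and this is where I expect the only genuine subtlety. Property~(b) is used a second time: since $R\subset R^{(k-1)}$ is a transit cube it is not contained in $H$, so neither is $R^{(k-1)}$, and property~(b) forces $\sigma(R^{(k-1)})\lesssim\ell(R^{(k-1)})^{m}=2^{(k-1)m}\ell(R)^{m}$, i.e. $\ell(R)^{-m/2}\lesssim 2^{(k-1)m/2}\sigma(R^{(k-1)})^{-1/2}$. After this substitution the claim reduces to the numerical inequality $(k-1)\big[(1-\gamma)(m+2\alpha)-m\big]\ge\alpha k$; with $\gamma=\frac{\alpha}{2(m+\alpha)}$ one computes $(1-\gamma)(m+2\alpha)-m=\alpha\cdot\frac{3m+2\alpha}{2(m+\alpha)}$, which is strictly larger than $\alpha$, so the inequality holds for every $k\ge r+1$ as soon as $r$ is fixed large enough in terms of $m$ and $\alpha$. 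The delicate point is exactly this balancing of exponents: the kernel analysis produces a factor $\ell(R)^{-m/2}$ that can be traded for the required $\sigma(R^{(k-1)})^{-1/2}$ only at the cost of $2^{(k-1)m/2}$, and the argument closes only because the goodness exponent $\gamma$ is small enough that the geometric gain $2^{-(k-1)(1-\gamma)(m/2+\alpha)}$ dominates $2^{(k-1)m/2}$ with enough room left over for the $2^{-\alpha k/2}$ decay.
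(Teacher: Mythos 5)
Your argument is correct, and it closes the estimate by a genuinely different route than the one the paper gestures at. The paper's proof is a one-line pointer: since $S$ and $R^{(k-1)}$ are disjoint children of $R^{(k)}$, goodness gives $d(S,R)\ge d(R,\partial R^{(k-1)})>\ell(R)^{\gamma}\ell(S)^{1-\gamma}$ with $\ell(S)=2^{k-1}\ell(R)$, so $S$ and $R$ are \emph{separated} in the technical sense and the machinery of Lemma \ref{estimate-1} together with \eqref{R-R-P-R} yields the pointwise bound $\frac{\ell(S)^{\alpha/2}\ell(R)^{\alpha/2}}{D(S,R)^{m+\alpha}}\,\sigma(S)^{1/2}\,\|\Delta_{R^{(k)}}f\|_{L^2(\sigma)}$; since $D(S,R)\simeq\ell(S)$ this equals $2^{-(k-1)\alpha/2}\ell(S)^{-m}\sigma(S)^{1/2}\|\Delta_{R^{(k)}}f\|_{L^2(\sigma)}$, and property (b) (giving $\sigma(S)\lesssim\ell(S)^m$ and $\ell(S)^{-m/2}=\ell(R^{(k-1)})^{-m/2}\lesssim\sigma(R^{(k-1)})^{-1/2}$, both legitimate because these cubes contain $R\not\subset H$) delivers the decay $2^{-(k-1)\alpha/2}$ with no exponent competition. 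You instead follow the template of Lemma \ref{alpha-k/2}: the $E_1/E_2$ splitting and Young's inequality reduce matters to $\mathscr{I}(x,t)$, then Cauchy--Schwarz plus dyadic annuli plus property (b) give the stronger geometric factor $t^{\alpha}d_k^{-m/2-\alpha}$ but leave you holding $\ell(R)^{-m/2}$ rather than $\sigma(R^{(k-1)})^{-1/2}$; converting one into the other costs $2^{(k-1)m/2}$, and your computation $(1-\gamma)(m+2\alpha)-m=\alpha\cdot\frac{3m+2\alpha}{2(m+\alpha)}>\alpha$ is exactly the verification that the goodness gain absorbs this loss. All the individual steps check out (the geometry $d(R,S)\ge d(R,\partial R^{(k-1)})$, the use of $R\not\subset H$ to invoke property (b) on the annuli, the Carleson-type integration over $W_R$). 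One small remark: since the coefficient $\frac{3m+2\alpha}{2(m+\alpha)}$ exceeds $1$ by the fixed amount $\frac{m}{2(m+\alpha)}$, one has $2^{-(k-1)(1-\gamma)(m/2+\alpha)+(k-1)m/2}\le 2^{\alpha/2}\,2^{-\alpha k/2}$ for \emph{every} $k\ge1$, so no extra largeness of $r$ is actually needed for this final numerical step — only for the geometric facts $d_k\gtrsim\ell(R)$ and $R\subset B(x,d_k/2)$ that both proofs already require. The trade-off between the two routes is that the paper's keeps $\sigma(S)^{1/2}$ and $\ell(S)^{-m}$ paired from the outset and never sees competing exponents, whereas yours isolates where the smallness of $\gamma=\frac{\alpha}{2(m+\alpha)}$ is genuinely used.
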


\begin{proof}
It suffices to prove that
\begin{align*}
\bigg( \int_{\Rn} |\widetilde{\theta}_t^{\sigma}(\mathbf{1}_{S} \Delta_{R^{(k)}} f)(y)|^2 \Big(\frac{t}{t+|x-y|}\Big)^{m\lambda}
\frac{d\mu(y)}{t^m} \bigg)^{1/2}
\lesssim 2^{- \alpha k/2} \sigma({R^{(k-1)}})^{-1/2} \big\| \Delta_{R^{(k)}} f \big\|_{L^2(\sigma)}.
\end{align*}
Indeed, combining the techniques in Lemma $\ref{estimate-1}$ with the arguments in $(\ref{R-R-P-R})$, we get the desired estimate immediately.

\end{proof}

Proceeding as what we have done in the estimate of $\Sigma_{in}^{'}$, we will deduce that
$
\Sigma_{in}^{''} \lesssim \big\| f \big\|_{L^2(\sigma)}^2.
$

%%%%%%%%%%%%%%%%%%%%%%%%%%%%%%%%%%%%%%%%%%%%%%%%%%%%%%%
\vspace{0.3cm}
\noindent\textbf{$\bullet$ Paraproduct estimate.}
Our aim here is to bounded the last term $\Sigma_{in}^{'''}$. We begin by showing that $\{ a_P \}_{P \in \mathcal{D}}$ is a Carleson sequence, where
$$
a_P = \sum_{\substack{R \in \mathcal{D}_0^{tr}:R \subset P_0 \\ R:\mathcal{D}-{good} \\ \ell(R)<2^{-r} \ell(P_0) \\ P=R^{(r)}}} \int_{R}
\int_{\ell(R)/2}^{\min\{\ell(R),\ell(Q)\}}\int_{\Rn}|\widetilde{\theta}_t^\sigma b(y)|^2 \Big(\frac{t}{t+|x-y|}\Big)^{m\lambda}  \frac{d\mu(y)}{t^m}\frac{dt}{t} d\sigma(x).
$$
Actually, it holds that
\begin{align*}
\sum_{\substack{P \in \mathcal{D}\\ P \subset S}} a_P
&\leq \sum_{\substack{R \in \mathcal{D}_0^{tr} \\ R \subset S}}
\iint_{[S \times (0,\ell(Q))] \cap W_R} \int_{\Rn}|\widetilde{\theta}_t^{\sigma} b(y)|^2 \Big(\frac{t}{t+|x-y|}\Big)^{m\lambda}  \frac{d\mu(y)}{t^m} d\sigma(x) \frac{dt}{t} \\
&\leq \iint_{S \times (0,\ell(Q))} \int_{\Rn}|\widetilde{\theta}_t^{\sigma} b(y)|^2 \Big(\frac{t}{t+|x-y|}\Big)^{m\lambda}  \frac{d\mu(y)}{t^m} d\sigma(x) \frac{dt}{t} \\
&:=\int_{S} \widetilde{g}_{\lambda,\sigma,Q}^{*}(b)(x) d\sigma(x)
\lesssim \sigma(S).
\end{align*}
It is necessary and worth pointing out that
$\widetilde{g}_{\lambda,\sigma,Q}^{*}(b)(x)=g_{\lambda,\sigma,Q}^{*}(b)(x)\mathbf{1}_{\Rn \setminus S_0}(x) \leq \xi_0$ and
$S_0$ is defined at the beginning of subsection $\ref{subsec-first}$.

It remains only to consider the bound of $\Sigma_{in}^{'''}$. The condition $R^{(r)} \not\subset H \cup T$ implies that
$|\langle b \rangle_{R^{(r)}}| \gtrsim 1$. Consequently, by the Carleson embedding theorem, it yields that
\begin{align*}
\mathcal{J}_3
&\lesssim \sum_{\substack{R \in \mathcal{D}_{good} \\ \ell(R) \leq 2^{s-r-1}}} |\langle f \rangle_{R^{(r)}}|^2 \iint_{W_R} \int_{\Rn} |\widetilde{\theta}_t^\sigma b(y)|^2
\Big(\frac{t}{t+|x-y|}\Big)^{m \lambda}\frac{d\mu(y)}{t^m} d\sigma(x) \frac{dt}{t}\\
&\leq \sum_{S \in \mathcal{D}} |\langle f \rangle_{S}|^2 \sum_{\substack{R \in \mathcal{D}_{good}\\ S=R^{(r)}}}
\iint_{W_R} \int_{\Rn} |\widetilde{\theta}_t^\sigma b(y)|^2 \Big(\frac{t}{t+|x-y|}\Big)^{m \lambda}\frac{d\mu(y)}{t^m} d\sigma(x) \frac{dt}{t}\\
&\lesssim  \sum_{S \in \mathcal{D}} |\langle f \rangle_{S}|^2 a_S
\lesssim \big\| f \big\|_{L^2(\sigma)}^2.
\end{align*}
Therefore, in all, we have completed the proof of Theorem $\ref{Theorem-big}$.

\qed
%%%%%%%%%%%%%%%%%%%%%%%%%%%%%%%%%%%%%%%%%%%%%%%%%%%%%%%%%%%%%%%%%%%%%%%%%%%%%%%%%
%%%%%%%%%%%%%%%%%%%%%%%%%%%%%%%%%%%%%%%%%%%%%%%%%%%%%%%%%%%%%%%%%%%%%%%%%%%%%%%%%
\section{Non-homogeneous local $Tb$ theorem}\label{Sec-local}
This section will be devoted to demonstrate the local $Tb$ Theorem $\ref{Theorem-Local}$. The Proposition below will be used in the proof.
\begin{proposition}\label{Pro-B1-B2}
Let $\mu$ be an upper power bound measure and $B_1,B_2 < \infty$, $\epsilon_0 \in (0,1)$ be given constants. Let $Q \subset \Rn$ be a fixed cube. Assume that there exists a complex
measure $\nu=\nu_Q$ such that
\begin{enumerate}
\item [(1)] $\supp \nu \subset Q$;
\item [(2)] $\mu(Q)=\nu(Q)$;
\item [(3)] $||\nu|| \leq B_1 \mu(Q)$;
\item [(4)] For all Borel sets $A \subset Q$ satisfying $\mu(A) \leq \epsilon_0 \mu(Q)$, we have
$|\nu|(A) \leq \frac{||\nu||}{32B_1}$.
\end{enumerate}
Suppose that there exists $s>0$ and a Borel set $U_Q \subset \Rn$ for which $|\nu|(U_Q) \leq \frac{||\nu||}{16B_1}$ so that
$$
\sup_{\zeta >0} \zeta^s \mu\big(\{x \in Q \setminus U_Q; g_{\lambda,Q}^*(\nu)(x) > \zeta\}\big) \leq B_2 ||\nu||.
$$
Then there is some subset $G_Q \subset Q \setminus U_Q$, such that, for any $f \in L^2(\mu)$ with $\supp f \subset G_Q$, it holds that
\begin{enumerate}
\item [(i)] $\mu(G_Q) \simeq \mu(Q)$;
\item [(ii)] $\big\| \mathbf{1}_{G_Q} g_{\lambda,\mu}^*(f) \big\|_{L^2(\mu)} \lesssim \big\| f \big\|_{L^2(\mu)}$.
\end{enumerate}
\end{proposition}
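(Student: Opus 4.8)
The plan is to deduce Proposition~\ref{Pro-B1-B2} from the big piece global $Tb$ theorem (Theorem~\ref{Theorem-big}) by manufacturing an admissible triple $(\sigma,b,H)$ out of $\nu=\nu_Q$. We may assume $\mu(Q)>0$, since otherwise hypothesis $(3)$ forces $\nu=0$ and nothing is to be proved. I would take $\sigma:=\mu\lfloor Q$ (so $\supp\sigma\subset Q$, and, since $\mu(B_r)\lesssim r^m$, no ball is $\sigma$-heavy once $C_0$ exceeds the implicit constant, which makes hypothesis $(b)$ of Theorem~\ref{Theorem-big} harmless). Write the Lebesgue decomposition $\nu=\nu_{ac}+\nu_s$ of $\nu$ with respect to $\mu$ on $Q$, set $b_0:=d\nu_{ac}/d\mu$ and $b:=b_0\mathbf 1_{\{|b_0|\le M\}}$ with $M=M(B_1,\epsilon)$ chosen large, so that $\|b\|_{L^\infty(\sigma)}\le M=:C_b$. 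Applying hypothesis $(4)$ to (an exhaustion of) the $\mu$-null set carrying $\nu_s$ gives $\|\nu_s\|\le\|\nu\|/(32B_1)$, and applying it to the level sets $\{|b_0|>M\}$ together with $b_0\in L^1(\mu\lfloor Q)$ gives $\mu(\{|b_0|>M\})\to0$ and $\int_{\{|b_0|>M\}}|b_0|\,d\mu\to0$ as $M\to\infty$. Consequently the error measure $\eta:=\nu-b\sigma=\nu_s+b_0\mathbf 1_{\{|b_0|>M\}}\mu$ has $\|\eta\|\le\|\nu\|/(16B_1)$ (with the density part as small as we wish) and is carried by a set $E$ with $\mu(E)$ small, and $b\equiv0$ on $\{|b_0|>M\}$.

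Next I would set $H:=U_Q\cup H_E$, where $H_E$ is a heavy-ball (Besicovitch) hull of $E$ built from $\eta$ — namely the union of the balls $B_r$ with $|\eta|(B_r)>C_0'r^m$ — chosen so that $\mu(H_E\cap Q)$ stays small; clearly $G_Q\subset Q\setminus H\subset Q\setminus U_Q$. Hypothesis $(b)$ is vacuous as noted. For $(a)$ one must show $\sigma(H\cup T_w)\le\delta_0\sigma(Q)$ uniformly in $w$ for some $\delta_0<1$: since $b=0$ on $\{|b_0|>M\}$, $|\int_{H\cap Q}b\,d\sigma|$ is controlled by $|\nu|(U_Q)\le\|\nu\|/(16B_1)$ plus a $(4)$-estimate on $H_E\cap Q$, while $|\langle b\rangle^\sigma_R|\le c_1$ on each maximal stopping cube $R$ defining $T_w$ gives $|\int_{T_w}b\,d\sigma|\le c_1\sigma(Q)$; on the other hand
$$
\Re\int_Q b\,d\sigma=\Re\,\nu(Q)-\Re\int_{\{|b_0|>M\}}b_0\,d\mu\ \ge\ \tfrac{15}{16}\,\mu(Q),
$$
so splitting $\int_Q b\,d\sigma$ over $H\cup T_w$ and its complement and using $|\int_{Q\setminus(H\cup T_w)}b\,d\sigma|\le C_b\,\sigma(Q\setminus(H\cup T_w))$ with $c_1$ small yields $\sigma(Q\setminus(H\cup T_w))\gtrsim_{B_1,\epsilon}\sigma(Q)$. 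The same computation with $U_Q$ alone already gives $\mu(Q\setminus U_Q)\gtrsim\mu(Q)$, the quantitative seed of conclusion $(i)$. Propagating the non-degeneracy $(4)$ through the various bad sets in this way is the first delicate point.

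For $(c)$, pointwise $g^*_{\lambda,\sigma,Q}(b)\le g^*_{\lambda,Q}(\nu)+g^*_{\lambda,Q}(\eta)$. The first term is immediate: since $Q\setminus H\subset Q\setminus U_Q$, the weak testing hypothesis of Proposition~\ref{Pro-B1-B2} gives $\sup_{\zeta>0}\zeta^s\,\sigma(\{x\in Q\setminus H:g^*_{\lambda,Q}(\nu)(x)>\zeta\})\le B_2\|\nu\|\le B_1B_2\,\sigma(Q)$. The error term $g^*_{\lambda,Q}(\eta)$ is dominated, via the reductions in Lemmas~\ref{U(f)}--\ref{T(f)}, by a $\mathcal V_t$-type expression in $|\eta|$; for $x\in Q\setminus H$ the heavy-ball bound $|\eta|(B_r)\le C_0'r^m$ built into $H_E$, split over dyadic annuli around $x$, should yield that $g^*_{\lambda,Q}(\eta)$ lies in $L^{s,\infty}$ of $\mu\lfloor(Q\setminus H)$ with norm a small multiple of $\mu(Q)^{1/s}$. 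Making this error estimate clean — in particular choosing $M$ and the hull $H_E$ so that $\|\eta\|$ and $\mu(H_E)$ are small enough to fit inside the slack provided by step $(a)$, and extracting the decay in $t$ needed to make the $t$-integral converge — is the main obstacle of the proof. Granting $(a)$–$(c)$, Theorem~\ref{Theorem-big} produces $G_Q\subset Q\setminus H\subset Q\setminus U_Q$ with $\sigma(G_Q)\simeq\sigma(Q)$, hence $\mu(G_Q)\simeq\mu(Q)$, which is $(i)$, and $\|\mathbf 1_{G_Q}g^*_{\lambda,\sigma,Q}(f)\|_{L^2(\sigma)}\lesssim\|f\|_{L^2(\sigma)}$. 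For $f\in L^2(\mu)$ with $\supp f\subset G_Q$ one has $f\in L^2(\sigma)$ with the same norm and $g^*_{\lambda,\sigma,Q}(f)=g^*_{\lambda,\mu,Q}(f)$ there; it then remains to pass from the localized $g^*_{\lambda,\mu,Q}$ to the full $g^*_{\lambda,\mu}$, i.e.\ to bound the contributions of $t>\ell(Q)$ and of $y$ far from $Q$, which is a routine tail estimate using only the size condition and $\mu(B(x,r))\lesssim r^m$ (of the same flavour as the estimates of $\mathfrak N_1,\mathfrak N_3$ in the proof of Lemma~\ref{good-lambda}). This gives $(ii)$ and completes the proof.
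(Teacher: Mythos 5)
Your reduction to Theorem \ref{Theorem-big} is the right strategy, but your choice of the pair $(\sigma,b)$ creates a gap that you yourself flag as ``the main obstacle'' and that I do not believe can be closed. You take $\sigma=\mu\lfloor Q$ and $b$ a truncated Radon--Nikodym density, so that $\nu=b\sigma+\eta$ with an error measure $\eta$ (the $\mu$-singular part of $\nu$ plus the truncated tail). Verifying hypothesis (c) of Theorem \ref{Theorem-big} then forces you to produce a weak $L^s(\mu\lfloor(Q\setminus H))$ bound for $g^*_{\lambda,Q}(\eta)$, and the only information you have about $\eta$ is that $\|\eta\|$ is small, that it is carried by a $\mu$-small set, and that after removing the heavy-ball hull $H_E$ one has $|\eta|(B(x,r))\le C_0'r^m$ for all $x\notin H_E$ and all $r$. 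That growth bound gives $\mathcal V_t(|\eta|)(x)\lesssim C_0'$ \emph{uniformly in $t$}, but no decay as $t\to0$, and then $\int_0^{\ell(Q)}\mathcal V_t(|\eta|)(x)^2\,\frac{dt}{t}$ diverges. Decay in $t$ would require either cancellation/smoothness of $\eta$ (there is none: $\eta$ is essentially an arbitrary singular measure) or a positive distance from $x$ to the carrier of $\eta$ (there is none: a $\mu$-null carrier can be dense in $Q$). Concretely, $\eta$ could contain a Cantor-type measure of exactly critical growth $\simeq r^m$ and arbitrarily small mass, for which no ball is heavy, $H_E=\emptyset$, and $g^*_{\lambda,Q}(\eta)\equiv+\infty$ on its support; the hypotheses of the proposition do not exclude $\mu$ charging points arbitrarily close to (or on) such a set. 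Trying to recover $g^*_{\lambda,Q}(\eta)\le g^*_{\lambda,Q}(\nu)+g^*_{\lambda,\sigma,Q}(b)$ is circular, since the second term is exactly what condition (c) is supposed to control.

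The paper's proof removes this obstacle at the source: it takes $\sigma=|\nu|$ and $b=d\nu/d|\nu|$ the unimodular polar density, so that $b\,d\sigma=d\nu$ \emph{exactly}, $\|b\|_{L^\infty(\sigma)}=1$, and hypothesis (c) of Theorem \ref{Theorem-big} is literally the assumed weak testing condition for $g^*_{\lambda,Q}(\nu)$ --- no error term ever appears. The price is that $\sigma=|\nu|$ must be related back to $\mu$: this is done by adding to $H$ (besides $U_Q$ and the $\sigma$-heavy-ball set $H_1$, which is now nontrivial and handled via the maximal function $p(x)=\sup_r r^{-m}\sigma(B(x,r))$ together with hypothesis (4)) a set $H_2$ built from two stopping families $\mathcal F_1=\{\text{maximal }R:\sigma(R)>\epsilon_0^{-1}B_1\mu(R)\}$ and $\mathcal F_2=\{\text{maximal }R:\sigma(R)<\delta\mu(R)\}$, outside of which $d\sigma=\varphi\,d\mu$ with $\varphi\simeq1$. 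That density comparison is what converts $\sigma(G_Q)\simeq\sigma(Q)$ and the $L^2(\sigma)$ bound into conclusions (i) and (ii) for $\mu$. Your final step (passing from $g^*_{\lambda,\mu,Q}$ to $g^*_{\lambda,\mu}$ by a tail estimate for $t>\ell(Q)$) is correct and matches the paper, and your verifications of (a) and (b) are plausible in outline, but without a genuinely new idea for controlling $g^*_{\lambda,Q}(\eta)$ the proposal does not constitute a proof; I would recommend switching to the polar decomposition.
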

%%%%%%%%%%%%%%%%%%%%%%%%%%%%%%%%%%%%%%%%%%%%%%%%%%%%%%%%%%%%%%%%
\vspace{0.3cm}
\noindent\textbf{Proof of Theorem $\ref{Theorem-Local}$.}
By Proposition $\ref{Pro-B1-B2}$, for every $(2,\beta)$-doubling cube $Q \subset \Rn$ with $c_1$-small boundary satisfying the assumptions in Theorem
$\ref{Theorem-Local}$, there exists a subset $G_Q \subset Q$ such that
$$
\mu(G_Q) \simeq \mu(Q) \ \ \text{and} \ \
\big\| \mathbf{1}_{G_Q} g_{\lambda,\mu}^*(f) \big\|_{L^2(\mu)} \lesssim \big\| f \big\|_{L^2(\mu)},
$$
for any $f \in L^2(\mu)$ with $\supp f \subset G_Q$. Therefore, by Proposition $\ref{Pro-2-M}$ in next section, it follows that
\begin{equation}\label{4.1}
g_{\lambda}^* : \mathfrak{M}(\Rn) \rightarrow L^{1,\infty}(\mu \lfloor G_Q).
\end{equation}
Thus, the $L^p(\mu)$ boundedness of $g_{\lambda,\mu}^*$ follows from \ref{4.1} and Theorem $\ref{Theorem-L^p}$.

\qed

The remainder of this section is devoted to give the proof of Proposition $\ref{Pro-B1-B2}$. In this step, the big piece $Tb$ Theorem $\ref{Theorem-big}$ will be used.
%%%%%%%%%%%%%%%%%%%%%%%%%%%%%%%%%%%%%%%%%%%%%%%%%%%%%%%%%%%%%%%

\vspace{0.3cm}
\noindent\textbf{Proof of Proposition $\ref{Pro-B1-B2}$.}
To apply Theorem $\ref{Theorem-big}$, it is essential to construct the exceptional set $H$. Some ideas will be taken from \cite{MMV}, which essentially goes back to
\cite[~p.138]{Tolsa}. For the sake of descriptive integrality, we give the details of the construction.

We may assume that $\supp \mu \subset Q$. Denote $\sigma=|\nu|$. Using the decomposition theorem of complex measure, we find a Borel measurable function $b$ such that $d\nu=b d\sigma$
and $|b(x)| \equiv 1$.
Set
$$
T_w = \bigcup_{R \in \mathcal{A}_w}R,\ \ \text{and} \
\mathcal{A}_w = \big\{\text{maximal} \ R \in \mathcal{D}(w);|\langle b \rangle_R^{\sigma}| < \eta) \big\},
$$
where $\eta=1/(2B_1)$. The assumptions give that
\begin{align*}
B_1^{-1} \sigma(Q) = B_1^{-1} ||\nu|| \leq  \mu(Q)
&= \nu(Q) = \bigg| \int_Q b \ d\sigma \bigg|
=\bigg| \int_{Q \setminus T_w} b \ d\sigma + \sum_{R \in \mathcal{A}_w} \int_R b \ d\sigma \bigg| \\
&\leq \sigma(Q \setminus T_w) + \eta \sigma(Q)
=(1+\eta)\sigma(Q) - \sigma(T_w).
\end{align*}
Then, one obtains that $\sigma(T_w) \leq (1-\eta)\sigma(Q)$.

Let $p(x)=\sup_{r>0} r^{-m} \sigma(B(x,r))$ {and} $ E_{p_0}=\{x \in \Rn; p(x) \geq p_0\} $ {for} $\ p_0 > 0.
$ By the fact $p: \mathfrak{M}(\Rn) \rightarrow L^{1,\infty}(\mu)$, it yields that
$$
\mu(E_{p_0})
\leq C p_0^{-1} ||\nu|| \leq C B_1 p_0^{-1} \mu(Q) \leq \epsilon_0 \mu(Q),
$$
whenever $p_0$ is large enough. Then the condition $(4)$ implies that $\sigma(E_{p_0/2^m}) \leq \eta/8 \sigma(Q)$.
For any $x$ with $p(x)>p_0$, set
$$
r(x)=\sup \{r>0;\sigma(B(x,r)) > p_0 r^m \},\ \ H_1 := \bigcup_{x:p(x)>p_0}B(x,r(x)).
$$
It is easy to see that the ball $B_r$ with $\sigma(B_r) > p_0 r^m$ satisfies $B_r \subset H_1$, and $H_1 \subset E_{p_0/2^m}$.
Hence, $\sigma(H_1) \leq \eta/8 \sigma(Q)$.

Now, we introduce the notions $\mathcal{F}_1$ and $\mathcal{F}_2$ as follows:
\begin{eqnarray*}
\mathcal{F}_1&=\big\{\text{maximal}\ R \in \D_0;\ \sigma(R) > \epsilon_0^{-1} B_1 \mu(R) \big\},
\mathcal{F}_2&=\big\{\text{maximal}\ R \in \D_0;\ \sigma(R) < \delta\mu(R)\big\},
\end{eqnarray*}
where $\delta=1/(32 B_1)$. Then we obtain that
$$
\mu \Big( \bigcup_{R \in \mathcal{F}_1} R \Big) \leq \epsilon_0 \mu(Q),\
\sigma \Big( \bigcup_{R \in \mathcal{F}_1} R \Big) \leq \delta \sigma(Q).
$$
In addition, we can further deduce that
$$
\sigma \Big( \bigcup_{R \in \mathcal{F}_2} R \Big)
\leq \delta \sum_{R \in \mathcal{F}_2} \mu(R)
\leq \mu(Q) \leq \delta \sigma(Q).
$$
By setting $H_2 = \bigcup_{R \in \mathcal{F}_1 \cup \mathcal{F}_2} R$, we have $\sigma(H_2) \leq 2 \delta \sigma(Q)$.
Below, we will discuss what kinds of properties does $H_2$ enjoys. If $x \in Q \setminus H_2$, then for each $R \in \mathcal{D}_0$ contains $x$, we have
$$
\delta \leq \frac{\sigma(R)}{\mu(R)} \leq \frac{B_1}{\epsilon_0}.
$$
This implies that $\sigma \lfloor (Q \setminus H_2) \ll  \mu \lfloor (Q \setminus H_2)$.
By Radon-Nikodym theorem, for all Borel sets $A \subset Q \setminus H_2$, one can find a function $\varphi \geq 0$ such that
$\sigma(A)=\int_A \varphi \ d\mu$. Moreover,
$\varphi \simeq 1$ for $\mu$-a.e. $x \in Q \setminus H_2$.

Let $H=H_1 \cup H_2 \cup U_Q$. Then $H$ enjoys the following properties :
\begin{enumerate}
\item [(1)] $\sigma(H \cup T_w) \leq (1-\eta/2) \sigma(Q)$;
\item [(2)] If $\sigma(B_r) > p_0 r^m$, then $B_r \subset H$;
\item [(3)] For each Borel set $A \subset Q \setminus H$, the function $\varphi$ satisfies
$$
\varphi(x) \simeq 1 \ \mu-a.e. \ x\in Q \setminus H \ \ \ \text{and} \  \
\sigma(A)=(\varphi d\mu)(A).
$$
\item [(4)] For any $\zeta > 0$, it holds that \begin{align*}
&\zeta^s \sigma \big(\{x \in Q \setminus H; g_{\lambda,\sigma,Q}^*(b)(x) > \zeta \}\big)
=\zeta^s \sigma \big(\{x \in Q \setminus H; g_{\lambda,Q}^*(\nu)(x) > \zeta \}\big) \\
&\lesssim \zeta^s \mu \big(\{x \in Q \setminus U_Q; g_{\lambda,\sigma,Q}^*(b)(x) > \zeta \}\big)
\leq B_2 ||\nu|| = B_2 \sigma(Q).
\end{align*}
\end{enumerate}
Using big piece $Tb$ Theorem $\ref{Theorem-big}$, for any $f \in L^2(\sigma)$, one can find $G_Q \subset Q \setminus H \subset Q \setminus U_Q$ such that
\begin{equation}\label{GQ-Q}
\sigma(G_Q) \simeq \sigma(Q) \ \ \text{and} \ \
|| \mathbf{1}_{G_Q} g_{\lambda,\sigma,Q}^*(f) ||_{L^2(\sigma)} \lesssim ||f||_{L^2(\sigma)}.
\end{equation}

Suppose that $h \in L^2(\mu)$ and $\supp h \subset G_Q$. The fact $G_Q \subset Q \setminus H$ implies that
$\varphi \simeq 1$ $\mu$-a.e. on the support of $h$. From $(\ref{GQ-Q})$ with respect to $f=h/\varphi$, it follows that
\begin{align*}
|| \mathbf{1}_{G_Q} g_{\lambda,\mu,Q}^*(h) ||_{L^2(\mu)}
\lesssim || \mathbf{1}_{G_Q} g_{\lambda,\mu,Q}^*(h) ||_{L^2(\sigma)}
&=|| \mathbf{1}_{G_Q} g_{\lambda,\sigma,Q}^*(h/\varphi) ||_{L^2(\sigma)} \lesssim ||h/\varphi||_{L^2(\sigma)}
\lesssim ||h||_{L^2(\mu)}.
\end{align*}
Applying the size condition and the H\"{o}lder inequality, we obtain
$$
|\theta_t^\mu h(y)| \lesssim t^{-m} \mu(G_Q)^{1/2} ||h||_{L^2(\mu)}.
$$
Consequently, it yields that
\begin{align*}
\mathfrak{G}&:=\bigg(\int_{G_Q}\int_{\ell(Q)}^{\infty}\int_{\Rn} \Big(\frac{t}{t+|x-y|}\Big)^{m\lambda} |\theta_t^\mu h(y)|^2 \frac{d\mu(y) dt}{t^{m+1}d\mu(x)}\bigg)^{1/2} \\
&\lesssim \mu(G_Q)^{1/2} ||h||_{L^2(\mu)}\bigg(\int_{G_Q}\int_{\ell(Q)}^{\infty}\frac{1}{t^{2m}}\frac{dt}{t} d\mu(x)\bigg)^{1/2}
\bigg(\int_{\Rn}\Big(\frac{t}{t+|x-y|}\Big)^{m\lambda} \frac{d\mu(y)}{t^m}\bigg)^{1/2} \\
&\lesssim \ell(Q)^{-m} \mu(G_Q) ||h||_{L^2(\mu)}
\lesssim ||h||_{L^2(\mu)}.
\end{align*}
Hence, for any $h \in L^2(\mu)$ with $\supp h \subset G_Q$, we have
$$
|| \mathbf{1}_{G_Q} g_{\lambda,\mu}^*(h) ||_{L^2(\mu)}
\leq || \mathbf{1}_{G_Q} g_{\lambda,\mu,Q}^*(h) ||_{L^2(\mu)} + \mathfrak{G}
\lesssim ||h||_{L^2(\mu)},
$$ Additionally, there holds that
$$
\mu(Q) \leq \sigma(Q) \simeq \sigma(G_Q)=\int_{G_Q} \varphi d\mu \lesssim \mu(G_Q).
$$
This finishes the proof of Proposition $\ref{Pro-B1-B2}$..

\qed
%%%%%%%%%%%%%%%%%%%%%%%%%%%%%%%%%%%%%%%%%%%%%%%%%%%%%%%%%%%%%%%%%%%%%%%%%%%%%
%%%%%%%%%%%%%%%%%%%%%%%%%%%%%%%%%%%%%%%%%%%%%%%%%%%%%%%%%%%%%%%%%%%%%%%%%%%%%
\section{The $\mathfrak{M}(\Rn) \rightarrow L^{1,\infty}(\mu)$ Bound}\label{Sec-M}
The following proposition has been used to demonstrate Theorem $\ref{Theorem-Local}$ in the above section. Here we present its proof.
\begin{proposition}\label{Pro-2-M}
Let $\lambda > 2$, $0 < \alpha \leq m(\lambda-2)/2$ and $\mu$ be a power bound measure. Suppose that $g_{\lambda,\mu}^*$ is bounded on $L^2(\mu)$, then
$g_{\lambda}^*$ is bounded from $\mathfrak{M}(\Rn)$ to $L^{1,\infty}(\mu)$.

In particular, the $L^2(\mu)$ boundedness of $g_{\lambda,\mu}^*$ implies its weak $(1,1)$ boundedness.
\end{proposition}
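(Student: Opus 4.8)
The plan is to prove the weak type bound $\mu(\{x\in\R^n : g_\lambda^*(\nu)(x) > \zeta\}) \lesssim \zeta^{-1}\|\nu\|$ for every $\nu \in \mathfrak{M}(\R^n)$ and every $\zeta > 0$; the asserted weak $(1,1)$ inequality is the special case $d\nu = f\,d\mu$. By a routine truncation and limiting argument we may assume $\nu$ has compact support, and when $\mu(\R^n)<\infty$ we may also assume $\zeta$ is large, the complementary range being trivial. The crude pointwise bound on $g_\lambda^*(\nu)$ obtained exactly as in the proof of Claim \ref{claim} guarantees the a priori finiteness needed below to legitimately apply Chebyshev's inequality.

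First I would apply the non-homogeneous Calder\'on--Zygmund decomposition of the measure $\nu$ at height $\zeta$, in the form due to Tolsa (and, earlier, Nazarov--Treil--Volberg). This produces a family of cubes $\{Q_i\}$ with bounded overlap such that $|\nu|(Q_i) \gtrsim \zeta\,\mu(2Q_i)$, together with a splitting $\nu = g\,d\mu + b$, $b = \sum_i b_i$, where $\|g\|_{L^\infty(\mu)} \lesssim \zeta$ and $\|g\|_{L^1(\mu)} \lesssim \|\nu\|$, each $b_i$ is a complex measure supported on a fixed dilate of $Q_i$ with $b_i(\R^n)=0$ and $\sum_i \|b_i\| \lesssim \|\nu\|$, and, as a consequence of the bounded overlap, the exceptional set $E := \bigcup_i 2Q_i$ satisfies $\mu(E) \lesssim \zeta^{-1}\|\nu\|$.

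For the good part, the two bounds on $g$ give $\|g\|_{L^2(\mu)}^2 \le \|g\|_{L^\infty(\mu)}\|g\|_{L^1(\mu)} \lesssim \zeta\|\nu\|$, so the hypothesised $L^2(\mu)$ boundedness of $g_{\lambda,\mu}^*$ together with Chebyshev's inequality yields $\mu(\{g_{\lambda,\mu}^*(g) > \zeta/2\}) \lesssim \zeta^{-2}\|g\|_{L^2(\mu)}^2 \lesssim \zeta^{-1}\|\nu\|$. For the bad part, after discarding $E$ it suffices, by Chebyshev, to prove the $L^1$ estimate $\int_{\R^n\setminus E} g_\lambda^*(b)\,d\mu \lesssim \|\nu\|$, and by Minkowski's inequality this reduces to the per-cube bound $\int_{\R^n\setminus 2Q_i} g_\lambda^*(b_i)(x)\,d\mu(x) \lesssim \|b_i\|$ with a uniform constant. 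To prove the latter, fix $i$, set $\ell = \ell(Q_i)$ with center $z_i$, and split the $(y,t)$-integral defining $g_\lambda^*(b_i)(x)$ according to whether $t > \ell$ or $t \le \ell$. In the regime $t > \ell$ one uses the vanishing $b_i(\R^n)=0$ and the H\"older condition on $s_t$, writing $\theta_t b_i(y) = \int[s_t(y,z)-s_t(y,z_i)]\,db_i(z)$; in the regime $t \le \ell$ one uses only the size condition together with the extra decay of the aperture factor $(t/(t+|x-y|))^{m\lambda}$ available since $\lambda > 2$. In either case, carrying out the $y$-integration with the help of the pointwise control of $\mathcal{U}_t$ by $\mathcal{V}_t$ from Lemma \ref{U(f)} and then integrating in $t$, one arrives at a bound of the form $\|b_i\|\,\ell^{\alpha}/(\ell+|x-z_i|)^{m+\alpha}$ for the contribution to $g_\lambda^*(b_i)(x)$, which integrates against $d\mu$ over $\R^n\setminus 2Q_i$ to at most $C\|b_i\|$ by the power bound on $\mu$ and summation over dyadic annuli --- a computation entirely parallel to the proof of Lemma \ref{T(f)}. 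Collecting the three contributions gives $\mu(\{g_\lambda^*(\nu) > \zeta\}) \le \mu(\{g_{\lambda,\mu}^*(g) > \zeta/2\}) + \mu(E) + \mu(\{x\notin E : g_\lambda^*(b)(x) > \zeta/2\}) \lesssim \zeta^{-1}\|\nu\|$.

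The main obstacle is this last per-cube estimate: the $g_\lambda^*$ aperture weight $(t/(t+|x-y|))^{m\lambda}$ prevents one from simply invoking a $g$-function argument, and one must carefully balance the H\"older/cancellation gain in the kernel against the decay of that weight, splitting the half-space into Whitney-type regions as in Lemmas \ref{U(f)}--\ref{T(f)}. The constraint $0 < \alpha \le m(\lambda-2)/2$ is exactly what makes this balancing work, and the bounded overlap of the Calder\'on--Zygmund cubes is what permits summing the per-cube bounds to recover $\|\nu\|$.
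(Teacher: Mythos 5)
Your overall architecture (Calder\'on--Zygmund decomposition at height $\zeta$, $L^2$ boundedness plus Chebyshev for the good part, an $L^1$ estimate off the exceptional set for the bad part) matches the paper's, and your treatment of the good part and of the exceptional set is correct. But there is a genuine gap in the bad part, and it sits exactly where the non-doubling hypothesis bites.

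You assume each bad piece $b_i$ is a mean-zero measure \emph{supported on a fixed dilate of $Q_i$}. Tolsa's decomposition (Lemma \ref{C-Z decomposition}) does not provide this: the bad piece is $\beta_i = w_i\nu - \varphi_i\mu$, where $w_i\nu$ lives on $Q_i$ but $\varphi_i$ is spread over a concentric $(6,6^{m+1})$-doubling cube $R_i$ whose sidelength has \emph{no uniform control} in terms of $\ell(Q_i)$. This is forced: to cancel the mass $\int_{Q_i} f w_i\,d\mu$ (of size up to $|\nu|(Q_i)$) with a function satisfying $\sum_i|\varphi_i|\lesssim\zeta$, one needs $\mu(\supp\varphi_i)\gtrsim |\nu|(Q_i)/\zeta$, which can vastly exceed $\mu(2Q_i)$ since the stopping condition only gives $\mu(2Q_i) < 2^{n+1}|\nu|(Q_i)/\zeta$. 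Consequently your claimed pointwise bound $g_\lambda^*(b_i)(x)\lesssim \|b_i\|\,\ell(Q_i)^{\alpha}/(\ell(Q_i)+|x-z_i|)^{m+\alpha}$ for all $x\notin 2Q_i$ is false: the cancellation only yields decay measured against $\ell(R_i)$ and $|x-c_{R_i}|$, and only for $x\notin 4R_i$. In the intermediate region $4R_i\setminus 2Q_i$ one has no cancellation to exploit, the best pointwise bound on $g_\lambda^*(w_i\nu)(x)$ is $|\nu|(Q_i)/|x-c_{Q_i}|^{m}$ with no extra $\alpha$-gain, and the integral $\int_{\R^n\setminus 2Q_i}|x-c_{Q_i}|^{-m}\,d\mu(x)$ diverges in general for a power-bounded measure (each dyadic annulus contributes $O(1)$). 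The paper closes this gap with two ingredients absent from your proposal: (i) the estimate $\int_{4R_i\setminus 2Q_i}|x-c_{Q_i}|^{-m}\,d\mu(x)\lesssim 1$, which uses that there are no $(6,6^{m+1})$-doubling cubes strictly between $6Q_i$ and $R_i$, so the measures of the intermediate annuli grow geometrically; and (ii) a separate treatment of $g_{\lambda,\mu}^*(\varphi_i)$ on $4R_i$ via Cauchy--Schwarz, the hypothesised $L^2(\mu)$ boundedness, and the doubling property $\mu(4R_i)\lesssim\mu(R_i)$ together with $\mu(R_i)\|\varphi_i\|_{L^\infty(\mu)}\lesssim|\nu|(Q_i)$. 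Your far-field computation (for $x\notin 4R_i$, splitting in $t$ relative to $\ell(R_i)$ and $|x-c_{R_i}|$ and using the H\"older condition with the cancellation $\beta_i(R_i)=0$) is sound once restated with $R_i$ in place of $Q_i$, but without the intermediate-region analysis the proof does not close.
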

In order to prove that $g_{\lambda}^*$ is bounded from $\mathfrak{M}(\Rn)$ to $L^{1,\infty}(\mu) $, we need a substitute for the Calder\'{o}n-Zygmund decomposition of a measure
\cite{Tolsa}, suitable for non-doubling measures.
\begin{lemma}\label{C-Z decomposition}
Let $\mu$ be a Radon measure on $\Rn$. For any $\nu \in \mathfrak{M}(\Rn)$ with compact support and any
$\xi > 2^{n+1} ||\nu||/||\mu||$, we have:
\begin{enumerate}
\item [(a)] There exists a family of almost disjoint cubes $\{Q_i\}_i$ and a function $f \in L^1(\mu)$ such that
\begin{eqnarray}
\label{C-Z-1}|\nu|(Q_i) & > & \frac{\xi}{2^{n+1}} \mu(2Q_i);\\
\label{C-Z-2}|\nu|(\eta Q_i) & \leq & \frac{\xi}{2^{n+1}} \mu(2\eta Q_i), \ for \ any \ \eta > 2;\\
\label{C-Z-3}\nu = f \mu \ \ & {}& in \ \Rn \setminus \bigcup_i Q_i,\ \ with \ |f| \leq  \xi \ \ \mu-a.e.;
\end{eqnarray}
\item [(b)] For each $i$, let $R_i$ be a $(6, 6^{m+1})$-doubling cube concentric with $Q_i$, with $\ell(R_i) > 4\ell(Q_i)$ and denote $w_i = \mathbf{1}_{Q_i} \big/
    {\sum_{k}\mathbf{1}_{Q_k}}$. Then, there exists a family of functions $\varphi_i$ with $supp(\varphi_i) \subset R_i$, and each $\varphi_i$ with constant sign satisfying
\begin{eqnarray}
\label{C-Z-4}\int_{R_i} \varphi_i \ d\mu & = & \int_{Q_i} f w_i \ d\mu;\\
\label{C-Z-5}\sum_{i} |\varphi_i| & \lesssim & \xi;\\
\label{C-Z-6}\mu(R_i) ||\varphi_i||_{L^\infty(\mu)} & \lesssim & |\nu|(Q_i).
\end{eqnarray}
\end{enumerate}
\end{lemma}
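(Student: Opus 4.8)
The statement is, up to cosmetic changes, Tolsa's Calder\'on--Zygmund decomposition of a measure in the non-doubling setting, and the plan is to reproduce that stopping-time scheme here. For part~(a) I would first call a cube $Q$ \emph{high-density} if $|\nu|(Q) > \frac{\xi}{2^{n+1}}\mu(2Q)$. Since $\xi > 2^{n+1}||\nu||/||\mu||$, we have $\frac{\xi}{2^{n+1}}\mu(2Q) \geq ||\nu|| \geq |\nu|(Q)$ as soon as $2Q$ is large enough, so high-density cubes have uniformly bounded side length. For each point lying in some high-density cube I would select a high-density cube $Q_i$ centered (essentially) at that point whose side length is at least half the supremum of the side lengths of the high-density cubes with that center; this choice is exactly what yields $(\ref{C-Z-2})$, because any concentric dilate $\eta Q_i$ with $\eta > 2$ then has side length exceeding that supremum and is therefore not high-density, i.e. $|\nu|(\eta Q_i) \leq \frac{\xi}{2^{n+1}}\mu(2\eta Q_i)$. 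A Besicovitch-type covering argument extracts from this collection an almost disjoint subfamily $\{Q_i\}_i$, with overlap bounded by a dimensional constant, still covering the union of all high-density cubes; this gives the almost disjointness and $(\ref{C-Z-1})$--$(\ref{C-Z-2})$. Finally I would take $f := d\nu_{\mathrm{ac}}/d\mu \in L^1(\mu)$, the density of the $\mu$-absolutely continuous part of $\nu$; the Besicovitch differentiation theorem for Radon measures shows that at $\mu$-a.e. point outside $\bigcup_i Q_i$ the density of $\nu$ is bounded by $\xi$ and $\nu$ carries no singular mass there, which is $(\ref{C-Z-3})$.

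For part~(b), a $(6,6^{m+1})$-doubling cube $R_i$ concentric with $Q_i$ and with $\ell(R_i) > 4\ell(Q_i)$ exists by the usual pigeonhole argument: among the cubes $6^jQ_i$, $j \geq 1$, one must be $(6,6^{m+1})$-doubling, for otherwise $\mu(6^NQ_i) > 6^{(m+1)(N-1)}\mu(6Q_i)$ for every $N$, contradicting the power bound $\mu(6^NQ_i)\lesssim(6^N\ell(Q_i))^m$; choosing the smallest such $j$ gives $\ell(R_i) > 4\ell(Q_i)$. With such an $R_i$ fixed I would simply set
$$
\varphi_i := \Big(\int_{Q_i} f\,w_i\,d\mu\Big)\,\frac{\mathbf{1}_{R_i}}{\mu(R_i)},
$$
which is supported in $R_i$, has constant sign, and satisfies $(\ref{C-Z-4})$ by construction. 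Property $(\ref{C-Z-6})$ is then immediate, since $0 \leq w_i \leq 1$ gives $\mu(R_i)||\varphi_i||_{L^\infty(\mu)} = \big|\int_{Q_i} f\,w_i\,d\mu\big| \leq \int_{Q_i}|f|\,d\mu = |\nu_{\mathrm{ac}}|(Q_i) \leq |\nu|(Q_i)$.

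The only genuinely delicate point is the pointwise bound $(\ref{C-Z-5})$. In contrast to the $Q_i$, the dilated cubes $R_i$ have widely varying sizes and no a priori bounded overlap, so at a fixed point $x$ one must estimate $\sum_{i:\,x\in R_i}|\nu|(Q_i)/\mu(R_i)$. The way I would carry this out is to group the indices according to the side length of $R_i$; use the doubling of $R_i$, together with $(\ref{C-Z-2})$ applied to the intermediate dilates lying between $Q_i$ and $R_i$, to compare $\mu(R_i)$ with $|\nu|(Q_i)$ up to a constant; use the bounded overlap of $\{Q_i\}$ to control the sum of $|\nu|(Q_i)$ over the cubes of a fixed generation meeting a fixed region; and finally sum the resulting geometric series over the scales. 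This is precisely the estimate carried out by Tolsa, and since every ingredient it requires --- the power bound on $\mu$, $(\ref{C-Z-1})$--$(\ref{C-Z-2})$, the doubling of the $R_i$, and the bounded overlap of the $Q_i$ --- is available here verbatim, the lemma follows; see \cite{Tolsa}.
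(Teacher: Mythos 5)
The paper itself offers no proof of this lemma---it is quoted from Tolsa's book \cite{Tolsa} and used as a black box---so the only question is whether your reconstruction of Tolsa's argument is sound. Part (a) is essentially right, but the last step needs one more standard ingredient: knowing only that every cube $Q$ centered at $x$ satisfies $|\nu|(Q)\le \frac{\xi}{2^{n+1}}\mu(2Q)$ does not by itself bound $\lim_{\ell\to 0}|\nu|(Q(x,\ell))/\mu(Q(x,\ell))$, because the ratio $\mu(2Q)/\mu(Q)$ is uncontrolled for a non-doubling $\mu$. One must use the fact that, since $2^{n+1}>2^{n}$, $\mu$-a.e.\ point admits arbitrarily small $(2,2^{n+1})$-doubling cubes centered at it, and differentiate along that subsequence of scales; this is precisely where the constant $2^{n+1}$ in $(\ref{C-Z-1})$--$(\ref{C-Z-3})$ comes from, and your one-line appeal to Besicovitch differentiation silently skips it.

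The genuine gap is in part (b). The choice $\varphi_i=\bigl(\int_{Q_i}fw_i\,d\mu\bigr)\mathbf{1}_{R_i}/\mu(R_i)$ does not yield $(\ref{C-Z-5})$, and the estimate you sketch for it is not the one Tolsa performs. Grouping the indices $i$ with $x\in R_i$ by the scale of $R_i$, conditions $(\ref{C-Z-2})$ and the doubling of $R_i$ give $|\nu|(Q_i)\lesssim \xi\,\mu(R_i)$ for each single $i$, so each scale contributes at most $C\xi$ to $\sum_{i:\,x\in R_i}|\nu|(Q_i)/\mu(R_i)$---but there is no geometric decay from one scale to the next, so there is no ``geometric series over the scales'' to sum: a point can lie in doubling cubes $R_i$ of unboundedly many different sizes, all of comparable (small) $\mu$-mass, and the sum then grows linearly in the number of scales. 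This is exactly why Tolsa constructs the $\varphi_i$ inductively as $\alpha_i\mathbf{1}_{A_i}$ with $A_i\subsetneq R_i$: ordering the $R_i$ by nondecreasing side length, one proves only the \emph{integrated} bound $\sum_j\int|\varphi_{s_j}|\,d\mu\lesssim \xi\,\mu(R_k)$ over the previously built functions whose supports meet $R_k$ (here the doubling of $R_k$, $(\ref{C-Z-2})$ and the bounded overlap of the $Q_i$ enter), and then uses Chebyshev to select $A_k\subset R_k$ with $\mu(A_k)\ge \mu(R_k)/2$ on which the previous partial sum is already $\lesssim\xi$ pointwise, supporting $\varphi_k$ there. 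The pointwise bound $(\ref{C-Z-5})$ is obtained only on these carefully chosen supports, never on all of $R_k$. Your write-up should replace the fixed choice $\varphi_i\propto\mathbf{1}_{R_i}$ by this inductive construction; as it stands, the key property $(\ref{C-Z-5})$ is unproved and, for your choice of $\varphi_i$, not true in general.
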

\qed

\noindent\textbf{Proof of Proposition \ref{Pro-2-M}}. For simplicity we may assume that $\nu$ has compact support and $\xi > 2^{n+1}||\nu|| / ||\mu||$. Applying Lemma $\ref{C-Z decomposition}$, we have the decomposition
$\nu=g \mu + \beta$, where
$$
g \mu = \mathbf{1}_{\Rn \setminus \bigcup Q_i} \nu + \sum_i \varphi_i \mu,\ \
\beta = \sum_i \beta_i := \sum_i ( w_i \nu - \varphi_i \mu).
$$
Then we have
\begin{align*}
\mu\big(\{x \in \Rn; g_\lambda^* \nu(x) > \xi \}\big)
&\leq \mu\Big(\bigcup_i 2 Q_i \Big) + \mu \Big(\Big\{x \in \Rn \setminus \bigcup_i 2Q_i; g_{\lambda,\mu}^*(g)(x) > \xi/2 \Big\}\Big) \\
&\quad +\mu\Big(\Big\{x \in \Rn \setminus \bigcup_i 2Q_i; g_\lambda^*(\beta)(x) > \xi/2 \Big\}\Big)\\
&:= \mathcal{I}_{c} + \mathcal{I}_{g} + \mathcal{I}_{b}.
\end{align*}
For the first part $\mathcal{I}_{c}$ , by $(\ref{C-Z-1})$ we have
$$
\mathcal{I}_{c} \lesssim \frac{1}{\xi}\sum_{i} |\nu|(Q_i) \leq \frac{1}{\xi}||\nu||.
$$
For the good part $\mathcal{I}_{g}$, one obtains that
\begin{align*}
\mathcal{I}_{g}
&\lesssim \frac{1}{\xi^2} \int_{\Rn \setminus \bigcup_i Q_i}g_{\lambda,\mu}^*(g)(x)^2 d\mu(x)
\lesssim \frac{1}{\xi^2} \int_{\Rn \setminus \bigcup_i Q_i}|g(x)|^2 d\mu(x)
\lesssim \frac{1}{\xi} \int_{\Rn \setminus \bigcup_i Q_i}|g(x)| d\mu(x) \\
&\leq \frac{1}{\xi} |\nu|\Big(\Rn \setminus \bigcup_i Q_i\Big) + \frac{1}{\xi} \int_{\Rn} \Big|\sum_i \varphi_i(x)\Big| d\mu(x)\leq \frac{1}{\xi} ||\nu|| + \frac{1}{\xi} \sum_i \int_{R_i} |\varphi_i(x)| d\mu(x)\\&
\lesssim \frac{1}{\xi} ||\nu|| + \frac{1}{\xi} \sum_i ||\varphi_i||_{L^\infty(\mu)} \mu(R_i) \lesssim \frac{1}{\xi} ||\nu|| + \frac{1}{\xi} \sum_i |\nu|(Q_i)
\lesssim \frac{1}{\xi} ||\nu||.
\end{align*}
For the bad part $\mathcal{I}_{b}$, we may get
\begin{align*}
\mathcal{I}_{b}
&\lesssim \frac{1}{\xi} \int_{\Rn \setminus \bigcup_i 2 Q_i} g_\lambda^*(\beta)(x) d\mu(x)
\lesssim \frac{1}{\xi} \sum_i \int_{\Rn \setminus 2 Q_i} g_\lambda^*(\beta_i)(x) d\mu(x) \\
&\lesssim \frac{1}{\xi} \sum_i \int_{\Rn \setminus \bigcup_i 4 R_i} g_\lambda^*(\beta_i)(x) d\mu(x)
+ \frac{1}{\xi} \sum_i \int_{4R_i \setminus \bigcup_i 2Q_i} g_\lambda^*(w_i \nu)(x) d\mu(x) \\
&\quad +  \frac{1}{\xi} \sum_i \int_{4R_i \setminus 2 Q_i} g_{\lambda,\mu}^*(\varphi_i)(x) d\mu(x):= \frac{1}{\xi}(\mathcal{I}_{b,1} + \mathcal{I}_{b,2} + \mathcal{I}_{b,3}).
\end{align*}
Note that we may bound $\mathcal{I}_{b,3}$ in the following way:
\begin{align*}
\mathcal{I}_{b,3}
&\leq \sum_i \int_{4R_i} g_{\lambda,\mu}^*(\varphi_i)(x) d\mu(x)
\leq \sum_i \mu(4R_i)^{1/2} \bigg( \int_{4R_i} g_{\lambda,\mu}^*(\varphi_i)(x)^2 d\mu(x)\bigg)^{1/2} \\
&\lesssim \sum_i \mu(R_i)^{1/2} \bigg( \int_{R_i} |\varphi_i(x)|^2 d\mu(x)\bigg)^{1/2}
\lesssim \sum_i \mu(R_i)||\varphi_i||_{L^\infty(\mu)} \\
&\lesssim \sum_i |\nu|(Q_i)
\lesssim ||\nu||.
\end{align*}
Therefore, in order to obtain the weak $(1,1)$ type bound, we only need to show
$$\mathcal{I}_{b,i} \lesssim ||\nu||,\ i=1,2.$$
Thus, it is enough to prove that for each $i$ it holds that
\begin{equation}\label{bad-1}
\int_{\Rn \setminus 4R_i} g_\lambda^*(\beta_i)(x) d\mu(x) \lesssim |\nu|(Q_i)
\end{equation}
and
\begin{equation}\label{bad-2}
\int_{4R_i \setminus 2Q_i} g_\lambda^*(w_i \nu)(x) d\mu(x) \lesssim |\nu|(Q_i).
\end{equation}
%%%%%%%%%%%%%%%%%%%%%%%%%%%%%%%%%%%%%%%%%%%%%%%%%%%%%%%%%%%%%
\vspace{0.3cm}
\noindent\textbf{$\bullet$ Estimating bad part $\mathcal{I}_{b,1}$.}
To gain the inequality $(\ref{bad-1})$, since $\big\| \beta_i \big\| \lesssim |\nu|(Q_i)$, it suffices to prove that
\begin{equation}\label{bad-3}
g_\lambda^*(\beta_i)(x) \lesssim \bigg( \frac{\ell(R_i)^\alpha}{|x-c_{R_i}|^{m+\alpha}} + \frac{\ell(R_i)^{\alpha/2}}{|x-c_{R_i}|^{m+\alpha/2}} \bigg) \big\| \beta_i \big\|, \ x \in
\Rn \setminus 4R_i.
\end{equation}
\begin{proof}
The first step is to split
\begin{align*}
g_\lambda^*(\beta_i)(x)^2
&\lesssim \iint_{\Rn \times (0,\ell(R_i))} \Big(\frac{t}{t+|x-y|}\Big)^{m\lambda} |\theta_t \beta_i(y)|^2 \frac{d\mu(y) dt}{t^{m+1}} \\
&\quad + \iint_{\Rn \times [\ell(R_i),|x-c_{R_i}|]} \Big(\frac{t}{t+|x-y|}\Big)^{m\lambda} |\theta_t \beta_i(y)|^2 \frac{d\mu(y) dt}{t^{m+1}} \\
&\quad + \iint_{\Rn \times (|x-c_{R_i}|,+\infty)} \Big(\frac{t}{t+|x-y|}\Big)^{m\lambda} |\theta_t \beta_i(y)|^2 \frac{d\mu(y) dt}{t^{m+1}}\\
&:= \mathcal{A}_1(x) + \mathcal{A}_2(x) + \mathcal{A}_3(x).
\end{align*}
For $\mathcal{A}_3$, by the vanishing property $\beta_i(R_i)=0$ and H\"{o}lder condition, we gain that
\begin{align*}
|\theta_t \beta_i(y)| &= \bigg| \int_{R_i} (s_t(y,z)-s_t(y,c_{R_i})) d\beta_i(z) \bigg| \\
&\lesssim \int_{R_i} \frac{|z-c_{R_i}|^\alpha}{(t+|y-z|)^{m+\alpha}} d|\beta_i|(z)
\lesssim \frac{\ell(R_i)^\alpha}{t^{m+\alpha}} || \beta_i ||.
\end{align*}
Furthermore, we have
\begin{align*}
\mathcal{A}_3(x)
&\lesssim || \beta_i || \bigg(\int_{|x-c_{R_i}|}^{\infty} \frac{\ell(R_i)^{2\alpha}}{t^{2m+2\alpha+1}} \int_{\Rn}\Big(\frac{t}{t+|x-y|}\Big)^{m\lambda} \frac{d\mu(y)}{t^m} dt
\bigg)^{1/2} \\
&\lesssim || \beta_i || \bigg(\int_{|x-c_{R_i}|}^{\infty} \frac{\ell(R_i)^{2\alpha}}{t^{2m+2\alpha+1}}dt \bigg)^{1/2}
\simeq || \beta_i || \frac{\ell(R_i)^\alpha}{|x-c_{R_i}|^{m+\alpha}}.
\end{align*}

We then consider $\mathcal{A}_1(x)$. If $y \in \Rn$ satisfies $|y-c_{R_i}| \leq \frac{1}{2}|x-c_{R_i}|$, then
$|x-y| \geq |x-c_{R_i}|-|y-c_{R_i}| \geq \frac12 |x-c_{R_i}|$. Hence,
\begin{align*}
&\int_{y: |y-c_{R_i}| \leq \frac{1}{2}|x-c_{R_i}|} |\theta_t \beta_i(y)|^2 \Big(\frac{t}{t+|x-y|}\Big)^{m\lambda} \frac{d\mu(y) }{t^m} \\
&\lesssim \int_{y: |y-c_{R_i}| \leq \frac{1}{2}|x-c_{R_i}|}\bigg(\int_{R_i}\frac{t^\alpha}{(t+|y-z|)^{m+\alpha}} d|\beta_i|(z)
\bigg)^2 \Big(\frac{t}{t+|x-y|}\Big)^{m\lambda} \frac{d\mu(y)}{t^m} \\
&\lesssim \frac{t^{m\lambda - 2m}}{(t+|x-c_{R_i}|)^{m\lambda}} t^{-m} \int_{\Rn} \bigg(\int_{R_i} \Big(\frac{t}{t+|x-y|}\Big)^{m+\alpha} d|\beta_i|(z)\bigg)^2 d\mu(y) \\
&\lesssim \frac{t^{2\alpha}}{(t+|x-c_{R_i}|)^{2m+2\alpha}} t^{-m} \Big\| \Big(\frac{t}{t+|\cdot|}\Big)^{m+\alpha} \Big\|_{L^2(\mu)}^2 \big\| \beta_i \big\|^2 \\
&\lesssim \frac{t^{2\alpha}}{|x-c_{R_i}|^{2m+2\alpha}} || \beta_i ||^2.
\end{align*}
If $y,z\in \Rn$ satisfies $|y-c_{R_i}| > \frac{1}{2}|x-c_{R_i}|$ and $z \in R_i$, then
$$|z-c_{R_i}| \leq \frac12 \ell(R_i) \leq \frac12 \cdot \frac23 |x-c_{R_i}| = \frac13 |x-c_{R_i}|$$
and
$$|y-z| \geq |y-c_{R_i}| - |z-c_{R_i}| > \frac12 |x-c_{R_i}| - \frac13 |x-c_{R_i}| = \frac16 |x-c_{R_i}|.$$
Therefore, we have
\begin{align*}
&\int_{y: |y-c_{R_i}| > \frac{1}{2}|x-c_{R_i}|} |\theta_t \beta_i(y)|^2 \Big(\frac{t}{t+|x-y|}\Big)^{m\lambda} \frac{d\mu(y) }{t^m} \\
&\lesssim \int_{y: |y-c_{R_i}| > \frac{1}{2}|x-c_{R_i}|}\bigg(\int_{R_i}\frac{t^\alpha}{(t+|y-z|)^{m+\alpha}}d|\beta_i|(z)
\bigg)^2 \Big(\frac{t}{t+|x-y|}\Big)^{m\lambda} \frac{d\mu(y)}{t^m} \\
&\lesssim \frac{t^{2\alpha}}{|x-c_{R_i}|^{2m+2\alpha}} || \beta_i ||^2 \int_{\Rn} \Big(\frac{t}{t+|x-y|}\Big)^{m+\alpha} \frac{d\mu(y)}{t^m}\lesssim \frac{t^{2\alpha}}{|x-c_{R_i}|^{2m+2\alpha}} || \beta_i ||^2.
\end{align*}

Finally, we consider the estimate for $\mathcal{A}_2(x)$. It follows from H\"{o}lder condition that
\begin{align*}
|\theta_t \beta_i(y)| &= \bigg| \int_{R_i} (s_t(y,z)-s_t(y,c_{R_i})) d\beta_i(z) \bigg|
\lesssim \int_{R_i} \frac{|z-c_{R_i}|^\alpha}{(t+|y-z|)^{m+\alpha}} d|\beta_i|(z) \\
&\lesssim \ell(R_i)^{\alpha/2} \int_{R_i} \frac{t^{\alpha/2}}{(t+|y-z|)^{m+\alpha}} d|\beta_i|(z).
\end{align*}
This gives us that
\begin{align*}
&\int_{\Rn} |\theta_t \beta_i(y)|^2 \Big(\frac{t}{t+|x-y|}\Big)^{m\lambda} \frac{d\mu(y)}{t^m}
\lesssim \ell(R_i)^\alpha \frac{t^\alpha}{|x-c_{R_i}|^{2m+2\alpha}} || \beta_i ||^2.
\end{align*}
Therefore,
\begin{align*}
\mathcal{A}_2(x)
\lesssim  \bigg( \int_{\ell(R_i)}^{|x-c_{R_i}|} \frac{\ell(R_i)^\alpha}{|x-c_{R_i}|^{2m+2\alpha}} || \beta_i ||^2 t^{\alpha-1} dt \bigg)^{1/2}
\simeq \frac{\ell(R_i)^{\alpha/2}}{|x-c_{R_i}|^{m+\alpha/2}} || \beta_i ||.
\end{align*}
\end{proof}
%%%%%%%%%%%%%%%%%%%%%%%%%%%%%%%%%%%%%%%%%%%%%%%%%%%%%%%%%
\vspace{0.3cm}
\noindent\textbf{$\bullet$ Estimating bad part $\mathcal{I}_{b,2}$.}
In order to get the inequality$(\ref{bad-2})$, it is sufficient to prove the following estimate
\begin{equation}\label{bad-4}
g_\lambda^*(w_i \nu)(x) \lesssim \frac{|\nu|(Q_i)}{|x-c_{Q_i}|^m} , \ x \in 4R_i \setminus 2Q_i.
\end{equation}
Actually,
\begin{align*}
\int_{4R_i \setminus 2Q_i} \frac{d\mu(x)}{|x-c_{Q_i}|^m}
&\leq \bigg(\int_{4R_i \setminus R_i} +  \int_{R_i \setminus 6Q_i} + \int_{6 Q_i \setminus Q_i}\bigg) \frac{d\mu(x)}{|x-c_{Q_i}|^m}.
\end{align*}
It is easy to see that
$$\bigg(\int_{4R_i \setminus R_i} + \int_{6 Q_i \setminus Q_i}\bigg) \frac{d\mu(x)}{|x-c_{Q_i}|^m}
\lesssim \frac{\mu(4R_i)}{\ell(R_i)^m} + \frac{\mu(6Q_i)}{\ell(Q_i)^m} \lesssim 1.$$
Moreover, there are no $(6,6^{m+1})$-doubling cubes of the form $6^kQ_i$ such that $6Q_i \subsetneq 6^k Q_i \subsetneq R_i$.
Let $ N_i := \min\{ k; R_i \subset 6^k \cdot 6Q_i \}$.
Hence, $$ \mu(6 \cdot 6^k Q_i) > 6^{m+1} \mu(6^k Q_i), \ k=1,\ldots,N_i.$$
Then,
$$ \mu(6^{N_i}\cdot6Q_i) > 6^{(m+1)(N_i-k)} \mu(6^k Q_i).$$
Therefore, we have
\begin{align*}
\int_{R_i \setminus 6Q_i} \frac{d\mu(x)}{|x-c_{Q_i}|^m}
\leq \sum_{k=1}^{N_i} \int_{6^{k+1}Q_i \setminus 6^k Q_i} \frac{d\mu(x)}{|x-c_{Q_i}|^m}
\lesssim \sum_{k=1}^{N_i} \frac{\mu(6^{k+1}Q_i)}{\ell(6^k Q_i)^m}
\lesssim  \sum_{k=1}^{N_i} 6^{k-N_i} \frac{\mu(6^{N_i+1}Q_i)}{\ell(6^{N_i+1} Q_i)^m}
\lesssim 1.
\end{align*}

Now let us show that $(\ref{bad-4})$ is true. We perform the decomposition as follows:
\begin{align*}
g_\lambda^*(w_i \nu)(x)
&\leq \sum_{j=1}^3\bigg(\iint_{\Xi_j} |\theta_t (w_i \nu)(y)|^2 \Big(\frac{t}{t+|x-y|}\Big)^{m\lambda} \frac{d\mu(y) dt}{t^{m+1}}\bigg)^{1/2}
:= \sum_{j=1}^3 \mathcal{B}_j(x),\ \ x \in 4R_i \setminus 2Q_i,
\end{align*}
where
\begin{eqnarray*}
\Xi_1 &=& \Rn \times [ |x-c_{Q_i}|, +\infty),\\
\Xi_2 &=& B(c_{Q_i},0.6|x-c_{Q_i}|) \times (0,|x-c_{Q_i}|),\\
\Xi_3 &=& B(c_{Q_i},0.6|x-c_{Q_i}|)^c \times (0,|x-c_{Q_i}|).
\end{eqnarray*}

The first part $\mathcal{B}_1$ is easy. Using size condition, we get
\begin{align*}
|\theta_t (w_i \nu)(y)|
\lesssim \int_{Q_i} \frac{t^\alpha}{(t+|y-z|)^{m+\alpha}} d|\nu|(z)
\lesssim t^{-m} |\nu|(Q_i).
\end{align*}
Then, we have
\begin{align*}
\mathcal{B}_1(x)
&\lesssim |\nu|(Q_i) \bigg( \int_{|x-c_{Q_i}|}^{+\infty} \frac{1}{t^{2m+1}} \int_{\Rn} \Big(\frac{t}{t+|x-y|}\Big)^{m\lambda} \frac{d\mu(y)}{t^m} dt \bigg)^{1/2} \\
&\lesssim |\nu|(Q_i) \bigg( \int_{|x-c_{Q_i}|}^{+\infty} \frac{1}{t^{2m+1}} dt \bigg)^{1/2}
\simeq \frac{|\nu|(Q_i)}{|x-c_{Q_i}|^m}.
\end{align*}

As for $\mathcal{B}_3(x)$, we note that when $(y,t) \in \Xi_3$ and $z \in Q_i$, it holds that
$$|y-z| \geq |y-c_{Q_i}| - |z-c_{Q_i}| \geq 0.6|x-c_{Q_i}| - 0.5 |x-c_{Q_i}| \gtrsim |x-c_{Q_i}|.$$
Thus,
$$ \theta_t(w_i \nu)(y) \lesssim \frac{t^\alpha}{|x-c_{Q_i}|^{m+\alpha}} |\nu|(Q_i).$$
And hence
\begin{align*}
\mathcal{B}_3(x)
\lesssim \frac{|\nu|(Q_i)}{|x-c_{Q_i}|^{m+\alpha}} \bigg( \int_{0}^{|x-c_{Q_i}|} t^{2\alpha-1} \int_{\Rn} \Big(\frac{t}{t+|x-y|}\Big)^{m\lambda} \frac{d\mu(y)}{t^m} dt \bigg)^{1/2}
\lesssim \frac{|\nu|(Q_i)}{|x-c_{Q_i}|^m}.
\end{align*}

It remains only to estimate $\mathcal{B}_2$. For $(y,t) \in \Xi_2$,
$$ |x-y| \geq |x-c_{Q_i}| - |y-c_{Q_i}| \geq 0.4 |x-c_{Q_i}|.$$
So we have
\begin{align*}
&\int_{y: |y-c_{Q_i}| \leq \frac{2}{3}|x-c_{Q_i}|} |\theta_t (w_i \nu)(y)|^2 \Big(\frac{t}{t+|x-y|}\Big)^{m\lambda} \frac{d\mu(y)}{t^m} \\
&\lesssim \frac{t^{m\lambda - m}}{(t+|x-c_{Q_i}|)^{m\lambda}}  \int_{\Rn} \bigg(\int_{Q_i} \frac{t^\alpha}{(t+|x-y|)^{m+\alpha}} d|\nu|(z)\bigg)^2 d\mu(y) \\
&\lesssim \frac{t^{2\alpha}}{(t+|x-c_{Q_i}|)^{2m+2\alpha}} t^{-m} \Big\| \Big(\frac{t}{t+|\cdot|}\Big)^{m+\alpha} \Big\|_{L^2(\mu)}^2 |\nu|(Q_i)^2 \\
&\lesssim \frac{t^{2\alpha}}{|x-c_{Q_i}|^{2m+2\alpha}} |\nu|(Q_i)^2.
\end{align*}
Therefore, the desired result can be obtained
\begin{align*}
\mathcal{B}_2(x)
\lesssim \frac{|\nu|(Q_i)}{|x-c_{Q_i}|^{m+\alpha}} \bigg( \int_{0}^{|x-c_{Q_i}|} t^{2\alpha-1}dt \bigg)^{1/2}
\lesssim \frac{|\nu|(Q_i)}{|x-c_{Q_i}|^{m}}.
\end{align*}
Thus, we have finished the proof of Proposition $\ref{Pro-2-M}$.
\qed
%%%%%%%%%%%%%%%%%%%%%%%%%%%%%%%%%%%%%%%%%%%%%%%%%%%%%%%%%%%%%%%%%%%%%%%%%%%%%
%%%%%%%%%%%%%%%%%%%%%%%%%%%%%%%%%%%%%%%%%%%%%%%%%%%%%%%%%%%%%%%%%%%%%%%%%%%%%
\section{The $L^\infty(\mu) \rightarrow RBMO(\mu) $ Bound}\label{Sec-RBMO}
In this section, we further investigate what kind of boundedness we will obtain from $L^2(\mu)$ boundedness of $g_{\lambda,\mu}^*$.
In addition, the classical techniques used in Section $\ref{Sec-lambda}$ will be utilized again.
%%%%%%%%%%%%%%%%%%%%%%%%%%%
\begin{definition}
The regular bounded mean oscillations space $RBMO$ is defined as follows. Let $\kappa > 1$. A function $f \in L^1_{loc}(\mu)$ belongs to $RBMO(\mu)$ if there exists a constant $C$ and for any ball $B$, a constant $f_B$ (It does not demand that $f_B$ be the average $\langle f \rangle_B^{\mu} = \frac{1}{\mu(B)}\int_B f d\mu$.), such that one has
$$ \int_{B}|f-f_B| d\mu \leq C \mu(\kappa B),$$
and, whenever $B \subset B'$ are balls,
$$
|f_B - f_{B'}| \leq C \bigg( 1 + \int_{\kappa B' \setminus B} \frac{1}{|x-c_B|^m} d\mu(x)\bigg).
$$
\end{definition}
%%%%%%%%%%%%%%%%%%%%%%%%%%%%%%%%%%%
\begin{proposition}\label{Pro-RBMO}
Let $\lambda > 2$, $0 < \alpha \leq m(\lambda-2)/2$ and $\mu$ be a power bounded measure. If $g_{\lambda,\mu}^*$ is bounded on $L^2(\mu)$, then
$g_{\lambda,\mu}^*$ is bounded from $L^{\infty}(\mu)$ to $RBMO(\mu)$.
\end{proposition}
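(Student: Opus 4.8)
The plan is to run the classical ``$L^2$-bounded operator maps $L^\infty$ into $RBMO$'' scheme, adapted to the square function $g_{\lambda,\mu}^*$: the $L^2(\mu)$-hypothesis handles the local piece, while Lemmas~\ref{U(f)} and~\ref{T(f)} handle the long-range piece. Normalize $\|f\|_{L^\infty(\mu)}=1$. For a ball (equivalently, cube in the $\ell^\infty$ metric) $B$, write $f=f\mathbf 1_{2B}+f\mathbf 1_{\Rn\setminus 2B}$ and set $f_B:=m_B\big(g_{\lambda,\mu}^*(f\mathbf 1_{\Rn\setminus 2B})\big)$, where $m_B h=\langle h\rangle_B^\mu=\mu(B)^{-1}\int_B h\,d\mu$; I would then verify the two defining estimates of $RBMO(\mu)$ with $\kappa=2$. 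Throughout I use that $g_{\lambda,\mu}^*$ is the $L^2\big(\tfrac{d\mu\,dt}{t^{m+1}}\big)$-norm of $\mathcal G_{t,y}(f\mu)(x)$, an expression \emph{linear} in $f$, so the reverse triangle inequality yields the pointwise bound $|g_{\lambda,\mu}^*(g+h)-g_{\lambda,\mu}^*(h)|\le g_{\lambda,\mu}^*(g)$; this substitutes for the linearity one would exploit for a Calder\'on--Zygmund operator.

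For the first inequality, $\int_B|g_{\lambda,\mu}^*(f)-f_B|\,d\mu\le\int_B g_{\lambda,\mu}^*(f\mathbf 1_{2B})\,d\mu+\int_B|g_{\lambda,\mu}^*(f\mathbf 1_{\Rn\setminus 2B})-f_B|\,d\mu$. The first integral is at most $\mu(B)^{1/2}\|g_{\lambda,\mu}^*(f\mathbf 1_{2B})\|_{L^2(\mu)}\lesssim\mu(B)^{1/2}\|f\mathbf 1_{2B}\|_{L^2(\mu)}\le\mu(B)^{1/2}\mu(2B)^{1/2}\le\mu(2B)$, by the hypothesis and the power bound on $\mu$. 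The second, since $f_B$ is the average of $g_{\lambda,\mu}^*(f\mathbf 1_{\Rn\setminus 2B})$ over $B$, is bounded by $\mu(B)^{-1}\iint_{B\times B}|g_{\lambda,\mu}^*(f\mathbf 1_{\Rn\setminus 2B})(x)-g_{\lambda,\mu}^*(f\mathbf 1_{\Rn\setminus 2B})(x')|\,d\mu(x')\,d\mu(x)$, and (again by the reverse triangle inequality) each integrand is $\le\mathcal T(f)(x,x')\lesssim M_\mu f(x)\le1$ by Lemma~\ref{T(f)} taken with the cube $B$. Hence $\int_B|g_{\lambda,\mu}^*(f)-f_B|\,d\mu\lesssim\mu(2B)=\mu(\kappa B)$.

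For the second inequality, with $B\subset B'$ one has $2B\subset 2B'$, so $f\mathbf 1_{\Rn\setminus 2B}=f\mathbf 1_{\Rn\setminus 2B'}+f\mathbf 1_{2B'\setminus 2B}$, and I split $f_B-f_{B'}=\big[m_B g_{\lambda,\mu}^*(f\mathbf 1_{\Rn\setminus 2B})-m_B g_{\lambda,\mu}^*(f\mathbf 1_{\Rn\setminus 2B'})\big]+\big[m_B g_{\lambda,\mu}^*(f\mathbf 1_{\Rn\setminus 2B'})-m_{B'} g_{\lambda,\mu}^*(f\mathbf 1_{\Rn\setminus 2B'})\big]$. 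The second bracket is at most the oscillation of $g_{\lambda,\mu}^*(f\mathbf 1_{\Rn\setminus 2B'})$ over $B'\supset B$, which by Lemma~\ref{T(f)} taken with the cube $B'$ is $\lesssim1$. For the first bracket, the reverse triangle inequality gives $|g_{\lambda,\mu}^*(f\mathbf 1_{\Rn\setminus 2B})-g_{\lambda,\mu}^*(f\mathbf 1_{\Rn\setminus 2B'})|\le g_{\lambda,\mu}^*(f\mathbf 1_{2B'\setminus 2B})$; then for $x\in B$, Lemma~\ref{U(f)} bounds $g_{\lambda,\mu}^*(f\mathbf 1_{2B'\setminus 2B})(x)$ by $\big(\int_0^\infty\mathcal V_t(f\mathbf 1_{2B'\setminus 2B})(x)^2\tfrac{dt}{t}\big)^{1/2}$, and Minkowski's integral inequality together with $\int_0^\infty\frac{t^{2\alpha}}{(t+a)^{2m+2\alpha}}\tfrac{dt}{t}\simeq a^{-2m}$ gives $g_{\lambda,\mu}^*(f\mathbf 1_{2B'\setminus 2B})(x)\lesssim\int_{2B'\setminus 2B}\frac{d\mu(z)}{|x-z|^m}\lesssim\int_{\kappa B'\setminus B}\frac{d\mu(z)}{|z-c_B|^m}$, where I used $|x-z|\simeq|z-c_B|$ for $x\in B$, $z\notin 2B$, and $2B'\setminus 2B\subset\kappa B'\setminus B$ with $\kappa=2$. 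Averaging over $x\in B$ yields $|f_B-f_{B'}|\lesssim 1+\int_{\kappa B'\setminus B}|x-c_B|^{-m}\,d\mu(x)$, which is the required bound.

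The main point requiring care is foundational rather than computational: over an infinite non-doubling measure the long-range piece $g_{\lambda,\mu}^*(f\mathbf 1_{\Rn\setminus 2B})$, and hence $f_B$, may be identically $+\infty$ (already the classical Poisson $g_\lambda^*$ of a non-decaying bounded function is infinite), so the assertion ``$g_{\lambda,\mu}^*(f)\in RBMO(\mu)$'' is to be read in the customary way for square functions: either $g_{\lambda,\mu}^*(f)\equiv\infty$, or it represents a function of $RBMO(\mu)$ with norm $\lesssim\|f\|_{L^\infty(\mu)}$, and all the estimates above take place in the latter regime. Apart from this convention, no genuinely new ingredient is needed: the local estimate is exactly the $L^2(\mu)$ hypothesis, the long-range oscillation control is precisely Lemma~\ref{T(f)} (whose proof already splits $\R^{n+1}_+$ into the four regions $\mathfrak D_i$ and performs the $g_\lambda^*$-type bookkeeping), and the annular term $g_{\lambda,\mu}^*(f\mathbf 1_{2B'\setminus 2B})$ is dispatched by the elementary Minkowski computation above together with Lemma~\ref{U(f)} — these being the ``classical techniques of Section~\ref{Sec-lambda}'' to which the section's opening alludes.
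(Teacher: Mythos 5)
Your proof is correct and follows the same overall scheme as the paper's --- split $f$ into a local piece handled by the $L^2(\mu)$ hypothesis and a long-range piece whose oscillation over the ball is $O(1)$, then control the regularity condition of $RBMO(\mu)$ by an annular estimate --- but both technical steps are implemented differently, and more efficiently. Where the paper proves a separate key lemma (Lemma \ref{key lemma}) by writing $|a-b|\leq |a^2-b^2|^{1/2}$ and splitting $\R^{n+1}_+$ into five regions, deferring most details to ``the method of the proof of Lemma \ref{good-lambda}'', you observe that $g_{\lambda,\mu}^*$ is the $L^2\big(\tfrac{d\mu\,dt}{t^{m+1}}\big)$-norm of an expression linear in $f$, so the reverse triangle inequality reduces the oscillation bound exactly to the quantity $\mathcal{T}(f)(x,x')$ already estimated in Lemma \ref{T(f)}; this recycles existing work instead of redoing the region-by-region bookkeeping. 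For the second $RBMO$ condition the paper constructs a telescoping chain of balls $B=B_0\subset\cdots\subset B_K=B'$ with comparable consecutive radii and, on each annulus, splits the $t$-integral at $r(B_k)$ into the two pieces $\Lambda_1,\Lambda_2$; you instead treat the whole annulus $2B'\setminus 2B$ in one stroke via Lemma \ref{U(f)} and Minkowski's integral inequality, using $\big\| t^{\alpha}(t+|x-z|)^{-m-\alpha}\big\|_{L^2(dt/t)}\simeq |x-z|^{-m}$, which lands directly on $\int_{\kappa B'\setminus B}|z-c_B|^{-m}\,d\mu(z)$ and makes the chain of balls unnecessary. Your choice of $f_B$ as an average of the long-range piece rather than its value at the center is immaterial for $RBMO(\mu)$ (the paper itself notes the constants need not be averages), and your closing remark about the long-range piece possibly being identically infinite addresses a foundational point that the paper's own proof leaves implicit. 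I see no gaps.
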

In order to prove that $g_{\lambda,\mu}^*$ is bounded from $L^\infty(\mu)$ to $RBMO(\mu)$, we need to check two conditions such that $g_{\lambda,\mu}^*(f)$ belongs to $RBMO(\mu)$. The following lemma is essential to our proof.
\begin{lemma}\label{key lemma}
Suppose that $f \in L^\infty(\mu)$ with $||f||_{L^{\infty}(\mu)} \leq 1$. Then for fixed $x_0 \in \Rn$ and $r>0$, there holds that for any $x \in B(x_0,r)$ and large enough $\kappa \geq 1$
$$
|g_{\lambda,\mu}^*(f \mathbf{1}_{\Rn \setminus B(x_0,\kappa r)})(x) - g_{\lambda,\mu}^*(f \mathbf{1}_{\Rn \setminus B(x_0,\kappa r)})(x_0)| \lesssim 1.
$$
\end{lemma}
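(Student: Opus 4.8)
The plan is to reduce the difference of the two $g$-functions to a single square-function quantity already controlled by Lemma \ref{T(f)}, and then to finish with the crude bound $M_\mu f \le \|f\|_{L^\infty(\mu)}$.

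First I would write, as in Lemma \ref{T(f)}, $\mathcal{G}_{t,y}(g)(x) = \big(\tfrac{t}{t+|x-y|}\big)^{m\lambda/2}\theta_t^\mu g(y)$, and note that $g_{\lambda,\mu}^*(g)(x)$ is exactly the norm of the function $(y,t)\mapsto \mathcal{G}_{t,y}(g)(x)$ in $L^2\big(\R^{n+1}_+, \tfrac{d\mu(y)\,dt}{t^{m+1}}\big)$. Hence, by the reverse triangle inequality in this $L^2$ space, with $g := f\mathbf{1}_{\Rn\setminus B(x_0,\kappa r)}$,
\[
\big|g_{\lambda,\mu}^*(g)(x) - g_{\lambda,\mu}^*(g)(x_0)\big|
\le \bigg(\iint_{\R^{n+1}_+}\big|\mathcal{G}_{t,y}(g)(x) - \mathcal{G}_{t,y}(g)(x_0)\big|^2 \frac{d\mu(y)\,dt}{t^{m+1}}\bigg)^{1/2}.
\]

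Next I would match the geometry by setting $B := B(x_0,\kappa r/2)$, so that $\Rn\setminus B(x_0,\kappa r) = \Rn\setminus 2B$, and, provided $\kappa > 2$ (in particular for $\kappa$ large enough), both $x$ and $x_0$ lie in $B$ since $|x-x_0| < r < \kappa r/2$. Then the right-hand side above is precisely $\mathcal{T}(f)(x,x_0)$ in the notation of Lemma \ref{T(f)} applied to the ball $B$. Because $0 < \alpha \le m(\lambda-2)/2$, the condition $m\lambda \ge 2m+2\alpha$ used in the proof of Lemma \ref{T(f)} holds, so that lemma gives $\mathcal{T}(f)(x,x_0) \lesssim M_\mu f(x)$. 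Since $\|f\|_{L^\infty(\mu)} \le 1$ we have $M_\mu f(x) \le 1$, and the claim follows.

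The proof is thus essentially a bookkeeping reduction to Lemma \ref{T(f)}; the only point requiring care is the geometric identification $2B = B(x_0,\kappa r)$ together with $x,x_0 \in B$, which forces $\kappa$ to be taken larger than $2$ (any sufficiently large $\kappa$ works). If Lemma \ref{T(f)} were not already available, the genuine work would be the four-region splitting of $\R^{n+1}_+$ according to the relative sizes of $|y-x|$, $|y-x_0|$ and $t$, exactly as in its proof, using the size and H\"older estimates on $s_t$ together with the power bound $\mu(B(x,r))\lesssim r^m$ to sum the resulting annular contributions against the Lebesgue measure of $\Rn\setminus 2B$; but here that work is already done, so no new obstacle arises.
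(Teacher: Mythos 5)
Your proof is correct, but it follows a cleaner route than the paper's. The paper first replaces the difference of the two square functions by $|a-b|\le |a^2-b^2|^{1/2}$, then splits $\R^{n+1}_+$ into five regions $\Theta_1,\dots,\Theta_5$ according to the relative sizes of $|y-x|$, $|y-x_0|$, $t$ and $2r$; the small-$t$ region $\Theta_1=\Rn\times(0,2r]$ is handled by bounding each square function separately via the $\mathcal{A}_1$-type computation of Section \ref{Sec-M} (giving $t^{2\alpha}r^{-2\alpha}$, integrable against $dt/t$ on $(0,2r]$), and the remaining four regions are dismissed as ``a routine application of the method of the proof of Lemma \ref{good-lambda}'' without details. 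You instead apply the reverse triangle inequality in $L^2\big(\R^{n+1}_+,\,d\mu(y)\,dt/t^{m+1}\big)$ and observe that the resulting quantity is exactly $\mathcal{T}(f)(x,x_0)$ for the ball $B=B(x_0,\kappa r/2)$, so a single invocation of Lemma \ref{T(f)} finishes the argument; this is legitimate since for $\kappa>2$ both $x$ and $x_0$ lie in $B$ and $\Rn\setminus B(x_0,\kappa r)=\Rn\setminus 2B$, and since the standing hypothesis $0<\alpha\le m(\lambda-2)/2$ of Proposition \ref{Pro-RBMO} supplies the condition $m\lambda\ge 2m+2\alpha$ used inside the proof of Lemma \ref{T(f)}. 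In effect your argument makes precise the part the paper leaves implicit: Lemma \ref{T(f)} already contains the four-region splitting (its $\mathfrak{D}_1,\dots,\mathfrak{D}_4$, each further cut at $t=r(B)$), and the constant there is uniform in $B$. The only point worth stating explicitly is the final step $M_\mu f(x)\lesssim \|f\|_{L^\infty(\mu)}\le 1$, which holds for the averaged maximal function used throughout the paper. Both proofs rest on the same underlying size/H\"older estimates; yours buys brevity and removes the omitted cases, at the cost of relying on Lemma \ref{T(f)} being stated for the full upper half-space (which it is).
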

\begin{proof}
Since $|a-b| \leq |a^2-b^2|^{1/2}$, it is enough to show
$$ |g_{\lambda,\mu}^*(f \mathbf{1}_{\Rn \setminus B(x_0,\kappa r)})(x)^2 - g_{\lambda,\mu}^*(f \mathbf{1}_{\Rn \setminus B(x_0,\kappa r)})(x_0)^2| \lesssim 1.$$
We split $\R^{n+1}_+$ into five parts:
\begin{eqnarray*}
\Theta_1 &=& \big\{(y,t);\Rn \times (0,2r] \big\}, \\
\Theta_2 &=& \big\{(y,t); |y-x| \leq t < |y-x_0|, 2r < t < \infty \big\}, \\
\Theta_3 &=& \big\{(y,t); |y-x_0| \leq t < |y-x|, 2r < t < \infty \big\}, \\
\Theta_4 &=& \big\{(y,t); \min\{|y-x_0|,|y-x| \} \geq t, 2r < t < \infty \big\},\\
\Theta_5 &=& \big\{(y,t); \max\{|y-x_0|,|y-x| \} < t, 2r < t < \infty\big\}.
\end{eqnarray*}
Making use of the similar method as for $\mathcal{A}_1$, we get
$$\int_{\Rn} |\theta_t^{\mu} (f \mathbf{1}_{\Rn \setminus B(x_0,\kappa r)})(y)|^2 \Big(\frac{t}{t+|x-y|}\Big)^{m\lambda} \frac{d\mu(y) }{t^m} \lesssim t^{2\alpha}r^{-2\alpha}, \ \text{for any} \ x \in B(x_0,r).$$
Thus, we obtain
$$\iint_{\Theta_1} |\theta_t^{\mu} (f \mathbf{1}_{\Rn \setminus B(x_0,\kappa r)})(y)|^2 \Big(\frac{t}{t+|x-y|}\Big)^{m\lambda} \frac{d\mu(y) dt}{t^{m+1}} \lesssim 1.$$
As for the remaining four terms, the proof is a routine application of the method of the proof of Lemma $\ref{good-lambda}$. We omit the details.
\end{proof}
%%%%%%%%%%%%%%%%%%%%%%%%%%%%%%%%%%%%%%%%%%%%%%%%%%%%%%%
\subsection{Verifying the first condition.}
We shall check that there exists some constant $C$, and any ball $B$, a constant $\mathcal{G}_B$, such that one has
$$ \frac{1}{\mu(\kappa B)} \int_{B} |g_{\lambda,\mu}^*(f)(x)-\mathcal{G}_B| \leq C.$$
For a given ball $B=B(x_0,r)$, we write
$ \mathcal{G}_B = g_{\lambda,\mu}^*(f \mathbf{1}_{\Rn \setminus B(x_0,\kappa r)})(x_0),$
where $\kappa$ is the same as in Lemma $\ref{key lemma}$. By means of Lemma $\ref{key lemma}$ and the $L^2(\mu)$ boundedness of $g_{\lambda,\mu}^*$, we deduce that

\begin{align*}
\frac{1}{\mu(\kappa B)} \int_{B} |g_{\lambda,\mu}^*(f)(x)-\mathcal{G}_B|
&\lesssim \frac{1}{\mu(\kappa B)} \int_{B} |g_{\lambda,\mu}^*(f)(x)- g_{\lambda,\mu}^*(f \mathbf{1}_{\Rn \setminus B(x_0,\kappa r)})(x)| d\mu(x)\\
&\quad + \frac{1}{\mu(\kappa B)} \int_{B} |g_{\lambda,\mu}^*(f \mathbf{1}_{\Rn \setminus B(x_0,\kappa r)})(x)-g_{\lambda,\mu}^*(f \mathbf{1}_{\Rn \setminus B(x_0,\kappa r)})(x_0)| d\mu(x) \\
&\lesssim \frac{1}{\mu(\kappa B)} \int_{B} |g_{\lambda,\mu}^*(f \mathbf{1}_{B(x_0,\kappa r)})(x)| d\mu(x) + 1\\
&\leq \frac{\mu(B)^{1/2}}{\mu(\kappa B)} ||f \mathbf{1}_{B(x_0,\kappa r)}||_{L^2(\mu)} + 1
\lesssim 1.
\end{align*}
%%%%%%%%%%%%%%%%%%%%%%%%%%%%%%%%%%%%%%%%%%%%%%%%%%%%%%%%%
\subsection{Verifying the second condition.}
Next, let us verify the estimate
\begin{equation}\label{G-B}
|\mathcal{G}_{B'} - \mathcal{G}_B| \lesssim 1 + \int_{\kappa B' \setminus B} \frac{d\mu(z)}{|z-x_B|^m},
\end{equation}
whenever $B \subset B'$ are two balls in $\Rn$. Let $x_B$ be the center of $B$.

It follows from Lemma $\ref{key lemma}$ that
\begin{align*}
|\mathcal{G}_{B'} - \mathcal{G}_B|
&\leq |g_{\lambda,\mu}^*(f \mathbf{1}_{\Rn \setminus \kappa B'})(x_{B'})-g_{\lambda,\mu}^*(f \mathbf{1}_{\Rn \setminus \kappa B'})(x_B)| \\
&\quad + |g_{\lambda,\mu}^*(f \mathbf{1}_{\Rn \setminus \kappa B'})(x_{B})-g_{\lambda,\mu}^*(f \mathbf{1}_{\Rn \setminus \kappa B})(x_B)| \\
&\lesssim 1 + |g_{\lambda,\mu}^*(f \mathbf{1}_{\Rn \setminus \kappa B'})(x_{B})-g_{\lambda,\mu}^*(f \mathbf{1}_{\Rn \setminus \kappa B})(x_B)|.
\end{align*}
We choose an increasing sequence of balls $B_0 \subset B_1 \subset \ldots \subset B_K$ satisfying
\begin{enumerate}
\item [(1)] $B_0 = B, B_K = B'$;
\item [(2)] $r(B_j) \simeq r(B_{j+1})$ and $|z-x_B| \simeq r(B_j)$ for $z \in \kappa B_{j+1} \subset \kappa B_j$;
\item [(3)] If $y \in 3B_j$ and $z \in \Rn \setminus \kappa B_j$, then $|y-z| \simeq |z-x_B|$.
\end{enumerate}
Thus, we have
\begin{align*}
&|g_{\lambda,\mu}^*(f \mathbf{1}_{\Rn \setminus \kappa B'})(x_{B})-g_{\lambda,\mu}^*(f \mathbf{1}_{\Rn \setminus \kappa B})(x_B)| \\
&\leq \sum_{j=0}^{K-1} |g_{\lambda,\mu}^*(f \mathbf{1}_{\Rn \setminus \kappa B_{j+1}})(x_{B})-g_{\lambda,\mu}^*(f \mathbf{1}_{\Rn \setminus \kappa B_j})(x_B)| \\
&\leq \sum_{j=0}^{K-1} g_{\lambda,\mu}^*(f \mathbf{1}_{\kappa B_{j+1} \setminus \kappa B_j})(x_{B})
\end{align*}
Consequently, the inequality $(\ref{G-B})$ will be yielded once one obtains
\begin{equation}\label{k-Bk+1}
g_{\lambda,\mu}^*(f \mathbf{1}_{\kappa B_{k+1}\setminus \kappa B_k})(x_0) \lesssim \int_{\kappa B_{k+1}\setminus \kappa B_k}\frac{d\mu(z)}{|y-x_B|^m}.
\end{equation}
We dominate
$$g_{\lambda,\mu}^*(f \mathbf{1}_{\kappa B_{k+1}\setminus \kappa B_k})(x_0) \leq \Lambda_1 + \Lambda_2,$$
where
$$ \Lambda_1 = \bigg(\int_{0}^{r(B_k)}\int_{\Rn} \Big(\frac{t}{t+|x_B-y|}\Big)^{m\lambda}
|\theta_t^{\mu} (f \mathbf{1}_{\kappa B_{k+1}\setminus \kappa B_k})(y)|^2 \frac{d\mu(y) dt}{t^{m+1}}\bigg)^{1/2} $$
and
$$ \Lambda_2 = \bigg(\int_{r(B_k)}^{\infty}\int_{\Rn} \Big(\frac{t}{t+|x_B-y|}\Big)^{m\lambda}
|\theta_t^{\mu} (f \mathbf{1}_{\kappa B_{k+1}\setminus \kappa B_k})(y)|^2 \frac{d\mu(y) dt}{t^{m+1}}\bigg)^{1/2}.$$
Proceeding as we did for $\mathcal{A}_1$, we get
$$ \bigg(\int_{\Rn} \Big(\frac{t}{t+|x_B-y|}\Big)^{m\lambda}
|\theta_t^{\mu} (f \mathbf{1}_{\kappa B_{k+1}\setminus \kappa B_k})(y)|^2 \frac{d\mu(y)}{t^m}\bigg)^{1/2}
\lesssim t^\alpha \int_{\kappa B_{k+1}\setminus \kappa B_k}\frac{d\mu(z)}{|z-x_B|^{m+\alpha}}.$$
Thus, this implies that
\begin{align*}
\Lambda_1 &\lesssim \bigg(\int_{0}^{r(B_k)} t^{2 \alpha - 1} dt\bigg)^{1/2} \int_{\kappa B_{k+1}\setminus \kappa B_k}\frac{d\mu(z)}{|z-x_B|^{m+\alpha}}\\
&\simeq \int_{\kappa B_{k+1}\setminus \kappa B_k}\frac{r(B_k)^\alpha d\mu(z)}{|z-x_B|^{m+\alpha}}
\simeq \int_{\kappa B_{k+1}\setminus \kappa B_k}\frac{d\mu(z)}{|z-x_B|^m}.
\end{align*}
Finally, we bound $I_2$ as follows. The size condition gives that
$$ |\theta_t^{\mu} (f \mathbf{1}_{\kappa B_{k+1}\setminus \kappa B_k})(y)| \lesssim t^{-m} \mu(\kappa B_{k+1}\setminus \kappa B_k).$$
Therefore,
\begin{align*}
\Lambda_2 &\lesssim \mu(\kappa B_{k+1}\setminus \kappa B_k) \bigg(\int_{r(B_k)}^{\infty} \frac{1}{t^{2m+1}}
\int_{\Rn} \Big(\frac{t}{t+|x_B-y|}\Big)^{m\lambda} \frac{d\mu(y)}{t^m} dt\bigg)^{1/2} \\
&\lesssim \frac{\mu(\kappa B_{k+1}\setminus \kappa B_k)}{r(B_k)^m}
\simeq \int_{\kappa B_{k+1}\setminus \kappa B_k}\frac{d\mu(z)}{|z-x_B|^m}.
\end{align*}
This shows $(\ref{k-Bk+1})$.
\qed
%%%%%%%%%%%%%%%%%%%%%%%%%%%%%%%%%%%%%%%%%%%%%%%%%%%%%%%%%%%%%%%%%%%%%%%%%%%%%
%%%%%%%%%%%%%%%%%%%%%%%%%%%%%%%%%%%%%%%%%%%%%%%%%%%%%%%%%%%%%%%%%%%%%%%%%%%%%

\end{document}